\newcommand{\mfb}{\mathfrak{b}}
\newcommand{\mcalB}{{\mathcal{B}}}
\newcommand{\mcalH}{{\mathcal{H}}}
\newcommand{\mfT}{{\mathfrak{T}}}
\newcommand{\Si}{{\Sigma}}
\newcommand{\eqdef}{\stackrel{\scriptscriptstyle\rm def}{=}}
\definecolor{Red}{cmyk}{0,1,1,0}
\definecolor{verde}{cmyk}{1,0,1,0}
\definecolor{azul}{cmyk}{1,1,0,0}
\begin{document}

\title[Non-hyperbolic Iterated Function Systems]{Non-hyperbolic Iterated Function Systems:\\ 
attractors and stationary measures}

\author[E. Matias]{Edgar Matias}
\email{edgar@mat.puc-rio.br}
\author[L. J.~D\'\i az]{Lorenzo J. D\'\i az}
\email{lodiaz@mat.puc-rio.br}
\address{Departamento de Matem\'atica PUC-Rio, Marqu\^es de S\~ao Vicente 225, G\'avea, Rio de Janeiro 22451-900, Brazil}

\begin{abstract}
We consider iterated function systems $\mathrm{IFS}(T_1,\dots,T_k)$ consisting of continuous self maps of a compact metric space $X$.
We introduce the subset $S_{\mathrm{t}}$ of 
{\emph{weakly hyperbolic sequences}}
$\xi=\xi_0\ldots\xi_n \ldots \in \Sigma_k^+$ having the property that 
$\bigcap_n T_{\xi_{0}}\circ\cdots\circ T_{\xi_{n}}(X)$ is a point $\{\pi(\xi)\}$.
The  {\emph{target set}}   $\pi(S_{\mathrm{t}})$  
 plays a role  similar to the semifractal introduced by
 Lasota-Myjak. 
 
Assuming that $S_{\mathrm{t}}\ne \emptyset$ (the only hyperbolic-like condition we assume) we prove that the IFS has  at most one strict attractor and we state a sufficient condition guaranteeing 
that the strict attractor is the closure of the target set. 
Our approach applies to a large class of genuinely non-hyperbolic IFSs (e.g. with maps with expanding fixed points) and provides a necessary and sufficient condition for the existence of a globally attracting fixed point of the Barnsley-Hutchinson operator.
We provide sufficient conditions under which the disjunctive chaos game yields the target set (even when it
 is not a strict attractor).
 
 We  state a sufficient condition for the asymptotic stability 
 of the Markov operator of a recurrent IFS. 
 For IFSs defined on $[0,1]$ we
 give a simple condition for their
  asymptotic stability.  In the 
 particular case of IFSs with probabilities satisfying 
 a  ``locally injectivity'' condition,
 we prove that 
 if the target set has at least two elements 
 then the  Markov operator is 
 asymptotically stable and
 its stationary measure is supported 
  in the closure 
 of the target set.
 \end{abstract}

\begin{thanks}{This paper is part of the PhD thesis of EM (PUC-Rio) supported by CAPES.
EM thanks the hospitality of Centro de Matem\'atica of Univ. of Porto for its hospitality and the
partial support of 
EU Marie-Curie IRSES ``Brazilian-European partnership in Dynamical
Systems" (FP7-PEOPLE-2012-IRSES 318999 BREUDS). 
LJD is partially supported by CNPq and CNE-Faperj.
The authors warmly thank P. Barrientos and K. Gelfert for their useful comments on this paper.  
}\end{thanks}
\keywords{asymptotic stability, Barnsley-Hutchinson operator, chaos game,
Conley and strict attractors, iterated function system, Markov operators,
stationary measures, target set}
\subjclass[2000]{%
37B25, 
37B35, 60J05,  47B80. 
}

\date{}
\newtheorem{mteo}{Theorem}
\newtheorem{mcoro}{Corollary}
\newtheorem{mprop}{Proposition}

\newtheorem{teo}{Theorem}[section]
\newtheorem{prop}[teo]{Proposition}
\newtheorem{lema}[teo]{Lemma}
\newtheorem{schol}[teo]{Scholium}
\newtheorem{coro}[teo]{Corollary}
\newtheorem{defi}[teo]{Definition}
\newtheorem{example}[teo]{Example}
\newtheorem{remark}[teo]{Remark}
\newtheorem{nota}[teo]{Notation}
\newtheorem{claim}[teo]{Claim}
\newtheorem{fact}[teo]{Fact}

\numberwithin{equation}{section}

\maketitle




\section{Introduction}


In this paper we study iterated function systems (IFSs) associated to continuous self-maps 
$T_1, \dots, T_k$, $k\ge 2$,
defined on a compact
metric space $(X,d)$ (denoted by $\mathrm{IFS}(T_1,\dots,T_k)$).
In his fundamental paper \cite{Hu}, Hutchinson considered hyperbolic (uniformly contracting) IFSs and proved 
 the existence and uniqueness  of global  attractors and stationary measures for such IFSs. The aim of
this paper is to obtain similar results for genuinely non-hyperbolic IFSs having
contracting and expanding regions as well as contracting and expanding fixed points.

A key ingredient in this study is the
so-called
 {\emph{Barnsley-Hutchinson operator}}
 of an IFS  $\mathfrak{F}=\mathrm{IFS}(T_{1},\dots T_{k})$
 that associates to each subset $A$ of $X$ the set
\begin{equation}\label{e.BH}
\mathcal{B}_{\mathfrak{F}}(A)\eqdef  \bigcup_{i=1}^{k} T_{i}(A).
\end{equation}
This operator acts continuously in the space of non-empty compact subsets 
of $X$ endowed with the Hausdorff 
metric.
In the hyperbolic setting (all maps $T_i$ are uniform contractions) the operator 
$\mathcal{B}_{\mathfrak{F}}$
 has a unique  global attractor: there exists a compact set $A_\mathfrak{F}$,
 called the
{\emph{attractor of the IFS,}}
such that
 $$
\lim_{n\to\infty} \mathcal{B}_{\mathfrak{F}}^{n}(K)= A_{\mathfrak{F}}
 \quad
 \mbox{for every  compact set $K\subset X, \, K\ne\emptyset$,}
 $$
see  \cite{Hu}.  
Edalat~\cite{Ed}  extended 
this result to \emph{weakly hyperbolic IFSs}, that is, IFSs satisfying  the following
``reverse'' contracting condition
\begin{equation}\label{e.edalat}
\mathrm{diam}\,\big( T_{\xi_{0}}\circ\cdots\circ T_{\xi_{n}}(X)\big)\to 0
\quad \mbox{for every }\,\, \xi=\xi_0\xi_1\xi_2\dots \in\Sigma_k^+,
\end{equation}
where  $\Sigma_k^+\eqdef\{1,\dots,k\}^\mathbb{N}$.

In this paper we will study a more general setting than the above one, considering genuinely non-hyperbolic IFSs. 
One of our  goals is to describe the global and local ``attractors'' of $\mathcal{B}_{\mathfrak{F}}$.
More precisely, we will consider so-called  {\emph{strict}} and \emph{Conley attractors}.
A compact set  $A\subset X$ is a  \emph{strict attractor}  
of the IFS ${\mathfrak{F}}$
 if there is an open neighbourhood
 $U$ of $A$ such that 
 $$
\lim_{n\to\infty} \mathcal{B}_{\mathfrak{F}}^{n}(K)= A
 \quad
 \mbox{for every  compact set $K\subset U$.}
 $$
The \emph{basin of attraction} of $A$ is the largest open neighbourhood of $A$ for which the above  property holds. A strict attractor whose basin of attraction is the whole space is  a 
{\emph{global attractor.}}
  A compact set $S\subset X$ is a \emph{Conley attractor} of  the IFS ${\mathfrak{F}}$
   if there exists  an open neighbourhood $U$ of $S$ such that
 $$
 \lim_{n\to \infty} \mathcal{B}_{\mathfrak{F}}^{n}(\overline{U})=S.
 $$
  The continuity of the Barnsley-Hutchinson operator  $\mathcal{B}_{\mathfrak{F}}$
  implies that Conley  
  and strict attractors both are fixed points of  $\mathcal{B}_{\mathfrak{F}}$.
Note also that strict attractors are Conley attractors but the converse is not true in general.
Finally, we say that the IFS $\mathfrak{F}$ is 
\emph{asymptotically stable}  if there is a (unique) global attractor.

%

The above mentioned results in \cite{Hu,Ed} require some sort of global contraction (hyperbolicity) of the IFS. 
Having in mind the definition of weakly hyperbolicity in \eqref{e.edalat}, we introduce 
 the subset $S_{\mathrm{t}}\subset\Sigma_k^+$ of
 {\emph{weakly hyperbolic sequences}} defined by
\begin{equation}\label{e.stweakly}
S_{\mathrm{t}}\eqdef\big\{\xi\in \Sigma^{+}_k\colon 
 \displaystyle\lim_{n\to \infty}\mathrm{diam}\big(T_{\xi_{0}}\circ\cdots\circ T_{\xi_{n}}(X)\big)=0\big\}.
\end{equation}
Note that for a weakly hyperbolic IFS one has $S_{\mathrm{t}}=\Sigma^{+}_k$. If $S_{\mathrm{t}}\ne\Sigma^{+}_k$ we will call the IFS \emph{non-weakly hyperbolic}.
We say that an IFS 
 {\emph{has a weakly hyperbolic sequence}} if $S_{\mathrm{t}}\ne \emptyset$. 
 When $S_{\mathrm{t}}\ne \emptyset$ then it contains a residual subset of 
 $\Sigma_k^+$\footnote{This follows using genericity standard arguments, see for instance the construction
 in \cite[Proposition 3.15]{DiGe}}.
 We replace the
  condition {\emph{every}} sequence is weakly hyperbolic by the condition 
 {\emph{there is at least one}} weakly hyperbolic sequence.
The goal of this paper is to recover results in 
 the spirit of \cite{Hu,Ed} in such a setting.
 
 \medskip
 
We briefly sketch our main results and philosophy of our approach, postponing the precise statements. As a general principle, rephrasing  Pugh-Shub principle \cite{PuSh}, we show that ``a little hyperbolicity goes a long way  guaranteeing stability-like properties''.
Here by a ``little hyperbolicity''  we understand either the almost-sure existence of weakly hyperbolic sequences or the existence of at least one, according to the case.
First, assuming that the set $S_{\mathrm{t}}$ has ``probability one'', we prove that the Markov operator is asymptotically stable (here we consider Markov measures associated to transition matrices and the particular case of Bernoulli probabilities). Second, 
we prove that if the Barnsley-Hutchinson operator has a unique fixed point then the IFS is asymptotically
stable.
 Finally,  in the case when $X$ is an interval, to establish the stability of the Markov operator we show that it is enough to assume that there are no common fixed points for the maps of the IFS and that there exists at least one weakly hyperbolic sequence.

 \medskip

 We now provide  more details for our main results (for the precise definitions and statements
see Section~\ref{s.precisestatent}).
 Associated to the set $S_{\mathrm{t}}$ of weakly hyperbolic sequences we consider
the \emph{coding map} $\pi\colon S_{\mathrm{t}}\to X$ that projects $S_{\mathrm{t}}$ into the phase space $X$,
see equation~\eqref{e.pi}.
The set $A_{\mathrm{t}}\eqdef\pi(S_{\mathrm{t}})$ is called the \emph{target} set and contains relevant dynamical information of the IFS.
Assuming that $S_{\mathrm{t}}\ne\emptyset$,  we prove the following results:
\begin{itemize}
\item The closure of the target set $\overline{A_{\mathrm{t}}}$ 
is a Conley attractor if and only if  it is a strict attractor (Theorem~\ref{mt.local}).
\item
The set $\overline{A_{\mathrm{t}}}$
is the global maximal fixed point of the IFS if and only if the
IFS is asymptotically stable. 
Moreover, the
Barnsley-Hutchinson operator has a unique fixed point if and only if
it is  asymptotically stable (Theorem~\ref{mt.att}).
\end{itemize}

We will investigate more closely  the  relation between 
target sets and  semifractals introduced in \cite{Lasota}.
An IFS $\mathfrak{F}=\mathrm{IFS}(T_{1},\dots, T_{k})$ 
 is said to be \emph{regular} if there
 are numbers $1\le i_{1}<i_{2}<\dots< i_{\ell} \leq k$ such that 
 $\mathfrak{F}'=\mathrm{IFS}(T_{i_{1}},\dots,T_{i_{\ell}})$ is asymptotically
stable. The global attractor of $\mathfrak{F}'$ is called a 
{\emph{nucleus}} of $\mathfrak{F}$ (an IFS may have several
nuclei).
By \cite{Lasota} for  any regular IFS $\mathfrak{F}$
there exists 
{\emph{the}} minimum fixed point of $\mathfrak{F}$, called its \emph{semifractal} and denoted
by $\mathrm{Semi}(\mathfrak{F})$.
It is obtained  from any nucleus of $\mathfrak{F}$ and 
attracts every compact set inside it, where iterations are taken 
with respect the Barnsley-Hutchinson operator of $\mathfrak{F}$. 
On the other hand, when $S_{\mathrm{t}}\ne \emptyset$,
the set $\overline{A_{\mathrm{t}}}$ is a minimum fixed point that
attracts every compact set inside it.
This provides the following characterisation of semifractals:
\begin{itemize}
\item
If an IFS $\mathfrak{F}$ is
regular
and satisfies $S_{\mathrm{t}}\neq \emptyset$ then
$\mathrm{Semi}(\mathfrak{F})=\overline{A_{\mathrm{t}}}$. 
\end{itemize} 
For a non-regular IFS with $S_{\mathrm{t}}\neq \emptyset$ (see 
 Example~\ref{ex.nonregular}) the
set $\overline{A_{\mathrm{t}}}$  plays the same role as a  semifractal plays for a regular IFS.
We refer to Remark~\ref{r.putting}
to support this assertion.

\medskip

We will also study the consequence of our approach for the so-called chaos game.
The {\emph{chaos game}} is an algorithm for generating fractals using 
random iterations of an IFS, see \cite{Ba}. It has probabilistic and 
disjunctive
(deterministic) versions, see \cite{Ba, Barnsleychaos,Pablo,Lesniak}.
Given an initial point $x=x_0\in X$, one considers the orbit 
$x_{n+1}= T_{\xi_n} (x_n)$, where 
the sequence  $\xi\in \Sigma_k^+$ is chosen according to
 some probability (\emph{probabilistic game})
or is a disjunctive sequence
(\emph{disjunctive game}). Recall that  $\xi\in \Sigma_k^+$ is {\emph{disjunctive}} if its  orbit
 (with respect to the usual left shift $\sigma$ defined by $\sigma(\xi)_n=\xi_{n+1}$) 
is dense in $\Sigma_k^+$.
The \emph{chaos game holds} when the sequence of \emph{tails}
$( \{ x_n \colon n\ge \ell \})_\ell$ in the Hausdorff distance
 converges to some attracting ``fractal'' (in such a case we also say that
 \emph{chaos game yields the fractal}). 
 
A natural question is how typically this game holds, where the term typical either  refers  to
sequences in $\Sigma_k^+$ or points in the phase space $X$.
By~\cite{Barnsleychaos}, the probabilistic chaos game holds 
when the fractal is a
  strict attractor and the initial point is in its basin of attraction. 
By~\cite{Pablo}, the disjunctive chaos game holds for a special class of 
 attractors\footnote{Called \emph{well-fibered} attractors, see also the strongly fibered case in \cite{Lesniak}.}
and every point in the pointwise basin of attraction.


In the context of the chaos game,
\cite{Lasota} considers IFSs
whose maps are Lipschitz with constants 
less than or equal to
$1$  and have
at least one uniformly contracting map. It is proved
that the probabilistic chaos game 
starting at
any point of the phase space 
yields the semifractal (even if the semifractal is not an attractor).
In our setting,
we get a similar result for the disjunctive chaos 
game where the fractal is the closure of the target set.

A fixed point $A$ of the Barnsley-Hutchinson operator
is  \emph{stable} if for every open neighbourhood $V$ of $A$ there
is an open  neighbourhood $V_{0}$ of $A$ such that 
\begin{equation}
\label{e.stableBH}
\mathcal{B}^{n}(V_{0})\subset V \quad \mbox{for every}\quad n\geq 0.
\end{equation}
For instance,
the set $\overline{A_{\mathrm{t}}}$ is stable when it is a Conley attractor 
or when 
all the  maps of the IFS  are Lipschitz with  constants less than or equal to $1$
(the existence of a contracting map is not required).
See Section \ref{ss.conleyandstrict} for 
an example where $\overline{A_{\mathrm{t}}}$ is stable 
but is not a Conley attractor.
\begin{itemize}
\item 
When  $\overline{A_{\mathrm{t}}}$ is a stable fixed point the 
{\emph{disjunctive chaos game}} holds for 
every point in the phase space (Theorem~\ref{mt.jogodocaos}).
\end{itemize}

%

\medskip

Finally we  consider IFSs from the ergodic point of view, studying the existence and uniqueness of stationary measures.
Recall that given an space of finite measures $\mathfrak{M}(X)$ defined on a set $X$, an operator
$\mathfrak{T}\colon  \mathfrak{M}(X)\to  \mathfrak{M}(X)$ 
such that
\begin{itemize}
\item
 $\mathfrak{T}$  is linear and
 \item
$\mathfrak{T}\nu (X)=\nu (X)$ for every $\nu \in  \mathfrak{M}(X)$
\end{itemize}
is called a {\emph{Markov operator.}}  A {\emph{stationary measure of $\mathfrak{T}$}}
is a fixed point of $\mathfrak{T}$. The operator  $\mathfrak{T}$ is
{\emph{asymptotically stable}} if it has a stationary measure $\nu$ 
such that $\lim \mathfrak{T}^n \mu = \nu$ for every $\mu \in \mathfrak{M}(X)$, in the weak$\ast$ topology.
The ergodic study of IFSs deals with two main   settings:
\begin{itemize}
\item 
{\emph{IFSs with probabilities}} given by a Bernoulli probability $\mathfrak{b}$ that  assigns
(positive)  
weights to  each map; 
\item
{\emph{Recurrent IFSs}} associated to an irreducible transition matrix $P$ inducing a Markov probability $\mathbb{P}^+$.
\end{itemize}
From the ergodic viewpoint one studies the iterations of points by an IFS (random orbits) 
as a Markov process and
each type of IFS has associated a special type of Markov operator
(associated to Bernoulli probabilities and associated to
transition matrices).
For a discussion see \cite{BarElton,RecurrentBar}.

When $S_{\mathrm{t}}\ne\emptyset$ and $X=[0,1]$
our ergodic results are summarised as follows:
\begin{itemize}
\item
Every injective IFS with Bernoulli probability  $\mathfrak{b}$
whose target set ${A_{\mathrm{t}}}$
is not a \emph{singleton} (i.e., 
 has at least two points)
is asymptotically stable and its 
unique stationary measure is
$\pi_{*}\mathfrak{b}$  
and satisfies
 $\mathrm{supp}(\pi_{*}\mathfrak{b})=\overline{A_{\mathrm{t}}}$. In this case, 
$ \overline{A_{\mathrm{t}}}$ is uncountable and the stationary measure is continuous (Theorem~\ref{mt.eimportante}). We will see that condition $\# (A_{\mathrm{t}})\ge 2$ 
($\#(A)$ means the cardinality of the set $A$)
implies that $\mfb (S_{\mathrm{t}})=1$.  For IFSs with $S_{\mathrm{t}}\ne \emptyset$ we see that if the Markov operator
associated to a Bernoulli probability $\mfb$ is asymptotically stable then the support of its stationary 
measure is $\overline{A_{\mathrm{t}}}$, even when $\mfb (S_{\mathrm{t}})=0$,
see Proposition~\ref{p.ultimosdias} (this proposition does not require $X=[0,1]$).
 \item
 An injective  recurrent IFS with a {\emph{splitting Markov measure}}\footnote{This is an ergodic version of the
 condition ``the set $A_{\mathrm{t}}$ is not a singleton'' and  means that there is 
 $i$ such that the restriction of $\pi$ to $[i]\cap \mathrm{supp}(\mathbb{P}^+)$
 is not constant.}
 $\mathbb{P}^+$ satisfies
$\mathbb{P}^{+}(S_{\mathrm{t}})=1$ (Theorem~\ref{mt.abundance}). We also get 
sufficient conditions for the asymptotically stability of a recurrent IFS and characterise its
unique stationary measure (Theorem~\ref{mt.generalizedB}).
\end{itemize}

\medskip

This paper is organised as follows.
In Section~\ref{s.precisestatent} we state  the main definitions and the precise statements of our results.
Section~\ref{s.attractors} is devoted to the study of different types of  attractors of IFSs and to the proofs of
Theorems~\ref{mt.local}, \ref{mt.att}, and 
\ref{mt.jogodocaos}. In Section~\ref{s.asstableifs01},
we consider
IFSs 
defined on the interval $[0,1]$, study the measure of   $S_{\mathrm{t}}$ for Markov measures,
and prove 
Theorem~\ref{mt.abundance}. 
We also get results about probabilistic rigidity of  $S_{\mathrm{t}}$
(Theorem~\ref{t.rigidity}) and characterise separable IFSs (Theorem~\ref{t.separableee}).
In Section~\ref{s.stationary} we prove Theorems~\ref{mt.eimportante} and 
\ref{mt.generalizedB} about stability of the Markov operator.
Finally, in Section~\ref{s.examples} we present some examples.

\section{Precise statement of results}
\label{s.precisestatent}

 \subsection{Topological properties of IFSs}\label{ss.ifstopological}
Consider the set 
 $S_{\mathrm{t}}$ of weakly hyperbolic sequences  
 in \eqref{e.stweakly}
 and define  the {\emph{coding map}}\footnote{This is the standard terminology for the map $\pi$ when
$S_{\mathrm{t}}=\Sigma_k^+$.}
\begin{equation}\label{e.pi}
\pi\colon S_\mathrm{t} \to X 
\quad\text{ by }\quad
\pi(\xi)\eqdef\lim_{n\to \infty}T_{\xi_{0}}\circ\cdots\circ T_{\xi_{n}}(p),
\end{equation}
where $p$ is any point of $X$.
By definition of the set $S_{\mathrm{t}}$, this limit always exists 
and is independent of $p\in X$. We introduce the {\emph{target set}} 
$A_{\mathrm{t}}\eqdef 
 \pi(S_{\mathrm{t}})$. This name is justified by the following characterisation
\begin{equation}\label{e.characterizationAt}
 A_{\mathrm{t}}=\{x\in X \colon \mbox{there is $\xi \in \Sigma_k^+$ with $\{x\}=\bigcap_n 
 T_{\xi_0}\circ\dots\circ T_{\xi_n}(X)$}\},
\end{equation}
see \eqref{e.At}.  
  The target set  plays a key role in the study of strict attractors.
We prove that if
$S_{\mathrm{t}}\neq \emptyset$ then the IFS has at 
most one strict attractor. Moreover, if such a strict 
attractor exists then 
it is equal to $\overline{A_{\mathrm{t}}}$, see Proposition~\ref{p.katia}.

 \begin{mteo}\label{mt.local}
 Consider an $\mathrm{IFS}$ defined on a compact metric space such that 
 ${S_{\mathrm{t}}}\ne \emptyset$.
 Then
  $\overline{A_{\mathrm{t}}}$ is a 
 Conley attractor if and only if it is a strict attractor. 
 \end{mteo}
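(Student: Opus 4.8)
The plan is to prove the two implications separately, the forward one being routine and the reverse one carrying all the content. \emph{Strict implies Conley:} suppose $\overline{A_{\mathrm{t}}}$ is a strict attractor with an open neighbourhood $U$ as in the definition. Since $\overline{A_{\mathrm{t}}}$ is compact and $U$ is open, I would pick an open $V$ with $\overline{A_{\mathrm{t}}}\subset V\subset \overline{V}\subset U$ and $\overline{V}$ compact; then $\overline{V}$ is a nonempty compact subset of $U$, so the strict-attractor property gives $\mathcal{B}_{\mathfrak{F}}^{n}(\overline{V})\to \overline{A_{\mathrm{t}}}$, which is exactly the Conley condition for the neighbourhood $V$.

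\emph{Conley implies strict:} let $U$ be the neighbourhood with $\mathcal{B}_{\mathfrak{F}}^{n}(\overline{U})\to \overline{A_{\mathrm{t}}}$. I claim $U$ itself witnesses the strict-attractor property, i.e. $\mathcal{B}_{\mathfrak{F}}^{n}(K)\to \overline{A_{\mathrm{t}}}$ for every nonempty compact $K\subset U$, and I would split the Hausdorff convergence into its two one-sided parts, treating them by different mechanisms. For the \emph{upper} part $\sup_{x\in \mathcal{B}_{\mathfrak{F}}^{n}(K)}d(x,\overline{A_{\mathrm{t}}})\to 0$, I use that $\mathcal{B}_{\mathfrak{F}}$ is monotone under inclusion, so $K\subset \overline{U}$ yields $\mathcal{B}_{\mathfrak{F}}^{n}(K)\subset \mathcal{B}_{\mathfrak{F}}^{n}(\overline{U})$; since the larger set converges to $\overline{A_{\mathrm{t}}}$, its upper one-sided distance already tends to $0$ and dominates that of $\mathcal{B}_{\mathfrak{F}}^{n}(K)$. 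This is precisely where the Conley hypothesis (together with the restriction $K\subset U$) is indispensable.

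For the \emph{lower} part $\sup_{a\in \overline{A_{\mathrm{t}}}}d(a,\mathcal{B}_{\mathfrak{F}}^{n}(K))\to 0$, I would exploit the definition of the target set. Fixing $x_{0}\in K$, for each $a\in A_{\mathrm{t}}$ choose $\xi\in S_{\mathrm{t}}$ with $\pi(\xi)=a$; then $T_{\xi_{0}}\circ\cdots\circ T_{\xi_{n}}(x_{0})$ lies in $\mathcal{B}_{\mathfrak{F}}^{n+1}(K)$ and converges to $a$, so the functions $f_{n}(a)\eqdef d(a,\mathcal{B}_{\mathfrak{F}}^{n}(K))$ satisfy $f_{n}(a)\to 0$ for every $a\in A_{\mathrm{t}}$. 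Each $f_{n}$ is $1$-Lipschitz and $A_{\mathrm{t}}$ is dense in $\overline{A_{\mathrm{t}}}$, so pointwise convergence propagates to all of $\overline{A_{\mathrm{t}}}$; equicontinuity together with the compactness of $\overline{A_{\mathrm{t}}}$ then upgrades this to the uniform statement $\sup_{a}f_{n}(a)\to 0$ by a standard finite-cover argument. Combining the two parts gives $\mathcal{B}_{\mathfrak{F}}^{n}(K)\to \overline{A_{\mathrm{t}}}$, as required.

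The step that looks like the main obstacle is the \emph{uniformity} of the lower bound, because the sets $\mathcal{B}_{\mathfrak{F}}^{n}(K)$ are not nested and the rate at which a given $a$ is reached a priori depends on $a$ through its coding sequence. The point I would emphasise is that this obstacle dissolves: the distance functions $f_{n}$ are automatically equicontinuous, so pointwise convergence on the compact set $\overline{A_{\mathrm{t}}}$ is already uniform. Consequently the only genuine use of hyperbolicity sits in the upper estimate via the Conley hypothesis, which also explains why the equivalence is special to $\overline{A_{\mathrm{t}}}$ (cf. Proposition~\ref{p.katia}) and fails for Conley attractors strictly larger than the closure of the target set.
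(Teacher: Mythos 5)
Your proof is correct, and the decomposition into the two one-sided Hausdorff estimates, with the upper one handled by monotonicity of $\mathcal{B}_{\mathfrak{F}}$ under inclusion and the Conley hypothesis, is exactly the paper's. The genuine difference is in the lower estimate, which the paper isolates as Lemma~\ref{l.atracaofatal} ($h_{s}(\overline{A_{\mathrm{t}}},\mathcal{B}^{n}(K))\to 0$ for \emph{every} $K\in\mathcal{H}(X)$) and proves by contradiction: it extracts a convergent subsequence $\mathcal{B}^{n_\ell}(K)\to\widehat K$ in the compact hyperspace $(\mathcal{H}(X),d_{H})$, passes the bad distance to the limit via the auxiliary Lemma~\ref{l.era14}, and then uses a single coding sequence $\omega\in S_{\mathrm{t}}$ near the limit point to force $\mathcal{B}^{n_\ell}(K)$ into a small ball, contradicting $d(p^{*},\widehat K)\ge\epsilon$. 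You instead argue directly: the functions $f_{n}(a)=d(a,\mathcal{B}^{n}(K))$ are $1$-Lipschitz, tend to $0$ pointwise on $A_{\mathrm{t}}$ by the coding-sequence computation, hence pointwise on $\overline{A_{\mathrm{t}}}$ by density, and hence uniformly by the finite-cover argument on the compact set $\overline{A_{\mathrm{t}}}$. Your route is more elementary and arguably cleaner --- it avoids sequential compactness of the hyperspace and the limit-interchange lemma entirely, and it makes transparent that no nestedness of the sets $\mathcal{B}^{n}(K)$ is needed; the paper's route has the mild advantage of producing the limit object $\widehat K$ explicitly, but that object plays no further role. Both proofs use the hypothesis $S_{\mathrm{t}}\ne\emptyset$ only through the existence of coding sequences realizing points of $A_{\mathrm{t}}$, and both confine the Conley hypothesis to the upper estimate, as you note.
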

%

In  \cite{Bar} Barnsley and Vince 
consider IFSs consisting either of affine maps or of M\"obius maps
and
introduce
sufficient conditions 
that guarantee the existence of a unique strict attractor.
 The proof involves some type of local  hyperbolicity 
 in a neighbourhood of a Conley attractor, see \cite{Atkins,Vince}.
  We point out that  Theorem~\ref{mt.local} only requires the existence of \emph{at least one} weakly hyperbolic sequence.


\medskip

Given an IFS $\mathfrak{F}$ and its
Barnsley-Hutchinson operator $\mathcal{B}_{\mathfrak{F}}$,
a subset $Y\subset X$ is \emph{$\mathcal{B}_{\mathfrak{F}}$-invariant} if $\mathcal{B}_{\mathfrak{F}}(Y)\subset Y$.  The closure of any $\mathcal{B}_{\mathfrak{F}}$-invariant set  
contains some fixed point of $\mathcal{B}_{\mathfrak{F}}$ (see the discussion below). Therefore,
since $X$ is $\mathcal{B}_{\mathfrak{F}}$-invariant, the operator $\mathcal{B}_{\mathfrak{F}}$ always 
has at least one fixed point. Indeed, we have a more precise description of the 
fixed points of $\mathcal{B}_{\mathfrak{F}}$. Following \cite{Ed},
given $Y\subset  X$ define  the set
\begin{equation}
\label{e.conjestrela}
Y^* \eqdef
\bigcap_{n\geq 0} \mathcal{B}_{\mathfrak{F}}^{n}(Y).
\end{equation}
If $Y$ is $\mathcal{B}_{\mathfrak{F}}$-invariant then
the set $(\overline{Y})^*$
is the {\emph{ global maximal fixed point of the restriction of $\mathcal{B}_{\mathfrak{F}}$ 
(or of the IFS) to 
the subsets of $\overline{Y}$}},  see Proposition~\ref{p.existence}.
The next theorem generalizes \cite{Ed} in two ways:  
it applies also to IFSs which are not weakly hyperbolic  and it provides a
necessary and sufficient condition for the existence of a global attractor.

\begin{mteo}\label{mt.att}
Consider an IFS ${\mathfrak{F}}$ defined on a compact metric space $X$ such that
$S_{\mathrm{t}}\ne \emptyset$.
Then the following three assertions are equivalent: 
\begin{enumerate}
 \item 
$\overline{A_\mathrm{t}}=X^*$,
 \item
the Barnsley-Hutchinson operator $\mathcal{B}_{\mathfrak{F}}$ has a unique fixed point,
\item
$X^{*}$ is a global attractor of the IFS ${\mathfrak{F}}$.
\end{enumerate}
\end{mteo}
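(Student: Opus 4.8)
The plan is to prove the cycle of implications $(1)\Rightarrow(3)\Rightarrow(2)\Rightarrow(1)$, relying on the structural results already announced: that $\overline{A_\mathrm{t}}$ is the unique candidate for a strict attractor (Proposition~\ref{p.katia}), that $X^*$ is the global maximal fixed point of $\mathcal{B}_\mathfrak{F}$ (Proposition~\ref{p.existence} applied with $Y=X$, since $X$ is trivially $\mathcal{B}_\mathfrak{F}$-invariant), and the Conley/strict dichotomy of Theorem~\ref{mt.local}. First I would unwind the definitions: $X^*=\bigcap_{n\ge 0}\mathcal{B}_\mathfrak{F}^n(X)$ is a decreasing intersection of nonempty compacta, hence nonempty and compact, and is a fixed point of $\mathcal{B}_\mathfrak{F}$; moreover every fixed point $A$ satisfies $A=\mathcal{B}_\mathfrak{F}^n(A)\subset\mathcal{B}_\mathfrak{F}^n(X)$ for all $n$, so $A\subset X^*$. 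Thus $X^*$ is automatically the \emph{largest} fixed point, and the content of the theorem is really about when it is also the \emph{smallest} one, namely $\overline{A_\mathrm{t}}$.

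For $(1)\Rightarrow(3)$, assuming $\overline{A_\mathrm{t}}=X^*$, I would show $\mathcal{B}_\mathfrak{F}^n(K)\to X^*$ in the Hausdorff metric for every nonempty compact $K\subset X$. Taking $K=X$ gives the nested convergence $\mathcal{B}_\mathfrak{F}^n(X)\downarrow X^*$ directly from compactness (a decreasing sequence of compacta converges in Hausdorff distance to its intersection). For a general compact $K\subset X$ one has $K\subset X$, so $\mathcal{B}_\mathfrak{F}^n(K)\subset\mathcal{B}_\mathfrak{F}^n(X)$, giving the upper-semicontinuity half $\limsup_n\mathcal{B}_\mathfrak{F}^n(K)\subset X^*$; the reverse inclusion, that every point of $\overline{A_\mathrm{t}}=X^*$ is approximated by points of $\mathcal{B}_\mathfrak{F}^n(K)$, I would obtain from the characterisation \eqref{e.characterizationAt}: each $x\in A_\mathrm{t}$ equals $\lim_n T_{\xi_0}\circ\cdots\circ T_{\xi_n}(p)$ with $p\in K$ arbitrary, and $T_{\xi_0}\circ\cdots\circ T_{\xi_n}(p)\in\mathcal{B}_\mathfrak{F}^{n+1}(K)$, so $A_\mathrm{t}\subset\liminf_n\mathcal{B}_\mathfrak{F}^n(K)$ and the inclusion passes to the closure. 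This yields that $X^*$ is a global attractor.

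The implication $(3)\Rightarrow(2)$ is the easiest: a global attractor is in particular a strict attractor, hence a fixed point of $\mathcal{B}_\mathfrak{F}$; and its global attraction forces $\mathcal{B}_\mathfrak{F}^n(A')\to X^*$ for any fixed point $A'$, but $\mathcal{B}_\mathfrak{F}^n(A')=A'$ is constant, so $A'=X^*$, proving uniqueness. For $(2)\Rightarrow(1)$, I would use that both $\overline{A_\mathrm{t}}$ and $X^*$ are fixed points of $\mathcal{B}_\mathfrak{F}$: that $X^*$ is a fixed point is immediate from its definition and the continuity of $\mathcal{B}_\mathfrak{F}$, while $\overline{A_\mathrm{t}}$ is a fixed point because $A_\mathrm{t}$ is $\mathcal{B}_\mathfrak{F}$-invariant with the reverse inclusion coming from the shift-structure of the coding (each $x=\pi(\xi)$ is the image under $T_{\xi_0}$ of $\pi(\sigma\xi)$ when $\sigma\xi\in S_\mathrm{t}$). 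Given $(2)$, uniqueness of the fixed point forces these two to coincide, giving $(1)$.

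The main obstacle I anticipate is the $(2)\Rightarrow(1)$ step, specifically verifying that $\overline{A_\mathrm{t}}$ is genuinely a fixed point of $\mathcal{B}_\mathfrak{F}$ rather than merely $\mathcal{B}_\mathfrak{F}$-invariant. The inclusion $\mathcal{B}_\mathfrak{F}(\overline{A_\mathrm{t}})\subset\overline{A_\mathrm{t}}$ is the comfortable direction, following from continuity of the $T_i$ and closedness of $\overline{A_\mathrm{t}}$; the subtle direction is $\overline{A_\mathrm{t}}\subset\mathcal{B}_\mathfrak{F}(\overline{A_\mathrm{t}})$, which requires that every weakly hyperbolic sequence $\xi\in S_\mathrm{t}$ has its shift $\sigma\xi$ again in $S_\mathrm{t}$ so that $\pi(\xi)=T_{\xi_0}(\pi(\sigma\xi))$. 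Since $\mathrm{diam}(T_{\xi_0}\circ\cdots\circ T_{\xi_n}(X))\to 0$ clearly implies $\mathrm{diam}(T_{\xi_1}\circ\cdots\circ T_{\xi_n}(X))\to 0$ (the former set is the $T_{\xi_0}$-image of the latter, and for a continuous map on a compact space shrinking images along a subsequence must come from shrinking preimages), the shift-invariance of $S_\mathrm{t}$ should hold, but making this rigorous without any expansion hypothesis on $T_{\xi_0}$ is the delicate point, and I would expect the paper to have isolated it as a separate lemma.
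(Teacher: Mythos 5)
Your cycle $(1)\Rightarrow(3)\Rightarrow(2)\Rightarrow(1)$ is a reasonable alternative to the paper's organisation (the paper gets $1\Leftrightarrow 2$ in one stroke from Proposition~\ref{p.l.minimality} and $1\Rightarrow 3$ via Theorem~\ref{mt.local}), and two of your three legs are sound: $(3)\Rightarrow(2)$ is the paper's argument verbatim, and your $(1)\Rightarrow(3)$ is essentially a $\liminf$/$\limsup$ reformulation of Lemma~\ref{l.atracaofatal}, which the paper proves instead by a compactness--contradiction argument. But the step you yourself flag as delicate in $(2)\Rightarrow(1)$ is genuinely broken as proposed. The claim that ``for a continuous map on a compact space shrinking images must come from shrinking preimages,'' i.e.\ that $\xi\in S_{\mathrm{t}}$ implies $\sigma\xi\in S_{\mathrm{t}}$, is false without injectivity, and the theorem assumes none: take $X=[0,1]$, $T_1\equiv \tfrac12$ constant and $T_2=\mathrm{id}$; then $\xi=1\bar 2$ satisfies $T_1\circ T_2^{n}(X)=\{\tfrac12\}$ for all $n$, so $\xi\in S_{\mathrm{t}}$, while $\sigma\xi=\bar 2\notin S_{\mathrm{t}}$. (What is true, and what the paper uses elsewhere, is the reverse inclusion $\sigma^{-1}(S_{\mathrm{t}})\subset S_{\mathrm{t}}$.) In general one only gets $\{\pi(\xi)\}=T_{\xi_0}(I_{\sigma\xi})$ with $I_{\sigma\xi}$ a possibly nondegenerate fibre contained in $X^*$ but not necessarily in $A_{\mathrm{t}}$, so your route to $\overline{A_{\mathrm{t}}}\subset\mathcal{B}(\overline{A_{\mathrm{t}}})$ does not close.

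The conclusion you are after --- that $\overline{A_{\mathrm{t}}}$ is a fixed point --- is nevertheless correct, and the paper's Proposition~\ref{p.l.minimality} obtains it by a detour that avoids shift-invariance entirely: one proves only the easy inclusion $\mathcal{B}(\overline{A_{\mathrm{t}}})\subset\overline{A_{\mathrm{t}}}$, deduces from Proposition~\ref{p.existence} that $(\overline{A_{\mathrm{t}}})^*\eqdef\bigcap_n\mathcal{B}^n(\overline{A_{\mathrm{t}}})$ is a fixed point contained in $\overline{A_{\mathrm{t}}}$, and then invokes the minimality statement (every fixed point $K$ satisfies $\emptyset\ne\bigcap_n T_{\xi_0}\circ\cdots\circ T_{\xi_n}(K)\subset\{\pi(\xi)\}$ for $\xi\in S_{\mathrm{t}}$, as in \eqref{e.asinequation}, hence $\overline{A_{\mathrm{t}}}\subset K$) to force $(\overline{A_{\mathrm{t}}})^*=\overline{A_{\mathrm{t}}}$. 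Note also that under your hypothesis $(2)$ there is an even shorter repair: $(\overline{A_{\mathrm{t}}})^*$ and $X^*$ are both fixed points, hence equal by uniqueness, and since $(\overline{A_{\mathrm{t}}})^*\subset\overline{A_{\mathrm{t}}}\subset X^*$ all three sets coincide, which is exactly $(1)$. Either way, you should replace the shift-invariance argument by one of these.
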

 
We observe that the
statement in Theorem~\ref{mt.att} is sharp. Indeed, there are examples of non-weakly hyperbolic IFSs  where 
$A_{\mathrm{t}}\subsetneq \overline{A_{\mathrm{t}}}= X^*$, see Section~\ref{s.examples}.

Let us observe that for
 weakly hyperbolic IFSs it holds $A_{\mathrm{t}}=X^{*}$, see Lemma \ref{l.tiane}
 and also \cite{Ed}.
 We observe that  there are IFSs that are non-weakly hyperbolic such that
$A_{\mathrm{t}}=  \overline{A_{\mathrm{t}}}= X^{*}$, see Section~\ref{s.examples}.
%
%

\begin{mteo}[Disjunctive chaos game]
\label{mt.jogodocaos}
Consider  an $\mathrm{IFS}(T_{1},\dots,T_{k})$ defined on a compact metric space $X$
such that $\overline{A_{\mathrm{t}}}$ is a stable fixed point of the Barnsley-Hut\-chin\-son operator.
 Then for every  $x\in  X$ and every disjunctive sequence $\xi\in \Sigma_k^+$ we have
$$
\overline{A_{\mathrm{t}}}=\bigcap_{\ell \geq 0} \overline{\{x_{n,\xi}\colon n\geq \ell\}},
\quad\text{where}\quad
 x_{n,\xi}\eqdef T_{\xi_{n}}\circ\dots \circ T_{\xi_{0}}(x).
$$ 
In particular
 $$
\lim_{\ell\to \infty} \{x_{n,\xi}\colon n\geq \ell\}= \overline{A_{\mathrm{t}}},
$$
where
the limit is considered in the Hausdorff distance.
\end{mteo}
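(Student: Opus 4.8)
The plan is to prove the two displayed identities by establishing the double inclusion
$\overline{A_{\mathrm{t}}}\subseteq\bigcap_{\ell\geq 0}\overline{\{x_{n,\xi}\colon n\geq\ell\}}$
and the reverse, exploiting the disjunctivity of $\xi$ together with the stability of $\overline{A_{\mathrm{t}}}$ as a fixed point. First I would set $L_\ell(x,\xi)\eqdef\overline{\{x_{n,\xi}\colon n\geq\ell\}}$ and note that $(L_\ell)_\ell$ is a nested decreasing sequence of nonempty compact subsets of the compact space $X$, so the intersection $L(x,\xi)\eqdef\bigcap_\ell L_\ell(x,\xi)$ is nonempty and compact, and moreover $L_\ell\to L$ in the Hausdorff metric. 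This reduces the ``in particular'' statement to the first displayed equality, so the whole theorem comes down to showing $L(x,\xi)=\overline{A_{\mathrm{t}}}$.

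For the inclusion $\overline{A_{\mathrm{t}}}\subseteq L(x,\xi)$, I would argue that every point $\pi(\eta)$ with $\eta\in S_{\mathrm{t}}$ is approximated by tail orbit points. The key observation is that $x_{n,\xi}=T_{\xi_n}\circ\cdots\circ T_{\xi_0}(x)$ runs through compositions recorded by longer and longer \emph{prefixes} of $\xi$, and since $\xi$ is disjunctive its shift-orbit visits every cylinder: for any finite word $w=w_0\dots w_m$ and any $\ell$ there is some $n\geq\ell$ such that $\xi_n\xi_{n-1}\dots\xi_{n-m}=w_0 w_1\dots w_m$, i.e. the most recent $m+1$ symbols applied (the outermost maps in the composition $x_{n,\xi}$) agree with $w$. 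Given $\eta\in S_{\mathrm{t}}$, weak hyperbolicity lets me choose $m$ so large that $\mathrm{diam}(T_{\eta_0}\circ\cdots\circ T_{\eta_m}(X))$ is as small as desired; then picking an occurrence in $\xi$ of the reversed prefix $\eta_m\dots\eta_0$ among the outermost maps forces $x_{n,\xi}$ to lie in $T_{\eta_0}\circ\cdots\circ T_{\eta_m}(X)$, hence within that small diameter of $\pi(\eta)$. Since such $n$ exist for every $\ell$, we get $\pi(\eta)\in L_\ell$ for all $\ell$, so $A_{\mathrm{t}}\subseteq L$ and, $L$ being closed, $\overline{A_{\mathrm{t}}}\subseteq L$.

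For the reverse inclusion $L(x,\xi)\subseteq\overline{A_{\mathrm{t}}}$, I would use stability. Fix an arbitrary open neighbourhood $V$ of $\overline{A_{\mathrm{t}}}$; by stability (equation~\eqref{e.stableBH}) there is a neighbourhood $V_0$ with $\mathcal{B}^n(V_0)\subseteq V$ for all $n\geq 0$. The point is that $\overline{A_{\mathrm{t}}}$, being a (minimum) fixed point that attracts compact sets inside it, pulls the orbit into $V_0$ eventually: more precisely, I would show that for some $\ell_0$ one has $x_{n,\xi}\in V$ for all $n\geq\ell_0$. This is where I expect to lean on the fact that after enough ``good'' (weakly hyperbolic) blocks of symbols the image $T_{\xi_n}\circ\cdots\circ T_{\xi_0}(x)$ is trapped near the target set; disjunctivity guarantees such blocks occur, and stability prevents the orbit from escaping once it enters $V_0$. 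Since $V\supseteq\overline{A_{\mathrm{t}}}$ was arbitrary, every limit point of the tail lies in $\overline{A_{\mathrm{t}}}$, giving $L\subseteq\overline{A_{\mathrm{t}}}$.

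The main obstacle will be the reverse inclusion, specifically the \emph{trapping} argument: I must convert ``a disjunctive sequence contains arbitrarily long weakly hyperbolic prefixes'' into ``the orbit actually enters the small neighbourhood $V_0$ and then stays in $V$.'' The subtlety is that between two contracting blocks the intervening maps may expand and push the orbit point back out, so I cannot merely find one good block; I need that \emph{after} a good block the image already lies in $V_0$, and then invoke $\mathcal{B}^n(V_0)\subseteq V$ to control all subsequent times uniformly. Making the choice of block length depend correctly on $V$ and $V_0$, and ensuring the outermost (most recently applied) maps are the contracting ones rather than the innermost, is the delicate point; I would handle it by using the characterisation in~\eqref{e.characterizationAt} of $A_{\mathrm{t}}$ and the uniform smallness of $\mathrm{diam}(T_{\eta_0}\circ\cdots\circ T_{\eta_m}(X))$ for a fixed $\eta\in S_{\mathrm{t}}$ to locate a single suitable finite word whose occurrences in the disjunctive $\xi$ do the trapping.
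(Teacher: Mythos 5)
Your proposal is correct and follows essentially the same route as the paper: the forward inclusion via occurrences of a reversed weakly hyperbolic prefix among the outermost maps (so that $x_{n,\xi}\in T_{\eta_0}\circ\cdots\circ T_{\eta_m}(X)\subset V$), and the reverse inclusion by using one such occurrence to land $x_{n_0,\xi}$ in the stability neighbourhood $V_0$ and then invoking $\mathcal{B}^n(V_0)\subset V$ together with nestedness of the tails. The ``delicate point'' you flag is resolved exactly as you propose, so there is no gap.
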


\subsection{Ergodic properties of IFSs}\label{ss.ifsprobabilities}

\subsubsection{IFSs with probabilities} \label{sss.ifsprobailities}
Consider an $\mathrm{IFS}(T_{1},\dots , T_{k})$ defined on a compact metric space $X$ and
strictly positive numbers 
$p_{1},\ldots,p_{k}$ 
(called {\emph{weights}})
such that $\sum_{i=1}^{k}p_{i}=1$.
We denote by
$\mathfrak{b}=\mathfrak{b} (p_1,\dots,p_k)$ the (non-trivial) Bernoulli probability measure
with weights $p_1, \dots,p_k$ defined on $\Sigma_k^+$.
We denote by $\mathrm{IFS}(T_{1},\dots T_{k};\mfb)$
the IFS with the corresponding Bernoulli probability and say that it is an {\emph{IFS with probabilities.}} 

Let $\mathcal{M}_{1} (X)$ be the space of  
Borel probability measures defined on $X$ equipped with the weak$*$-topology. 
The \emph{Markov operator}  associated to the $\mathrm{IFS}(T_{1},\dots T_{k};\mfb)$ is defined by
\begin{equation}
\label{e.markov}
\mathfrak{T}_\mfb \colon \mathcal{M}_{1}(X)\rightarrow \mathcal{M}_{1}(X),
\qquad
\mathfrak{T}_\mfb \mu\eqdef \sum_{i=1}^{k}p_{i} \, T_{i\ast}\mu,
\end{equation}
where $T_{i\ast}\mu (A)= \mu (T_i^{-1}(A))$ for every Borel set $A$.
Note that the Markov operator $\mathfrak{T}_\mathfrak{b}$ is continuous. Hence, if
$\mfT_\mathfrak{b}$
is asymptotically stable
then its  attracting measure $\mu$ is stationary, that is, satisfies $\mfT_\mathfrak{b}\mu=\mu$.

An IFS with probabilities  $\mathrm{IFS}(T_{1},\dots T_{k};\mfb)$  is called
{\emph{asymptotically stable}} if its Markov operator $\mathfrak{T}_\mfb$ is asymptotically 
stable.
It is a folklore result that if $\mathfrak{b}(S_{\mathrm{t}})=1$ then the IFS is
asymptotically stable and $\pi_{*}\mathfrak{b}$ is the unique stationary measure, see for instance 
\cite{Sten, Letac}.
In Proposition \ref{p.attractingstationary} we prove
this fact and we see that
 $\mathrm{supp}(\pi_{*}\mathfrak{b})=\overline{A_{\mathrm{t}}}$.
 Note that, since that
 $\sigma^{-1}(S_{\mathrm{t}})\subset S_{\mathrm{t}}$,
 the ergodicity 
 of the Bernoulli  measure (with positive weights) $\mfb$ with respect to the shift 
 implies that either $\mathfrak{b}(S_{\mathrm{t}})=1$ or $\mathfrak{b}(S_{\mathrm{t}})=0$.

%
%
%
%
%

A combination of Theorem~\ref{mt.att} and Proposition~\ref{p.attractingstationary} 
allows us to recover properties of hyperbolic IFSs in non-hyperbolic settings provided that
 the
sets 
${A_{\mathrm{t}}}$ and 
$S_{\mathrm{t}}$ are ``big enough'' (from the topological and probabilistic points of view, respectively): 
there are a unique  global  attractor  and the IFS with probabilities is
asymptotically stable.

 Proposition~\ref{p.attractingstationary}  assumes that $\mathfrak{b}(S_{\mathrm{t}})=1$
(which is often difficult to verify).
When $X=[0,1]$  we  improve   this proposition replacing the condition 
$\mathfrak{b}(S_{\mathrm{t}})=1$ by the topological
  condition $\#(A_{\mathrm{t}})\ge 2$
     that we call \emph{separability} and it is quite straightforward to verify.


\begin{mteo}\label{mt.eimportante}
Consider an $\mathrm{IFS}(T_{1},\dots T_{k})$
defined on $[0,1]$ such that
\begin{itemize}
\item
the target set  $A_{\mathrm{t}}$  has at least two elements
and
\item
there is a non-trivial closed interval $J\subset [0,1]$ such that
 $T_i(J)\subset J$ and $T_i|_{J}$ is injective for every $j\in \{1,\dots,k\}$.
 \end{itemize}
 Then for every (non-trivial) Bernoulli probability $\mathfrak{b}$ the  $\mathrm{IFS}(T_{1},\dots T_{k};\mathfrak{b})$ 
is asymptotically stable.

Moreover,  $\pi_{*}\mathfrak{b}$ is the
(unique)  stationary measure of  
$\mathrm{IFS}(T_{1},\dots T_{k};\mathfrak{b})$, satisfies
 $\mathrm{supp}(\pi_{*}\mathfrak{b})=\overline{A_{\mathrm{t}}}$, and is continuous.
As a consequence, the set $A_{\mathrm{t}}$ 
 has no isolated points.
%
\end{mteo}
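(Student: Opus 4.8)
The plan is to prove Theorem~\ref{mt.eimportante} by bootstrapping from the topological separability hypothesis $\#(A_{\mathrm{t}})\ge 2$ to the probabilistic hypothesis $\mfb(S_{\mathrm{t}})=1$, after which the existing Proposition~\ref{p.attractingstationary} delivers asymptotic stability, the identification of the stationary measure as $\pi_*\mfb$, and the support statement $\mathrm{supp}(\pi_*\mfb)=\overline{A_{\mathrm{t}}}$. Thus the heart of the matter is the implication $\#(A_{\mathrm{t}})\ge 2 \Rightarrow \mfb(S_{\mathrm{t}})=1$, which is where the hypotheses specific to $[0,1]$ (injectivity on the invariant interval $J$ and the interval structure) must be exploited. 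Since $S_{\mathrm{t}}$ is $\sigma$-backward-invariant, ergodicity of $\mfb$ forces $\mfb(S_{\mathrm{t}})\in\{0,1\}$, so it suffices to rule out $\mfb(S_{\mathrm{t}})=0$; equivalently, I would show $\mfb(S_{\mathrm{t}})>0$.

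First I would set up the monotone nested intervals. For $\xi\in\Sigma_k^+$, write $J_{\xi,n}\eqdef T_{\xi_0}\circ\cdots\circ T_{\xi_n}(J)$; these are nested compact subintervals of $J$ (using $T_i(J)\subset J$), so their diameters decrease and $\xi\in S_{\mathrm{t}}$ precisely when $\mathrm{diam}(J_{\xi,n})\to 0$. The injectivity of each $T_i|_J$ makes each $T_i|_J$ a continuous injection of an interval into itself, hence strictly monotone, so the length functional behaves controllably under composition. The key quantitative step is to estimate the expected contraction: I would integrate the logarithm or directly the diameter $\mathrm{diam}(J_{\xi,n})$ against $\mfb$ and try to show the expected length tends to $0$, which by a Borel--Cantelli or martingale argument would give $\mathrm{diam}(J_{\xi,n})\to 0$ almost surely, i.e.\ $\mfb(S_{\mathrm{t}})=1$.

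The main obstacle is precisely controlling this contraction without any uniform hyperbolicity: the maps may have expanding fixed points, so individual $\mathrm{diam}(T_i(J_{\xi,n-1}))$ can exceed $\mathrm{diam}(J_{\xi,n-1})$. Here the hypothesis $\#(A_{\mathrm{t}})\ge 2$ must be leveraged to produce at least one genuinely contracting configuration inside $J$: since there exist two distinct weakly hyperbolic sequences with distinct projections, there must be a finite word $w$ whose associated composition $T_w|_J$ shrinks $J$ strictly, i.e.\ $\mathrm{diam}(T_w(J))<\mathrm{diam}(J)$. I would argue that any sufficiently long word contains, with positive $\mfb$-frequency guaranteed by the strong law of large numbers for the i.i.d.\ Bernoulli coding, enough occurrences of such a contracting block, and that by monotonicity of the $T_i|_J$ the total length cannot recover between applications of $w$. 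Making this precise is the delicate part: I would likely phrase it via the function $\phi(\xi)\eqdef\lim_n\mathrm{diam}(J_{\xi,n})$, show $\{\phi>0\}$ is a shift-invariant set, and then use separability to exhibit, within $\mathrm{supp}(\mfb)=\Sigma_k^+$, that $\phi$ cannot be positive on a full-measure set.

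Once $\mfb(S_{\mathrm{t}})=1$ is established, the remainder is essentially a citation of Proposition~\ref{p.attractingstationary}: asymptotic stability, the formula $\pi_*\mfb$ for the stationary measure, and $\mathrm{supp}(\pi_*\mfb)=\overline{A_{\mathrm{t}}}$ all follow. For the continuity (atomlessness) of $\pi_*\mfb$ and the absence of isolated points in $A_{\mathrm{t}}$, I would argue by contradiction: an atom of $\pi_*\mfb$ at some $x$ would pull back under the injective coding $\pi$ to a $\mfb$-positive fiber $\pi^{-1}(x)$; injectivity of the $T_i|_J$ should force this fiber to be a single point (or a $\mfb$-null set), and a single point has $\mfb$-measure zero since $\mfb$ is a non-trivial Bernoulli measure on $\Sigma_k^+$. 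Combining atomlessness with $\mathrm{supp}(\pi_*\mfb)=\overline{A_{\mathrm{t}}}$ then yields that $\overline{A_{\mathrm{t}}}$ is uncountable and perfect, so $A_{\mathrm{t}}$ has no isolated points, completing the proof.
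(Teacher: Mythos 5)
Your overall skeleton matches the paper's: reduce everything to $\mathfrak{b}(S_{\mathrm{t}})=1$ plus continuity of $\pi_{*}\mathfrak{b}$, and then invoke Proposition~\ref{p.attractingstationary}. But the core step, $\#(A_{\mathrm{t}})\ge 2\Rightarrow\mathfrak{b}(S_{\mathrm{t}})=1$, is not established by your argument, and the mechanism you propose is not the right one. Knowing that some word $w$ satisfies $\mathrm{diam}(T_w(J))<\mathrm{diam}(J)$ gives no contraction ratio on subintervals of $J$ and no quantitative decay rate, since the maps are only assumed continuous and injective on $J$ (they may have expanding fixed points, so there is no Lipschitz control). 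Your phrase ``the total length cannot recover between applications of $w$'' only restates that the diameters of the nested sets $T_{\xi_0}\circ\cdots\circ T_{\xi_n}(J)$ are non-increasing, which is automatic and says nothing about whether the limit is zero; an SLLN count of occurrences of $w$ therefore yields no estimate. What the paper actually extracts from $\#(A_{\mathrm{t}})\ge 2$ is not a contracting word but two finite words whose images of $I$ are \emph{disjoint} subsets of $J$ (via Proposition~\ref{p.dichotomy}, which puts two points of $A_{\mathrm{t}}$ in $\mathrm{int}(J)$, and Theorem~\ref{t.separableee}); this is the splitting condition. The measure-zero statement is then proved in Theorem~\ref{t.p.usadoAtdois}: for each fixed $x$, the set $\Sigma_x$ of codes whose fibre contains $x$ is $\mathfrak{b}$-null, via a block-substitution map that replaces occurrences of one word by the other. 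This map does not decrease measure and is \emph{injective} precisely because the two images are disjoint and the $T_i|_J$ are injective, so its image lies in the set of codes avoiding one fixed cylinder along an arithmetic progression of times, which is null by Birkhoff's theorem. Since every non-degenerate fibre contains a rational, $\Sigma_k^+\setminus S_{\mathrm{t}}\subset\bigcup_{q\in\mathbb{Q}\cap[0,1]}\Sigma_q$ is null. Nothing in your length/Borel--Cantelli/martingale sketch substitutes for this combinatorial argument, and you yourself flag the decisive step as ``the delicate part'' without supplying it.

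The continuity step has the same gap in a second guise: you claim injectivity of the $T_i|_J$ forces $\pi^{-1}(x)$ to be a single point (hence null). That is false in general --- the IFS may overlap, and $\pi^{-1}(x)$ can be uncountable. The paper instead uses $\pi^{-1}(x)\subset\Sigma_x$ together with $\mathfrak{b}(\Sigma_x)=0$, i.e.\ exactly Theorem~\ref{t.p.usadoAtdois} again. So both the asymptotic-stability step and the atomlessness step of your proposal rest on the same missing ingredient: the disjointness/substitution argument, not a contraction or injectivity-of-$\pi$ argument. The final deduction that $A_{\mathrm{t}}$ has no isolated points from continuity plus $\mathrm{supp}(\pi_{*}\mathfrak{b})=\overline{A_{\mathrm{t}}}$ is fine.
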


In the previous theorem, the purely topological condition $\#(A_{\mathrm{t}})\ge 2$
depending only on  $\mathrm{IFS}(T_{1},\dots T_{k})$ implies the asymptotic stability of
the Markov operator $\mathfrak{T}_\mathfrak{b}$ of $\mathrm{IFS}(T_{1},\dots T_{k};\mathfrak{b})$
for any (non-trivial) Bernoulli probability $\mathfrak{b}$. Moreover, we also obtain properties of the stationary measure. The support of this stationary measure is independent of the Bernoulli probability.
In Proposition~\ref{p.ultimosdias} we state a result 
about the support of stationary measures
that holds for general compact metric spaces: if
$S_{\mathrm{t}}\ne \emptyset$ and the Markov operator associated 
to $\mfb$ is asymptotically stable then the support of its stationary 
measure always is $\overline{A_{\mathrm{t}}}$, even when $\mfb (S_{\mathrm{t}})=0$.

The asymptotic stability of an IFS with probabilities has been obtained in
several contexts such as, for example, \emph{contracting on average} \cite{BarElton},
weakly hyperbolic \cite{Ed}, and \emph{non-overlapping}\footnote{
An IFS  is called non-overlapping if the maps $T_{i}$ are injective and the sets $T_{i}(I)$ 
have disjoint interiors.  
We will see that separability is a weak form of non-overlapping, see 
Theorem~\ref{t.separableee}. We observe that \cite{Ed} and \cite{BarElton} do not involve injective-like conditions of the IFS.}
 \cite{Stri}. Observe that the contexts of \cite{BarElton,Ed} have a hyperbolic flavour.
  Let us also observe that \cite{SzZd} states the asymptotic stability of admissible IFSs 
   consisting of circle homeomorphisms (these homeomorphisms preserve  the orientation and 
   some homeomorphism of the IFS is transitive). Note that in this case the set $S_{\mathrm{t}}$ is empty.
Let us compare these results with Theorem~\ref{mt.eimportante}. 
First, the condition to be contracting on average depends on the selected Bernoulli probability
(an IFS may be contracting in average with respect to some probabilities but not with respect
to  {\emph{all}}
Bernoulli probabilities).
In contrast, weak hyperbolicity, separability, non-overlapping, and admissibility conditions are 
topological conditions that do not involve
 probabilities. These conditions guarantee the
asymptotic stability of the
Markov operator $\mathfrak{T}_\mathfrak{b}$ of the IFS with respect to \emph{any} Bernoulli probability
$\mathfrak{b}$.


Finally, note that checking the properties of weak hyperbolicity and contracting on average may be rather complicated,
while the separability condition is comparably much  simpler, thus
 Theorem \ref{mt.eimportante} can also be useful in these contexts. 

\subsubsection{Recurrent IFSs}\label{sss.recurrent}
A generalization of IFSs with probabilities are the so-called \emph{recurrent IFSs} introduced 
 in \cite{RecurrentBar}, where the 
weights $p_{i}$ are replaced by a transition matrix.

To be more precise, recall that
a $k\times k$ matrix  $P=(p_{ij})$ is a 
\emph{transition matrix} if 
$p_{ij}\ge 0$ for all $i,j$ and 
for every $i$ it holds
$\sum_{j=1}^{k}p_{ij}=1$.
An \emph{stationary probability vector} associated to  $P$  is a 
  vector $\bar p=(p_{1},\ldots,p_{k})$ whose elements are 
  non-negative real numbers and sum up to $1$ and satisfies $\bar p \, P= \bar p$.
The transition matrix $P$ is called \emph{irreducible} if  
for every $\ell,r\in \{1,\dots,k\}$ there is $n=n(\ell,r)$ such that
$P^{n}=(p^n_{ij})$ satisfies $p^n_{\ell,r}>0$.
An irreducible transition matrix has a unique   stationary
probability vector $\bar p=(p_{i})$, see 
\cite[page 100]{Keme}.
We consider
the {\emph{cylinders}}
 $$
 [a_{0}\dots a_{\ell}]\eqdef \{\omega\in \Sigma_k^+ \colon \omega_{0}=a_{0},\dots,\omega_{\ell}=a_{\ell}\}
 \subset\Sigma_k^+
 $$ 
which is a semi-algebra that generates the Borel $\sigma$-algebra of $\Sigma_k^+$.
We denote by
$\mathbb{P}^{+}$
the {\emph{Markov measure}} associated to $(P,\bar p)$ defined  on $\Sigma_{k}^{+}$,
this measure is defined on the cylinders $[a_{0}\ldots a_\ell]$  by
 $$
 \mathbb{P}^+([a_{0}\ldots a_\ell])\eqdef
 p_{a_{0}}p_{a_{0}a_{1}}\ldots p_{a_{\ell-1}a_\ell}.
 $$ 

Given an $\mathrm{IFS}(T_{1},\dots T_{k})$ 
and an irreducible transition matrix  $P=(p_{ij})$,
we call
 $\mathrm{IFS}(T_{1},\dots T_{k};\mathbb{P}^{+})$ a \emph{recurrent $\mathrm{IFS}$}. We now  introduce the Markov operator in 
 this context.
 Consider the set $\widehat{X}\eqdef X\times \{1,\dots, k\}$ with the 
 product topology and the corresponding Borel sets. 
Given a subset $\widehat{B}\subset\widehat{X}$, its 
{\emph{$i$-section}} is defined by
$$
\widehat B_{i}\eqdef\{x\in X\colon (x,i)\in \widehat{B}\}. 
$$
The {\emph{$i$-section of a probability 
measure $\widehat{\mu}$}} on $\widehat{X}$ is defined on the set $X$ by
$$
\mu_{i}(B)\eqdef \widehat{\mu}(B\times \{i\}), 
\quad \mbox{
where $B$ is any Borel subset of $X$.}
$$
Observe that 
${\mu}_i$ is a finite measure on $X$ but, in general, it is 
not a probability measure. 
Since the measure $\widehat \mu$ is completely defined by its sections
we write $\widehat{\mu}=(\mu_{1},\dots,\mu_{k})$
and note that   
$$
\widehat{\mu}(\widehat B)=\sum_{j=1}^{k}\mu_{j}(\widehat B_{j})
\quad \mbox{for every Borel subset $\widehat B$
of $\widehat{X}$}.
$$
%

The
\emph{(generalised) Markov operator}
of recurrent $\mathrm{IFS}(T_{1},\dots T_{k};\mathbb{P}^{+})$
 is defined by  
  \begin{equation}\label{e.generalised}
   \mathfrak{S}_{\mathbb{P}^{+}}\colon \mathcal{M}_{1}(\widehat{X})\to\mathcal{M}_{1}(\widehat{X}), \quad
   \widehat \mu \mapsto \mathfrak{S}_{\mathbb{P}^{+}}(\widehat{\mu}),
  \end{equation}
 where
 $$
  \mathfrak{S}_{\mathbb{P}^{+}}(\widehat{\mu})(\widehat{B})\eqdef \sum_{i,j}p_{ij}T_{j*}\mu_{i}(\widehat B_{j}).
$$
   

A recurrent $\mathrm{IFS}(T_{1},\dots T_{k};\mathbb{P}^+)$  is called
{\emph{asymptotically stable}} if 
the Markov operator 
$\mathfrak{S}_{\mathbb{P}^+}$ 
is asymptotically  stable. 

Given a Markov measure $\mathbb{P}^+$ there is associated its 
{\emph{inverse Markov measure}} 
$\mathbb{P}^-$ defined on $\Sigma_k^+$ by
\begin{equation}\label{e.markovinverse}
\mathbb{P}^- ([a_{0}a_1\dots a_n])\eqdef
\mathbb{P}^+ ([a_{n}\dots a_{1}a_{0}]),
\quad
\mbox{for a cylinder $[a_{0}a_1\dots a_n]$.}
\end{equation}
 The measure $\mathbb{P}^-$  is also Markov (see Section~\ref{sss.inversemarkov}).

There is the following
 \emph{generalised coding map} from $S_{\mathrm{t}}$ to $\widehat{X}$ defined by
\begin{equation}\label{e.generalisedcodingmap}
\varpi \colon S_{\mathrm{t}} \to \widehat{X}, \qquad \varpi(\xi)\eqdef(\pi(\xi),\xi_{0}).  
\end{equation}

In Theorem~\ref{t.p.t.recurrent} we see that if a recurrent
$\mathrm{IFS}(T_{1},\dots T_{k};\mathbb{P}^{+})$  is
such that $\mathbb{P}^{-}(S_{\mathrm{t}})=1$ 
and $\mathbb{P}^{-}$ is mixing then it
is asymptotically stable 
and the stationary measure of $\mathfrak{S}_{\mathbb{P}^+}$ is
 $\varpi_{*}\mathbb{P}^{-}$, that is,
 $$
 \mathfrak{S}^n_{\mathbb{P}^+} (\widehat \mu) 
 \underset{*}{\to} \varpi_{*}\mathbb{P}^{-}
 \quad
 \mbox{for every $\widehat{\mu}\in \mathcal{M}_1 (\widehat X)$}.
 $$
 This is a
version of Proposition~\ref{p.attractingstationary} for recurrent IFSs.

As in the case of IFSs with probabilities,
when $X=[0,1]$  we can improve 
Theorem~\ref{t.p.t.recurrent}. In this proposition it is assumed that
$\mathbb{P}^-(S_{\mathrm{t}})=1$
(verifying this assumption is in general difficult). When $X=[0,1]$ we
can replace this condition by a ``splitting
 condition''   that is quite straightforward to verify.

Consider a
recurrent $\mathrm{IFS}(T_{1},\dots T_{k};\mathbb{P}^{+})$
defined on a compact metric space $X$.
A cylinder $[j_{1}\dots j_{s}]$ is called {\emph{admissible}} if 
$\mathbb{P}^{+}([j_{1}\dots j_{s}])>0$.

\begin{defi}[Splitting Markov measure]
\label{d.splits}
Consider an IFS
$\mathfrak{F}=\mathrm{IFS}(T_1,\dots,T_k)$ defined on $[0,1]$ and a non-trivial closed
 interval $J$ of $[0,1]$.
A Markov measure $\mathbb{P}^{+}$ defined on $\Sigma_{k}^{+}$
\emph{splits the IFS $\mathfrak{F}$ in $J$} if
 \begin{itemize}
 \item
 $T_i(J)\subset J$ and $T_i|_{J}$ is injective for every $j\in \{1,\dots,k\}$,
 \item
  there are admissible cylinders
$[i_{1}\dots i_{\ell}]$  and
 $[j_{1}\dots j_{s}]$ of $\mathbb{P}^+$ 
 with  $i_{1}=j_{1}$
 such that 
  $$
 T_{j_{1}}\circ\dots\circ T_{j_{s}}(I)\cap T_{i_{1}}\circ\dots \circ T_{i_{\ell}}(I)=\emptyset
 $$ 
 and 
 $$
  T_{j_{1}}\circ\dots\circ T_{j_{s}}(I) \cup T_{i_{1}}\circ\dots \circ T_{i_{\ell}}(I)
  \subset J.
 $$
 \end{itemize}
 When $J=I$ we say that  $\mathbb{P}^+$ 
{\emph{splits $\mathfrak{F}$}}.
\end{defi}
%
%

Let $T$ be a measure-preserving transformation on 
 a probability space $(X,\mathbb{B},\mu)$.  
 Recall that  $(T,\mu)$ is  \emph{ergodic} if for 
 every measurable set $A$ with 
$T^{-1}(A)=A$ it holds $\mu(A)=0$ or $\mu(A)=1$. 
Recall that $(T,\mu)$ is \emph{mixing} if 
$$
\lim_{n\to \infty} \mu(T^{-n}(A)\cap B)=\mu(A)\, \mu(B)
\quad
\mbox{for every } A,B\in \mathbb{B}.
$$
A Borel measure $\mu$ on $\Sigma_{k}^{+}$ is  \emph{mixing} if the system
$(\sigma,\mu)$ is mixing.

Next theorem states consequences of the splitting property of a Markov measure and is the
main tool to get  the asymptotic stability of the  Markov operator.

\begin{mteo}\label{mt.abundance}
Consider an $\mathrm{IFS}(T_{1},\dots T_{k})$
defined on the interval $[0,1]$. If $\mathbb{P}^{+}$ is mixing Markov measure
that
splits the $\mathrm{IFS}$ in some non-trivial closed interval $J$ then $\mathbb{P}^{+}(S_{\mathrm{t}})=1$.
\end{mteo}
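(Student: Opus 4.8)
The plan is to show that for $\mathbb{P}^{+}$-almost every $\xi$ the nested compact images shrink to a point, where I abbreviate $f_{u}\eqdef T_{u_1}\circ\cdots\circ T_{u_m}$ for a finite word $u=u_1\cdots u_m$ and $f_{\xi_0\cdots\xi_n}=T_{\xi_0}\circ\cdots\circ T_{\xi_n}$. First I would record the reduction to $J$. Set $C(\xi)\eqdef\bigcap_n f_{\xi_0\cdots\xi_n}(J)$, a well-defined nested intersection of compact intervals (each $T_i|_{J}$ is a monotone homeomorphism onto its image, so every $f_{u}|_J$ is monotone and the sets $f_{\xi_0\cdots\xi_n}(J)$ are genuinely nested intervals). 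Since $\mathbb{P}^{+}$ is mixing it is ergodic, and the admissible cylinder $[i_1\cdots i_\ell]$ has positive measure, so by Poincaré recurrence $\mathbb{P}^{+}$-a.e. $\xi$ contains the block $i_1\cdots i_\ell$; at its first occurrence the image of $[0,1]$ equals $f_{i_1\cdots i_\ell}([0,1])\subset J$, and since $T_i(J)\subset J$ all later images $f_{\xi_0\cdots\xi_n}([0,1])$ also lie in $J$. Because $f_{i_1\cdots i_\ell}([0,1])\subset J$, the limit sets $\bigcap_n f_{\xi_0\cdots\xi_n}([0,1])$ and $C(\xi)$ coincide, so $\xi\in S_{\mathrm{t}}$ if and only if $C(\xi)$ is a singleton. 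Thus it suffices to prove that $C(\xi)$ is $\mathbb{P}^{+}$-a.e. a point.

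The next step is a zero–one law. Using that $T_{\xi_0}|_{J}$ is a homeomorphism onto its image and commutes with the nested intersection, one gets the equivariance $C(\xi)=T_{\xi_0}\big(C(\sigma\xi)\big)$, valid for every $\xi$. Since $T_{\xi_0}|_{J}$ is injective, $C(\xi)$ is a singleton if and only if $C(\sigma\xi)$ is; hence $E\eqdef\{\xi:\ C(\xi)\text{ is a single point}\}$ satisfies $\sigma^{-1}(E)=E$ (this is the measure-theoretic counterpart of $\sigma^{-1}(S_{\mathrm{t}})\subset S_{\mathrm{t}}$). Mixing implies ergodicity, so $\mathbb{P}^{+}(E)\in\{0,1\}$, and it remains only to exclude $\mathbb{P}^{+}(E)=0$.

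Here the splitting enters. Iterating the equivariance over a whole block, for $\xi\in[i_1\cdots i_\ell]$ one has $C(\xi)=f_{i_1\cdots i_\ell}\big(C(\sigma^{\ell}\xi)\big)\subset f_{i_1\cdots i_\ell}(J)\subset A$, where $A\eqdef T_{i_1}\circ\cdots\circ T_{i_\ell}([0,1])$, and likewise $C(\xi)\subset B\eqdef T_{j_1}\circ\cdots\circ T_{j_s}([0,1])$ for $\xi\in[j_1\cdots j_s]$; by hypothesis $A\cap B=\emptyset$. Assume, for a contradiction, that $\mathbb{P}^{+}(E)=0$, i.e. $C(\xi)$ is $\mathbb{P}^{+}$-a.e. a nondegenerate interval with endpoints $\ell_\infty(\xi)<r_\infty(\xi)$. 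The endpoint pair transforms equivariantly, $\{\ell_\infty(\xi),r_\infty(\xi)\}=T_{\xi_0}\big(\{\ell_\infty(\sigma\xi),r_\infty(\sigma\xi)\}\big)$, so the tagged boundary push-forward of $\mathbb{P}^{+}$ under $\xi\mapsto(\ell_\infty(\xi),\xi_0)$ and $\xi\mapsto(r_\infty(\xi),\xi_0)$ should produce a stationary measure for the generalised Markov operator $\mathfrak{S}_{\mathbb{P}^{+}}$ whose spatial marginal charges the two disjoint closed sets $A$ (via the positive-measure cylinder $[i_1\cdots i_\ell]$) and $B$ (via $[j_1\cdots j_s]$). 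The goal is to show that such a stationary two-section boundary measure is impossible, forcing the equivariant boundary to collapse to a single section, hence $\ell_\infty=r_\infty$ a.e., contradicting nondegeneracy and leaving $\mathbb{P}^{+}(E)=1$.

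I expect this collapse to be the main obstacle. It is a synchronisation statement for random monotone maps, and the subtlety is genuine: for a fixed sequence the diameters need not shrink even when both blocks recur infinitely often — a single repeated map pinned at a repelling fixed point, or a perfectly periodic alternation, can leave a nondegenerate limit interval — so no purely combinatorial ``both blocks appear'' argument and no uniform per-block contraction is available, the maps being only continuous and injective (so $f_u$ may expand subintervals and there is no distortion control). The mixing hypothesis is therefore essential and must be used quantitatively. Concretely I would exploit monotonicity on $J$ together with the disjointness $A\cap B=\emptyset$: each occurrence of $i_1\cdots i_\ell$ pushes $C(\xi)$ into $A$ and each occurrence of $j_1\cdots j_s$ into the disjoint $B$, so along a $\mathbb{P}^{+}$-typical $\xi$ the boundary points are driven through the gap separating $A$ from $B$ infinitely often; combined with mixing (decay of correlations between far-separated coordinates), this incompatibility with a single equivariant two-point section is what should yield $\ell_\infty=r_\infty$ and hence $\mathbb{P}^{+}(S_{\mathrm{t}})=1$.
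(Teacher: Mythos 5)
Your reduction to the interval $J$ and the zero--one law are essentially sound: the fibre $C(\xi)=\bigcap_n T_{\xi_0}\circ\cdots\circ T_{\xi_n}(J)$ satisfies $C(\xi)=T_{\xi_0}(C(\sigma\xi))$ (for nested compacta the image of an intersection equals the intersection of the images), injectivity on $J$ then makes the singleton property fully shift-invariant, and ergodicity gives $\mathbb{P}^{+}(E)\in\{0,1\}$ (you need the splitting block to occur infinitely often, not just once, to identify $\bigcap_n T_{\xi_0}\circ\cdots\circ T_{\xi_n}([0,1])$ with $C(\xi)$, but that holds almost everywhere). The genuine gap is the final and decisive step: you never prove that $\mathbb{P}^{+}(E)=0$ is impossible. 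What you offer is a programme (``the goal is to show that such a stationary two-section boundary measure is impossible'') plus a heuristic about mixing and the gap between $A$ and $B$. As you yourself observe, recurrence of both blocks does not force the fibres to collapse; moreover it is simply not true that a stationary measure of $\mathfrak{S}_{\mathbb{P}^{+}}$ cannot charge two disjoint closed sets --- stationary measures routinely have disconnected support --- so the contradiction you aim for does not follow from the ingredients you have assembled. There is also an orientation issue: the equivariance $C(\xi)=T_{\xi_0}(C(\sigma\xi))$ runs against the direction of the Markov chain, so your endpoint push-forwards would naturally be stationary for the operator built from the \emph{inverse} Markov measure, a point that would need care even if the impossibility statement were available.

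The paper closes exactly this gap by a combinatorial mechanism, and it uses mixing only qualitatively, never through decay of correlations. It first upgrades the splitting to two admissible cylinders $[\xi_0\dots\xi_{N-1}]$ and $[\omega_0\dots\omega_{N-1}]$ of the \emph{same} length $N$, with the same first and last symbols and with disjoint images inside $J$ (this is where mixing enters). Fixing $x\in[0,1]$, it bounds the measure of the union $S_x^j$ of cylinders of length $jN$ whose image contains $x$ by the measure of the union $Q^j$ of cylinders avoiding the block $\xi_0\dots\xi_{N-1}$ at all aligned positions: the substitution map replacing every aligned occurrence of the $\xi$-block by the $\omega$-block is injective on the cylinders of $S_x^j$ (two distinct preimages with the same image would force $x$ into both disjoint block-images, using injectivity on $J$) and does not decrease $\mathbb{P}^{+}$-measure (by the Markov property and the matching first and last symbols, after ordering the two blocks by measure). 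Birkhoff's theorem for $\sigma^N$ gives $\mathbb{P}^{+}(Q^j)\to 0$, hence $\mathbb{P}^{+}(\Sigma_x)=0$ for every $x$; since any nondegenerate fibre contains a rational, the complement of $S_{\mathrm{t}}$ is covered by countably many null sets $\Sigma_x$ and $\mathbb{P}^{+}(S_{\mathrm{t}})=1$. Some argument of this kind --- quantitative at the level of cylinder counts rather than correlations --- is precisely what your sketch is missing.
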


Next theorem gives sufficient conditions for the asymptotic stability of the Mar\-kov operator.
 
\begin{mteo}\label{mt.generalizedB}
Consider a recurrent $\mathrm{IFS}(T_{1},\dots T_{k};\mathbb{P}^{+})$ 
defined on the interval $[0,1]$. Suppose that
the 
inverse Markov measure $\mathbb{P}^{-}$ is  mixing and splits the $\mathrm{IFS}$
in some non-trivial closed interval $J$. 
Then $\mathrm{IFS}(T_{1},\dots T_{k};\mathbb{P}^{+})$ 
is asymptotically stable and $\varpi_{*}\mathbb{P}^{-}$ is the stationary measure 
of  the Markov operator $\mathfrak{S}_{\mathbb{P}^{+}}$.
 \end{mteo}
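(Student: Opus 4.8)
The plan is to deduce this statement by simply combining the two results on recurrent IFSs already established earlier, Theorem~\ref{mt.abundance} and Theorem~\ref{t.p.t.recurrent}, so that essentially no new dynamical analysis is required. The whole proof reduces to feeding the hypotheses into these two theorems in the right order.

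First I would record that the inverse Markov measure $\mathbb{P}^{-}$ is itself a Markov measure on $\Sigma_k^+$ (see Section~\ref{sss.inversemarkov}). Consequently Theorem~\ref{mt.abundance}, although stated for a generic mixing Markov measure there denoted $\mathbb{P}^{+}$, applies verbatim with $\mathbb{P}^{-}$ playing that role: its conclusion is a statement about the $S_{\mathrm{t}}$-measure of an arbitrary mixing Markov measure that splits the IFS, and $S_{\mathrm{t}}$ is defined purely through the maps $T_1,\dots,T_k$ (see~\eqref{e.stweakly}), so the forward/inverse label of the measure is immaterial to that theorem. Since by hypothesis $\mathbb{P}^{-}$ is mixing and splits the $\mathrm{IFS}$ in the non-trivial closed interval $J$, Theorem~\ref{mt.abundance} yields $\mathbb{P}^{-}(S_{\mathrm{t}})=1$.

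Second, with $\mathbb{P}^{-}(S_{\mathrm{t}})=1$ now in hand and $\mathbb{P}^{-}$ mixing by hypothesis, the hypotheses of Theorem~\ref{t.p.t.recurrent} are satisfied for the recurrent $\mathrm{IFS}(T_{1},\dots T_{k};\mathbb{P}^{+})$. That theorem then provides both conclusions at once: the recurrent IFS is asymptotically stable, and the stationary measure of the generalised Markov operator $\mathfrak{S}_{\mathbb{P}^{+}}$ is $\varpi_{*}\mathbb{P}^{-}$, i.e.\ $\mathfrak{S}^{n}_{\mathbb{P}^{+}}(\widehat{\mu})\to\varpi_{*}\mathbb{P}^{-}$ in the weak$\ast$ topology for every $\widehat{\mu}\in\mathcal{M}_{1}(\widehat{X})$. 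This closes the argument.

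I expect the only genuinely delicate point to be the bookkeeping in the first step: one must check that ``$\mathbb{P}^{-}$ splits the $\mathrm{IFS}$ in $J$'' is precisely the splitting hypothesis that Theorem~\ref{mt.abundance} consumes, since Definition~\ref{d.splits} phrases the admissibility of the relevant cylinders in terms of a measure written $\mathbb{P}^{+}$. The splitting condition should be read as a property of whichever Markov measure is fed in, its admissible cylinders being exactly those of positive measure; applying it to $\mathbb{P}^{-}$ is therefore legitimate and requires no translation between the forward and inverse measures. Everything else is a direct invocation of the two cited theorems.
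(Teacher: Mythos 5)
Your proposal is correct and coincides with the paper's own proof: the paper likewise applies Theorem~\ref{mt.abundance} to the mixing splitting measure $\mathbb{P}^{-}$ to obtain $\mathbb{P}^{-}(S_{\mathrm{t}})=1$, and then invokes Theorem~\ref{t.p.t.recurrent} to conclude. Your remark that Theorem~\ref{mt.abundance} consumes an arbitrary mixing Markov measure (so the forward/inverse labelling is immaterial) is exactly the reading the paper relies on.
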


Note that $\mathbb{P}^+$ is mixing if and only if  $\mathbb{P}^-$ is mixing. However,  
a splitting property  for   $\mathbb{P}^+$ does not imply a splitting property for $\mathbb{P}^-$ (and 
vice-versa).

\subsection{Preliminaries and notation}\label{s.pre}
We now establish some basic definitions and notations.

\subsubsection{Distances}
Throughout this paper
$(X,d)$ is a compact metric space and $\mathcal{P}(X)$ denotes
 the power set of $X$.
 Given a point $x\in X$ and a set $A\subset X$,  distance between $x$ and $A$
 is defined
  by 
 $$
 d(x,A)\eqdef \inf\{d(x,a)\colon a\in A\}.
 $$
 The {\emph{Hausdorff distance}} between two sets $A,B\subset X$ is defined by 
$$
 d_{H}(A,B)\eqdef \max\{h_{s}(A,B),h_{s}(B,A)\},
 \quad
 \mbox{where} 
\quad
 h_{s}(A,B)\eqdef \sup_{a\in A}d(a,B).
 $$
 Note that, in general, $d_H$ is only a pseudo-metric  defined on $\mathcal{P}(X)$.
 Let $\mathcal{H}(X)\subset \mathcal{P}(X)$ be the set of all non-empty compact subsets of $X$.
Then $(\mathcal{H}(X),d_{H})$ is a compact metric space, see \cite{Ba}.
%
%
%
%
%
 
  \subsubsection{Inverse Markov measures}\label{sss.inversemarkov}
%

Consider a transition matrix $P=(p_{ij})$ and a stationary probability vector $\bar p=
(p_1, \dots, p_k)$ of $P$.
If all entries of $\bar p$ are (strictly) positive then the 
\emph{inverse transition matrix} associated to $(P,\bar p)$ is the matrix ${Q}_{(P,\bar p)}=(q_{ij})$ where
$$
q_{ij}\eqdef \frac{p_{j}}{p_{i}}\, p_{ji}.
$$
Note that $Q={Q}_{(P,\bar p)}$ is a transition matrix and $\bar p$ is a stationary probability vector of 
${Q}_{(P,\bar p)}$.  We observe that if $P$ is primitive if and only if $Q$ is primitive.

 Denote by $\mathbb{P}^{-}$ the Markov 
measure associated to $(Q,\bar p)$.  For every cylinder $[a_{0}\dots a_{\ell}]$ it holds 
$$
\mathbb{P}^{-}([a_{0}\dots a_{\ell}])=\mathbb{P}^{+}([a_{\ell}\dots a_{0}]),
$$
where $\mathbb{P}^+$ is the Markov measure associated to $(P,\bar p)$.

Let us observe 
that a Markov measure $\mathbb{P}^{+}$
is mixing if and only if the transition matrix is 
\emph{primitive}\footnote{Also called {\emph{aperiodic.}}} (i.e. there is $n\geq 1$ such that
all the entries of $P^{n}$ are strictly positive), see for instance \cite[page 79]{Brin}.
As a consequence, $\mathbb{P}^-$ is mixing if and only if $P$ is primitive.

\section{Attractors of iterated function systems}
\label{s.attractors}


 This section is devoted to the study 
of
fixed points and the attractors of the  Barnsley-Hutchinson operator of an IFS (see
Sections~\ref{ss.Bfixed} and \ref{ss.conleyandstrict}).
Our 
goal is to prove  Theorems~\ref{mt.local}, \ref{mt.att},
and \ref{mt.jogodocaos} (see Sections~\ref{ss.mtlocal}, \ref{ss.attractor}, and \ref{ss.mtjogo},
respectively).
We also get some topological properties of the target set $A_{\mathrm{t}}$ in Section~\ref{ss.structure}.

In what follows we consider $\mathfrak{F}=\mathrm{IFS}(T_{1},\dots T_{k})$
and denote by $\mathcal{B}_\mathfrak{F}=\mathcal{B}$ its 
 Barnsley-Hutchinson operator, recall \eqref{e.BH}.

\subsection{Fixed points for the Barnsley-Hutchinson operator}
 \label{ss.Bfixed}
 We will show that 
every compact invariant set $A$ of $X$ (i.e., $\mathcal{B}(A)\subset A)$
contains some fixed point of $\mathcal{B}$.
Since $X$ is invariant for $\mathcal{B}$ this implies that $\mathcal{B}$ always 
has at least one fixed point. 
To each set $A$ we associate the set  
$A^* \eqdef
\bigcap_{n\geq 0} \mathcal{B}^{n}(A)$, recall \eqref{e.conjestrela}.

Recall that $\mathcal{H}(X)$ denotes the set consisting of all non-empty compact subsets of $X$.
We consider in $\mathcal{H}(X)$ the Hausdorff distance $d_H$.

\begin{prop}[Existence of fixed points of $\mcalB$]\label{p.existence}
Consider  $A\in \mcalH (X)$ such that  $\mathcal{B}(A)\subset A$. 
Then $A^*$ is a  fixed point of $\mathcal{B}$. 
In particular, $X^{*}$ is a fixed point of $\mcalB$.
\end{prop}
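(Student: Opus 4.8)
The plan is to show that $A^*=\bigcap_{n\geq 0}\mathcal{B}^n(A)$ is a well-defined, nonempty compact set and then verify directly that $\mathcal{B}(A^*)=A^*$. The first observation is that invariance $\mathcal{B}(A)\subset A$ forces the sequence $\mathcal{B}^n(A)$ to be \emph{nested}: applying $\mathcal{B}$ to $\mathcal{B}^n(A)\subset\mathcal{B}^{n-1}(A)$ (and using that $\mathcal{B}$ is monotone with respect to inclusion, which is immediate from its definition as a union of images) yields $\mathcal{B}^{n+1}(A)\subset\mathcal{B}^n(A)$. Each $\mathcal{B}^n(A)$ is compact, since $A$ is compact and each $T_i$ is continuous, so the finite union $\bigcup_i T_i(\mathcal{B}^{n-1}(A))$ is compact; an inductive argument gives compactness at every level. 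Thus we have a decreasing sequence of nonempty compact sets, and by the finite intersection property $A^*$ is nonempty and compact.

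Next I would prove the two inclusions $\mathcal{B}(A^*)\subset A^*$ and $A^*\subset\mathcal{B}(A^*)$. The first is easy: since $A^*\subset\mathcal{B}^n(A)$ for every $n$, monotonicity gives $\mathcal{B}(A^*)\subset\mathcal{B}^{n+1}(A)$ for every $n$, hence $\mathcal{B}(A^*)\subset\bigcap_{n\geq 1}\mathcal{B}^n(A)=A^*$ (the intersection from $n\geq 1$ equals the one from $n\geq 0$ by nestedness). The reverse inclusion is where one must be careful, because intersection does not commute with the union and the images $T_i(\cdot)$ in general. The key step is to take $x\in A^*$ and, for each $n$, pick an index $i_n$ and a point $y_n\in\mathcal{B}^n(A)$ with $x=T_{i_n}(y_n)$, using that $x\in\mathcal{B}^{n+1}(A)=\bigcup_i T_i(\mathcal{B}^n(A))$. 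By the pigeonhole principle some index $i$ occurs infinitely often, and along that subsequence the corresponding $y_n$ lie in the nested compact sets $\mathcal{B}^n(A)$; by compactness I extract a convergent subsequence $y_{n_j}\to y$.

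The point $y$ belongs to $A^*$: for any fixed $m$, eventually $y_{n_j}\in\mathcal{B}^{n_j}(A)\subset\mathcal{B}^m(A)$, and $\mathcal{B}^m(A)$ is closed, so the limit $y$ lies in $\mathcal{B}^m(A)$ for every $m$, giving $y\in A^*$. Finally, continuity of $T_i$ gives $T_i(y)=\lim_j T_i(y_{n_j})=x$, so $x=T_i(y)\in T_i(A^*)\subset\mathcal{B}(A^*)$. This proves $A^*\subset\mathcal{B}(A^*)$ and hence equality, so $A^*$ is a fixed point of $\mathcal{B}$. The last sentence of the statement is then immediate by applying this to $A=X$, which is $\mathcal{B}$-invariant since $\mathcal{B}(X)=\bigcup_i T_i(X)\subset X$.

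The main obstacle is precisely the reverse inclusion $A^*\subset\mathcal{B}(A^*)$, i.e.\ the fact that $\bigcap_n\bigcup_i T_i(\mathcal{B}^{n-1}(A))$ need not obviously equal $\bigcup_i T_i(\bigcap_n\mathcal{B}^{n-1}(A))$. The pigeonhole-plus-compactness diagonal argument above is exactly what is needed to push a point of the intersection back into a single $T_i$-image of $A^*$; this step genuinely uses compactness of $X$ (to extract the convergent subsequence) and continuity of the maps (to pass the limit through $T_i$), and it is where all the work lies.
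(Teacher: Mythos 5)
Your proof is correct, but it takes a genuinely different route from the paper. The paper's proof is a two-line reduction: it shows (Lemma~\ref{l.semnome}) that a nested sequence of compact sets converges in the Hausdorff metric to its intersection, applies this to $A_n=\mathcal{B}^n(A)$ to get $\mathcal{B}^n(A)\to A^*$ in $(\mathcal{H}(X),d_H)$, and then invokes the continuity of $\mathcal{B}$ on the hyperspace to conclude $\mathcal{B}(A^*)=\lim_n\mathcal{B}^{n+1}(A)=A^*$. You instead argue by double inclusion directly at the level of points, and your treatment of the hard inclusion $A^*\subset\mathcal{B}(A^*)$ --- writing $x=T_{i_n}(y_n)$ with $y_n\in\mathcal{B}^n(A)$, pigeonholing a recurring index $i$, extracting a convergent subsequence $y_{n_j}\to y$, checking $y\in A^*$ via closedness of the nested sets, and passing the limit through $T_i$ --- is exactly the depth-one analogue of what the paper later does with K\"onig's lemma in Lemma~\ref{l.tiane} to commute the union and intersection at all depths. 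Your version buys self-containedness: it needs only compactness of $X$ and continuity of the $T_i$, and avoids relying on the (asserted but unproved in the paper) continuity of $\mathcal{B}$ on $(\mathcal{H}(X),d_H)$. The paper's version buys brevity and reuses Lemma~\ref{l.semnome}, which is needed elsewhere anyway. Both are complete and correct.
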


\begin{proof}
The proposition follows from the
next lemma and the continuity of $\mathcal{B}$.

 \begin{lema}
 \label{l.semnome}
 Let $(A_{n})$ be a sequence of nested compact sets, $A_{n+1}\subset A_n$,
 and $A=\bigcap_{n\geq0} A_{n}$. Then
 $d_{H}(A_{n},A)\rightarrow 0$.
 \end{lema}
 
  \begin{proof}
The proof is by contradiction. If the lemma is false there are
 $\epsilon>0$ and a subsequence $(n_{\ell})$, $n_\ell \to \infty$, such that 
 $d_{H}(A_{n_{\ell}},A)\geq \epsilon$ for all $\ell$. 
 Since $A\subset A_{n}$, 
 for each $\ell$
 there is a point $p_{\ell}\in A_{n_{\ell}}$ such that $d(p_{\ell},A)\geq \epsilon$. 
 By compactness, taking a subsequence if necessary, we can assume that 
 $p_{\ell}\to p$. 
 As $ (A_{n})$ is nested 
 it follows that $p\in A$, contradicting  that
 $d(p_{\ell},A)\geq \epsilon$ for all $\ell$. 
\end{proof}
To prove the proposition it is enough to apply the lemma to nested sequence $A_n=\mathcal{B}^n(A)$.
\end{proof}

 Now let us look more closely to the fixed
point $X^{*}$ of $\mathcal{B}$.
For that to each  $\xi\in \Sigma_k^+$ we consider its 
{\emph{fibre}} defined by 
\begin{equation}
\label{spine}
I_{\xi}\eqdef 
\bigcap_{n\geq 0} T_{\xi_{0}}\circ\cdots\circ T_{\xi_{n}}(X), \quad
\mbox{if $\xi=\xi_0\xi_1\dots$}.
\end{equation}
We will see in Lemma~\ref{l.tiane}
that the set 
$X^{*}$ is the union of the fibres  $I_{\xi}$.

Note that every fibre is a non-empty set: just note
 that  $(T_{\xi_{0}}\circ\cdots\circ T_{\xi_{n}}(X))_{n\in \mathbb{N}}$ is a
sequence of nested compacts sets. Moreover, when  $X$ is
an interval, the fibres also are intervals (may be trivial ones). With this
definition, the set $ S_\mathrm{t}$  of weakly hyperbolic sequences, recall \eqref{e.stweakly}, is given by
$$
 S_\mathrm{t}=\{ \xi\in \Si_k^+ \colon \mbox{$I_\xi$ is a singleton}\}.
$$
 From the definition of the target set  $ A_{\mathrm{t}}$ in \eqref{e.characterizationAt} it immediately follows that
  \begin{equation}
  \label{e.At}
 A_{\mathrm{t}}= \bigcup_{\xi \in S_\mathrm{t}} I_\xi.
 \end{equation}
 Recall that by definition for every set $A$ we have
 \begin{equation}
  \label{e.Xestrela}
 A^{*}=\bigcap_{n\geq 0} \mathcal{B}^{n}(A)=\bigcap_{n\geq 0} \,\,\bigcup_{\xi\in\Sigma_{k}^{+}} T_{\xi_{0}}\circ\cdots\circ T_{\xi_{n-1}}(A).
\end{equation}
 Next lemma just says that the 
 operations $``\cup"$ and $``\cap"$  above commute.
 

\begin{lema}\label{l.tiane}
Let $A\in \mathcal{H}(X)$ such that $\mathcal{B}(A)\subset A$. Then 
$$
A^*=
\bigcap_{n\geq 0} \mathcal{B}^{n}(A)=\bigcup_{\xi\in\Sigma_{k}^{+}} 
\,\, \bigcap_{n\geq 0} T_{\xi_{0}}\circ\cdots\circ T_{\xi_{n}}(A). 
$$
In particular,   $$
 X^{*}=\bigcup_{\xi\in \Sigma_{k}^{+}} I_{\xi}.
 $$
\end{lema}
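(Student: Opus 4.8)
The plan is to prove the two inclusions separately; the whole content is the commutation of $\bigcup$ and $\bigcap$, which holds here thanks to compactness and a nestedness coming from the invariance hypothesis. Write $W^\xi_n \eqdef T_{\xi_{0}}\circ\cdots\circ T_{\xi_{n}}(A)$ for $\xi\in\Sigma_k^+$ and $n\geq 0$, so that the right-hand side of the lemma is $\bigcup_{\xi}\bigcap_{n}W^\xi_n$. The first thing I would record is that $\mathcal{B}(A)\subset A$ forces $T_i(A)\subset A$ for every $i$, and hence that the sets $W^\xi_n$ are nested in $n$: indeed $W^\xi_{n+1}=T_{\xi_{0}}\circ\cdots\circ T_{\xi_{n}}\big(T_{\xi_{n+1}}(A)\big)\subset W^\xi_n$. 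Thus each $\bigcap_{n}W^\xi_n$ is a decreasing intersection of non-empty compact sets. I would also recall from \eqref{e.Xestrela} that $\mathcal{B}^{n}(A)=\bigcup_{\xi}T_{\xi_{0}}\circ\cdots\circ T_{\xi_{n-1}}(A)$.

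For the inclusion $\supseteq$ (the easy one): if $x\in\bigcap_{n}W^\xi_n$ for some fixed $\xi$, then for every $n$ the point $x$ lies in $T_{\xi_{0}}\circ\cdots\circ T_{\xi_{n-1}}(A)$, which is one of the terms in the union defining $\mathcal{B}^{n}(A)$; hence $x\in\mathcal{B}^{n}(A)$ for all $n$, i.e. $x\in A^*$.

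For the inclusion $\subseteq$ (the hard one) I would run a tree argument. Call a finite word $(w_{0},\dots,w_{m-1})$ \emph{$x$-admissible} if $x\in T_{w_{0}}\circ\cdots\circ T_{w_{m-1}}(A)$. The nestedness observation above says precisely that the set of $x$-admissible words is closed under taking prefixes, so it forms a subtree of $\bigcup_{m}\{1,\dots,k\}^{m}$. If $x\in A^*$ then $x\in\mathcal{B}^{n}(A)$ for every $n$, so by the formula for $\mathcal{B}^{n}(A)$ there is an $x$-admissible word of every length $n$; therefore the tree is infinite. As it is also finitely branching (each node has at most $k$ children), K\"onig's lemma furnishes an infinite branch $\xi\in\Sigma_k^+$ all of whose finite prefixes are $x$-admissible, that is $x\in W^\xi_n$ for every $n$, giving $x\in\bigcup_{\xi}\bigcap_{n}W^\xi_n$.

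I expect this last step to be the main obstacle: from the fact that at each level $n$ there exists \emph{some} length-$n$ word placing $x$ in the corresponding image, one must extract a \emph{single} infinite sequence $\xi$ valid simultaneously at all levels, and the witnesses at different levels need not be compatible. K\"onig's lemma is the natural tool; equivalently one may arbitrarily extend each length-$n$ witness to a point of $\Sigma_k^+$, pass to a convergent subsequence using the compactness of $\Sigma_k^+$, and verify admissibility of every prefix of the limit via the prefix-closure (tree) property. Finally, the displayed ``in particular'' follows by applying the identity to $A=X$, which satisfies $\mathcal{B}(X)\subset X$ and for which $\bigcap_{n}T_{\xi_{0}}\circ\cdots\circ T_{\xi_{n}}(X)=I_{\xi}$ by \eqref{spine}.
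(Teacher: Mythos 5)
Your proposal is correct and follows essentially the same route as the paper: the easy inclusion by direct comparison of the union/intersection orders, and the hard inclusion via König's lemma applied to the prefix-closed tree of admissible words (the paper phrases this as a graph on the image sets and must pass to the underlying simple graph, whereas your tree of words sidesteps that small technicality, but the idea is identical). The nestedness observation and the specialization to $A=X$ also match the paper's argument.
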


\begin{proof}
Condition  $\mathcal{B}(A)\subset A$ implies that 
 $\mathcal{B}^n(A)$ is a decreasing nested family of compact subsets and
 $T_i (A) \subset A$ for all $i=1,\dots, k$.
 From equation \eqref{e.Xestrela} it follows immediately that
$$
\bigcup_{\xi\in\Sigma_{k}^{+}} \, \bigcap_{n\geq 0} 
T_{\xi_{0}}\circ\cdots\circ T_{\xi_{n}}(A)\subset 
\bigcap_{n\geq 0}\,\, \bigcup_{\xi\in\Sigma_{k}^{+}} T_{\xi_{0}}\circ\cdots\circ T_{\xi_{n}}(A)=A^*.
$$
which implies the inclusion ``$\supset$''. 

To prove the inclusion ``$\subset$''
we use the following classical combinatorial lemma, see for instance
\cite[page 302]{Konig}. 

\begin{lema}[K\"onig's lemma]\label{l.konig}
Let $G$ be a connected graph with infinitely many vertices such that every vertex has 
finite degree. Then $G$ contains an infinite path with no 
repeated vertices.
\end{lema}

Take now any point 
$p\in \bigcap_{n\ge 0}\, \mcalB^n(A)$.
Then for each $n\geq 1$ there is a finite sequence 
$\beta^n_{0}\ldots\beta^n_{n-1}$ 
such that $p\in T_{\beta^n_{0}}\circ\cdots\circ T_{\beta^n_{n-1}}(A)$. 
As $\mathcal{B}(A)\subset A$ this implies that $p\in T_{\beta^n_{0}}\circ\cdots\circ T_{\beta^n_{\ell}}(A)$ for all 
$\ell\leq n-1$. 

We now apply 
Lemma~\ref{l.konig}
to the graph $G$ whose vertices are the sets
$$
\bigcup_{n\ge 0}
\left\{ T_{\beta^n_{0}}\circ\cdots\circ T_{\beta^n_{n-1}}(A),
T_{\beta^n_{0}}\circ\cdots\circ T_{\beta^n_{n-2}}(A),\cdots,
T_{\beta^n_{0}}(A),A\right\}.
$$
Note that, in principle, an vertex can be obtained  using different compositions.

The edges of the graph are defined as follows: the vertex 
$T_{\beta^n_{0}}\circ\cdots\circ T_{\beta^n_{\ell}}(A)$
 has edges joining to
$T_{\beta^n_{0}}\circ\cdots\circ T_{\beta^n_{\ell-1}}(A)$ and $T_{\beta^n_{0}}\circ\cdots\circ T_{\beta^n_{\ell+1}}(A)$
(provided $\ell-1\ge 0$ and $\ell+1\le n$). 

Observe that the way we define the graph $G$ allows that a pair of adjacent vertices may have 
infinite links a, thus in such a case the graph has not finite degree. To
bypass this difficulty,  
we consider the \emph{underlying simple} graph $G_{0}$ of $G$
 obtained by deleting from every pair of adjacent vertices all but one edge joining them.
For the underlying simple graph $G_0$ the ``top vertex'' $A$ has at most $k$ edges (joining to the sets $T_1(A), \dots , T_{k}(A))$ and the other vertices has at most $k+1$ edges.
 In this way, the graph $G_{0}$ has finite degree at most $k+1$.

Lemma~\ref{l.konig} now gives a simple path with infinite length. This simple path provides
 a sequence 
$\xi=\xi_{0}\ldots \xi_{n}\ldots$ such that 
$p\in T_{\xi_{0}}\circ\cdots\circ T_{\xi_{n}}(A)$ for every $n$. This implies the 
inclusion ``$\subset$''. 
\end{proof}

 \subsection{Conley and strict attractors}
 \label{ss.conleyandstrict}
 In this section we introduce the notion of a minimum fixed point of an IFS
 and prove that if  $S_{\mathrm{t}}\ne \emptyset$ then the closure of the target set
 is a minimum fixed point of $\mathcal{B}$. We also characterise strict attractors for IFSs with 
 $S_{\mathrm{t}}\ne \emptyset$.

 \subsubsection{Minimal fixed points and minimum fixed point}
 \label{minimalandminimum} 
Note that the set $X^{*}$ is the \emph{maximum fixed point} (ordered by inclusion)
 of the map $\mathcal{B}$, meaning that
if $K$ is another fixed point of $\mathcal{B}$ then $K\subset X^{*}$.
A natural  question is about the existence of a 
\emph{minimum fixed point} $Y$ of $\mathcal{B}$,  
meaning that
if $K$ is any fixed point of $\mathcal{B}$ then $Y\subset K$.
By definition,  maximum and minimum fixed points are  unique.
We see that, in general, may no exist a minimum fixed point.
 Observe  that
an application of  Zorn's lemma immediately provides a
\emph{minimal fixed point} for $\mathcal{B}$, that is,
a fixed point that does not contain properly another fixed point.
Note that, by definition, a minimum fixed point is minimal, but the converse is not true in general.

To get a simple example of an IFS without a minimum fixed point
just consider
the $\mathrm{IFS}(T_{1},T_{2})$ defined on the  interval $[0,1]$ with $T_{1}(x)=x$
and $T_{2}(x)=1-x$. For each $x\in [0,1]$, the set $\{x,1-x\}$ is a fixed 
point of $\mathcal{B}$. Clearly, the set $\{x,1-x\}$ is minimal.
It is also obvious, that there is not a 
fixed point contained in all fixed points. 
Thus  the minimal fixed points cannot be minimum fixed points.

In the previous example  we have $S_{\mathrm{t}}= \emptyset$.
Next proposition shows that the condition $S_{\mathrm{t}}\neq \emptyset$ 
guarantees the existence of a minimum fixed point. For the next result recall the characterisation
of the set $A_t$ in \eqref{e.At}.

\begin{prop}
\label{p.l.minimality}
Suppose that $S_\mathrm{t}\ne \emptyset$. Then $\overline{A_\mathrm{t}}$ is the minimum fixed point of $\mathcal B$.
\end{prop}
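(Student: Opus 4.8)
The plan is to prove the two defining properties of a minimum fixed point separately and in a convenient order: first that $\overline{A_{\mathrm{t}}}$ is contained in \emph{every} fixed point of $\mathcal{B}$ (minimality), and only afterwards that $\overline{A_{\mathrm{t}}}$ is itself a fixed point. The minimality statement is the engine, so I would establish it first and then bootstrap it via Proposition~\ref{p.existence}, thereby avoiding a direct (and delicate) verification of the inclusion $\overline{A_{\mathrm{t}}}\subset\mathcal{B}(\overline{A_{\mathrm{t}}})$.

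\emph{Minimality.} Let $K\in\mathcal{H}(X)$ be any fixed point, $\mathcal{B}(K)=K$; as $K$ is closed it suffices to show $A_{\mathrm{t}}\subset K$. From $\mathcal{B}(K)=K$ one gets $T_i(K)\subset K$ for every $i$, so $K$ is forward invariant under all the maps. Fixing $\eta\in S_{\mathrm{t}}$ and any $p\in K$, the points $T_{\eta_0}\circ\cdots\circ T_{\eta_n}(p)$ lie in $K$ for all $n$ and, by the definition of $\pi$ in \eqref{e.pi}, converge to $\pi(\eta)$. Since $K$ is closed, $\pi(\eta)\in K$; hence $A_{\mathrm{t}}=\pi(S_{\mathrm{t}})\subset K$ and $\overline{A_{\mathrm{t}}}\subset K$. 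This argument uses only $S_{\mathrm{t}}\neq\emptyset$ and does not presuppose that $\overline{A_{\mathrm{t}}}$ is a fixed point.

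\emph{Invariance and fixed-point property.} Next I would check $\mathcal{B}(\overline{A_{\mathrm{t}}})\subset\overline{A_{\mathrm{t}}}$. Because each $T_i$ is uniformly continuous on the compact space $X$, if $\xi\in S_{\mathrm{t}}$ then $\mathrm{diam}\big(T_i\circ T_{\xi_0}\circ\cdots\circ T_{\xi_{n-1}}(X)\big)\to 0$, so the prepended sequence $i\xi$ again lies in $S_{\mathrm{t}}$ and $\pi(i\xi)=T_i(\pi(\xi))$. Thus $T_i(A_{\mathrm{t}})\subset A_{\mathrm{t}}$, giving $\mathcal{B}(A_{\mathrm{t}})\subset A_{\mathrm{t}}$; taking closures and using $T_i(\overline{A_{\mathrm{t}}})=\overline{T_i(A_{\mathrm{t}})}$ yields $\mathcal{B}(\overline{A_{\mathrm{t}}})=\overline{\mathcal{B}(A_{\mathrm{t}})}\subset\overline{A_{\mathrm{t}}}$. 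Since $S_{\mathrm{t}}\neq\emptyset$ makes $\overline{A_{\mathrm{t}}}$ a non-empty compact $\mathcal{B}$-invariant set, Proposition~\ref{p.existence} applies: $(\overline{A_{\mathrm{t}}})^{*}=\bigcap_{n\ge 0}\mathcal{B}^n(\overline{A_{\mathrm{t}}})$ is a fixed point of $\mathcal{B}$ with $(\overline{A_{\mathrm{t}}})^{*}\subset\overline{A_{\mathrm{t}}}$. Feeding this fixed point back into the minimality step (with $K=(\overline{A_{\mathrm{t}}})^{*}$) gives the reverse inclusion $\overline{A_{\mathrm{t}}}\subset(\overline{A_{\mathrm{t}}})^{*}$, so $\overline{A_{\mathrm{t}}}=(\overline{A_{\mathrm{t}}})^{*}$ is a fixed point; being inside every fixed point, it is the minimum one.

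\emph{Main obstacle.} The genuinely awkward point is the inclusion $\overline{A_{\mathrm{t}}}\subset\mathcal{B}(\overline{A_{\mathrm{t}}})$ if attacked head-on: since the $T_i$ need not be injective, a point $\pi(\eta)=T_{\eta_0}(w)$ with $w\in I_{\sigma(\eta)}$ need not obviously satisfy $w\in\overline{A_{\mathrm{t}}}$, because $\sigma(\eta)$ may fail to belong to $S_{\mathrm{t}}$ and $I_{\sigma(\eta)}$ may be a nondegenerate fibre. The detour through Proposition~\ref{p.existence} circumvents this entirely: minimality forces the maximal fixed point sitting inside $\overline{A_{\mathrm{t}}}$ to exhaust all of $\overline{A_{\mathrm{t}}}$. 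The only other place demanding care is the invariance step, where the compactness of $X$ (hence uniform continuity of each $T_i$) is exactly what propagates the weakly hyperbolic property from $\xi$ to $i\xi$.
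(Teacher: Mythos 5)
Your proposal is correct and follows essentially the same route as the paper: establish minimality first via the definition of $\pi$ (the paper phrases it with nested intersections $\bigcap_n T_{\xi_0}\circ\cdots\circ T_{\xi_n}(K)$, you with convergent orbits in $K$, which is equivalent), then show $\mathcal{B}(\overline{A_{\mathrm{t}}})\subset\overline{A_{\mathrm{t}}}$ by prepending a symbol to a weakly hyperbolic sequence, and finally bootstrap through Proposition~\ref{p.existence} to get $\overline{A_{\mathrm{t}}}=(\overline{A_{\mathrm{t}}})^{*}$. The only cosmetic difference is your explicit appeal to uniform continuity to show $i\xi\in S_{\mathrm{t}}$, which the paper leaves implicit under ``continuity of the maps $T_i$.''
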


 \begin{proof}
 We need to see that
 $\mathcal B(\overline{A_\mathrm{t}})=\overline{A_\mathrm{t}}$ 
 and  $\overline{A_\mathrm{t}}\subset K$ 
 for every  compact set $K$ with $\mathcal B(K)=K$.

To prove the second assertion,  fix any compact set $K$ that is  fixed point of $\mathcal{B}$ and take any point $p\in A_\mathrm{t}$. By the 
 characterisation of $A_t$ in \eqref{e.At} 
 there is a sequence $\xi$ such that 
 \begin{equation}\label{e.asinequation}
 \{p\}=\bigcap_{n\geq 0} T_{\xi_{0}}\circ\cdots\circ T_{\xi_{n}}(X)
 \supset \bigcap_{n\geq 0} T_{\xi_{0}}\circ\cdots\circ T_{\xi_{n}}(K).
 \end{equation}
 Since the last intersection is non-empty and contained in $K$ it follows $p\in K$. 
 This implies that $A_\mathrm{t}$ (and hence $\overline{A_\mathrm{t}}$) 
 is contained in $K$.

To see that $\mathcal B(\overline{A_\mathrm{t}})=\overline{A_\mathrm{t}}$ note that 
the continuity of the maps $T_i$ 
implies that for $p$ as in \eqref{e.asinequation} and  every $i=1,\dots,k$ it holds
$$
\{T_{i}(p)\}=
\bigcap_{n\geq 0} T_{i}\circ T_{\xi_{0}}\circ\cdots\circ T_{\xi_{n}}(X).
$$
This implies that $\mathcal{B}({A_{t}})\subset{A_{t}}$.  Hence, by continuity of the maps $T_i$,
$\mathcal{B}(\overline{A_{t}})\subset\overline{A_{t}}$. By 
definition this implies that $(\overline{A_{\mathrm{t}}})^*\subset (\overline{A_{\mathrm{t}}})$. 
By Proposition ~\ref{p.existence}, $(\overline{A_{\mathrm{t}}})^*$ is a fixed point of $\mathcal{B}$. 
The minimality property proved before implies that $\overline A_{\mathrm{t}}\subset (\overline{A_{\mathrm{t}}})^*$.
This ends the proof of the proposition.
 \end{proof}
 
 Note that in the proof of the proposition we obtained the following.
 \begin{schol}\label{sc.Atinv}
 Given an $\mathrm{IFS}(T_{1},\dots T_{k})$ with $S_{\mathrm{t}}\ne \emptyset$ it holds
 $T_i(A_t)\subset A_t$.
 \end{schol}

\subsubsection{Characterisation of strict attractors}
\label{sss.characestrict} 
The proposition below claims that an IFS with a weakly hyperbolic sequence
has at most one  strict attractor and describes such an attractor.

\begin{prop}\label{p.katia}
Consider an $\mathrm{IFS}$ defined on a compact metric space such that 
$S_{\mathrm{t}}\neq \emptyset$. Then there exists at most one strict 
attractor. If such a strict attractor exists then it is equal to $\overline{A_{\mathrm{t}}}$.  
\end{prop}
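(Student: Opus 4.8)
The plan is to deduce both claims from a single observation: any strict attractor must coincide with the minimum fixed point $\overline{A_{\mathrm{t}}}$ furnished by Proposition~\ref{p.l.minimality}. Once this is established, the uniqueness claim is immediate, since two strict attractors would both be forced to equal $\overline{A_{\mathrm{t}}}$.

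First I would fix a strict attractor $A$ and let $U$ denote its basin of attraction, so that $\lim_{n\to\infty}\mathcal{B}^n(K)=A$ in the Hausdorff distance for every compact $K\subset U$. Since $A$ is a strict attractor it is in particular a fixed point of $\mathcal{B}$. Because $S_{\mathrm{t}}\ne\emptyset$, Proposition~\ref{p.l.minimality} tells us that $\overline{A_{\mathrm{t}}}$ is the minimum fixed point of $\mathcal{B}$; hence $\overline{A_{\mathrm{t}}}\subset A\subset U$. In particular $\overline{A_{\mathrm{t}}}$ is a non-empty compact subset of the basin $U$.

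The key step is then to apply the defining convergence of the strict attractor to the compact set $K=\overline{A_{\mathrm{t}}}$ itself. Since $\overline{A_{\mathrm{t}}}$ is a fixed point of $\mathcal{B}$, the sequence $\big(\mathcal{B}^n(\overline{A_{\mathrm{t}}})\big)_n$ is constantly equal to $\overline{A_{\mathrm{t}}}$, so its Hausdorff limit is $\overline{A_{\mathrm{t}}}$. On the other hand, by the strict attractor property this same limit must be $A$. By uniqueness of limits in the metric space $(\mathcal{H}(X),d_H)$ we conclude $A=\overline{A_{\mathrm{t}}}$, which proves the second assertion. The uniqueness in the first assertion follows at once, as every strict attractor is forced to equal $\overline{A_{\mathrm{t}}}$.

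I do not expect a serious obstacle here: the argument is short and relies only on the already-proved minimality of $\overline{A_{\mathrm{t}}}$ together with the observation that a strict attractor must attract the fixed points sitting inside its basin. The only points requiring a little care are verifying that $\overline{A_{\mathrm{t}}}$ is non-empty (which holds because $S_{\mathrm{t}}\ne\emptyset$ forces $A_{\mathrm{t}}\ne\emptyset$) and that it lies inside the basin $U$ (which is exactly the minimality inclusion $\overline{A_{\mathrm{t}}}\subset A$ combined with $A\subset U$).
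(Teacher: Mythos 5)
Your argument is correct and is essentially the same as the paper's: both deduce $\overline{A_{\mathrm{t}}}\subset A$ from the minimality of $\overline{A_{\mathrm{t}}}$ (Proposition~\ref{p.l.minimality}) and then observe that the fixed point $\overline{A_{\mathrm{t}}}$, lying inside the basin, has constant iterates and therefore must equal the strict attractor. No issues.
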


\begin{proof} 
If there are no strict attractor we are done. Otherwise assume that there is 
a strict attractor $K$. Since $K$ is a fixed point of $\mathcal{B}$, Proposition
~\ref{p.l.minimality} implies that
$\overline{A_{\mathrm{t}}}\subset K$. 
Since by definition of a strict attractor the set  $K$ attracts every compact set 
in a neighbourhood of it, the minimum fixed point $\overline{A_{\mathrm{t}}}$
is attracted by $K$. Therefore $\overline{A_{\mathrm{t}}}=K$, proving the proposition. 
\end{proof}

The following example  shows that there are 
 IFSs with $S_{\mathrm{t}}\neq \emptyset$
without 
strict attractors. In this example $\overline{A_{\mathrm{t}}}$ is stable (recall \eqref{e.stableBH}).

\begin{example}
\label{example1}{\emph{
Consider the maps $T_{1},\, T_2\colon[0,2]\to [0,2]$ 
depicted in Figure \ref{f.nconley} and
defined by
\begin{itemize}
\item
$T_{1}(x)=\frac{1}{3}x$ and 
\item
 $T_{2}\colon[0,2]\to [0,2]$ is the piecewise-linear map defined by
$T_{2}(x)=\frac{1}{3}x+\frac{2}{3}$ for $x\in[0,1]$ and $T_{2}(x)=x$ for 
$x\in [1,2]$,
\end{itemize}}}

{\emph{
  Let $\mathcal{C}$ be the standard ternary Cantor set in the interval $[0,1]$.
  We claim that 
  $S_{\mathrm{t}}\ne \emptyset$, $\overline{A_{\mathrm{t}}}=\mathcal{C}$,
and  $\mathcal{C}$ is not a strict attractor. Indeed, $\mathcal{C}$ is not a Conley attractor.
We now prove these assertions. 
}} 

{\emph{
First, as
 $T_{1}$ is a contraction $\bar 1\in S_{t}$,
 where $\bar 1$ is the sequence whose terms are all equal to $1$.}}

{\emph{
For the second assertion, consider the auxiliary
$\mathrm{IFS}(f_{1},f_{2})$
where $f_{1}=T_{1|[0,1]}$ and $f_{2}=T_{2|[0,1]}$.
Note 
that $\mathcal{C}$ is the attractor of  $\mathrm{IFS}(f_{1},f_{2})$ (see, for instance,
Example 1 in  \cite[Section 3.3]{Hu}). 
In particular, the set 
$\mathcal{C}$ is the unique fixed point of the Barnsley-Hutchinson operator 
$\mathcal{B}$ of  $\mathrm{IFS}(T_{1},T_{2})$ contained in $[0,1]$. Since $[0,1]$
is $\mathcal{B}$-invariant, by Propositions~\ref{p.existence} and \ref{p.l.minimality}  we have 
$\overline{A_{\mathrm{t}}}\subset [0,1]^* \subset[0,1]$
and therefore $\overline{A_{\mathrm{t}}}=\mathcal{C}$.}}

{\emph{To see that $\mathcal{C}$ is not a strict
attractor, just note that every open neighbourhood of $\mathcal{C}$ necessarily contains an interval 
of the form $[1,\delta)$. Since $T_{2}(x)=x$ for all $x\in [1,\delta)$ the assertion
follows.}}

  \begin{figure}[!h]
  \centering
  \begin{tikzpicture}[xscale=1.3,yscale=1.3]
      \draw[->] (1,0) -- (2,0);
    \draw[->] (0,1) -- (0,2);
    \draw[green,smooth,samples=100,domain=1.0:2.0] plot(\x,{\x});
     \draw[-] (0,0) -- (1,0);
      \draw[-] (0,0) -- (0,1);
       \draw[dashed] (2,0) -- (2,2);
       \draw[dashed] (0,2) -- (2,2);
        \draw[dashed] (0,1) -- (1,1); 
         \draw[dashed] (1,0) -- (1,1);
       \draw[green,smooth,samples=100,domain=0.0:1.0] plot(\x,{0.33333*\x+0.66666});
     \draw[-] (0,0) -- (2,0);
      \draw[-] (0,0) -- (0,2);
      \draw[blue,smooth,samples=100,domain=0.0:2.0] plot(\x,{0.33333*\x});
       \node[scale=1,left] at (0,0.66666) {$\frac{2}{3}$};
       \end{tikzpicture}
     \caption{The set $\overline{A_{\mathrm{t}}}$ is not a Conley attractor}
    \label{f.nconley}
     \end{figure}
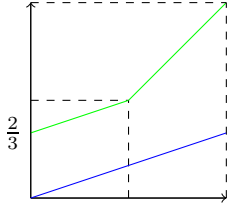
\end{example}

\subsection{Proof of Theorem~\ref{mt.local}}
\label{ss.mtlocal}
Since every strict attractor is a Conley attractor, to prove the theorem
it is enough to see that given  an $\mathrm{IFS}(T_{1},\dots T_{k})$
 such that $\overline{A_{\mathrm{t}}}$ is a non-empty
 Conley attractor then $\overline{A_{\mathrm{t}}}$ is a strict attractor.
  We need the following preparatory lemma: 
 \begin{lema}
\label{l.era14}
 Consider sequences 
 $(A_n)$ of compact sets in $\mathcal{H}(X)$ and
 $(p_n)$ of points in $X$
 with
 $A_n\to A$ and $p_n\to p$ in the  
 Hausdorff distance $d_{H}$. 
 Then 
 $$
 d (p,A)=\displaystyle\lim_{n\rightarrow \infty} d(p_{n},A_{n}).
 $$
 \end{lema}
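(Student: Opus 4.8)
The plan is to prove the continuity-type statement
$$
d(p,A)=\lim_{n\to\infty} d(p_n,A_n)
$$
by bounding the error $\abs{d(p_n,A_n)-d(p,A)}$ in terms of the two quantities $d(p_n,p)$ and $d_H(A_n,A)$, both of which tend to $0$ by hypothesis. The key tool is that the point-to-set distance function $x\mapsto d(x,B)$ is $1$-Lipschitz in $x$ for any fixed nonempty set $B$, and that perturbing the set $B$ in the Hausdorff distance perturbs $d(x,B)$ by at most $d_H$. Combining these two facts will give the result; this is a routine ``two-variable continuity'' argument.

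\medskip

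Concretely, first I would record the elementary inequality
$$
\abs{d(p,B)-d(q,B)}\le d(p,q),
$$
valid for any nonempty $B\subset X$ and any $p,q\in X$, which follows directly from the triangle inequality applied inside the infimum defining $d(\cdot,B)$. Second, I would establish the set-perturbation bound: for any fixed $x\in X$ and any nonempty compact $A,B$,
$$
\abs{d(x,A)-d(x,B)}\le d_H(A,B).
$$
This follows because for every $a\in A$ there is $b\in B$ with $d(a,b)\le d_H(A,B)$ (using $h_s(A,B)\le d_H(A,B)$), so $d(x,B)\le d(x,a)+d_H(A,B)$; taking the infimum over $a\in A$ gives $d(x,B)\le d(x,A)+d_H(A,B)$, and the symmetric argument gives the reverse inequality.

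\medskip

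With these two estimates in hand, the main step is the triangle-inequality decomposition
$$
\abs{d(p_n,A_n)-d(p,A)}
\le \abs{d(p_n,A_n)-d(p,A_n)}+\abs{d(p,A_n)-d(p,A)}
\le d(p_n,p)+d_H(A_n,A).
$$
Here the first term is controlled by the Lipschitz-in-the-point estimate (applied with the fixed set $B=A_n$) and the second by the set-perturbation estimate (applied with the fixed point $x=p$). Since $p_n\to p$ means $d(p_n,p)\to 0$ and $A_n\to A$ in $d_H$ means $d_H(A_n,A)\to 0$, the right-hand side tends to $0$, which is exactly the claimed limit.

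\medskip

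I do not anticipate a genuine obstacle here: both auxiliary inequalities are standard and the compactness of the sets guarantees all the distances and infima are attained and finite, so there are no degeneracy issues (every $A_n$ and $A$ is nonempty compact, so $d(\cdot,A_n)$ and $d(\cdot,A)$ are well defined). The only minor point worth stating carefully is that $p_n\to p$ ``in the Hausdorff distance'' for singletons is the same as convergence of the points in $X$, since $d_H(\{p_n\},\{p\})=d(p_n,p)$; I would note this at the outset so that the hypotheses are applied cleanly. The rest is the short triangle-inequality chain displayed above.
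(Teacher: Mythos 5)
Your proof is correct and follows essentially the same route as the paper: both arguments reduce the claim to the two elementary estimates $\abs{d(p,B)-d(q,B)}\le d(p,q)$ and $\abs{d(x,A)-d(x,B)}\le d_H(A,B)$ and then apply a triangle-inequality decomposition. The only cosmetic difference is that the paper runs the two one-sided inequalities separately to get $\liminf$ and $\limsup$ bounds, whereas you package them into a single absolute-value estimate.
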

 
 \begin{proof}
 We use  the following ``triangular'' inequality:
 given a point $q$ and two compact sets $A$ and $B$ it holds
 $$
 d(q,A)\le d(q,B)+ d_H(A,B).
$$ 
Consider the sequences $(A_n)$ and $(p_n)$ in the lemma. 
 Applying twice the ``triangular'' inequality
 above we get
$$
 d (p,A)\leq d(p,p_{n})+d(p_{n},A)\leq  d(p,p_{n})+d(p_{n},A_{n})+d_H(A_{n},A).
$$
By hypothesis,  $d(p,p_{n})\to 0$ and $d_{H}(A_{n},A)\to 0$.  
We conclude that 
\begin{equation}
 \label{eq.liminf}
d(p,A)\leq \liminf_{n} d(p_{n},A_{n}).
\end{equation}

Applying again twice the ``triangular'' inequality, we get
 $$ 
 d (p_{n},A_{n})\leq 
 d(p_{n},p)+d(p,A_{n})\leq d(p_{n},p)+d(p,A)+ d_H(A,A_{n}).
 $$
This implies that
\begin{equation}
 \label{eq.limsup}
 \limsup_{n} d(p_{n},A_{n})\leq d(p,A).
 \end{equation}
 Equations \eqref{eq.liminf} and \eqref{eq.limsup} imply the lemma.
 \end{proof}

 We are now ready to prove the theorem. Since $\overline{A_{\mathrm{t}}}$ is a Conley attractor it
 has an open neighbourhood  $U$  such that 
$ \mathcal{B}^{n}(\overline{U})\to\overline{A_{\mathrm{t}}}$.
To prove that
$\overline{A_{\mathrm{t}}}$ is a strict attractor  we need  to check that
for every compact set 
$K\in \mathcal{H}(\overline{U})$ it holds
$\mathcal{B}^{n}(K)\rightarrow \overline{A_{\mathrm{t}}}$. 
For that
it is enough to see that for any $\epsilon >0$ there is $n_{0}\in \mathbb{N}$ such that 
for every $n\geq n_{0}$ it holds
 \begin{equation}
 \label{e.toproveremains}
 d_{H}(\overline{A_{\mathrm{t}}},\mathcal{B}^{n}(K))=\max\{ h_{s}(\overline{A_{\mathrm{t}}},\mathcal{B}^{n}(K))
 ,  h_{s}(\mathcal{B}^{n}(K),\overline{A_{\mathrm{t}}})\}
 \leq\epsilon.
 \end{equation}

By hypothesis, $\mathcal{B}^{n}(\overline{U})\to 
\overline{A_{\mathrm{t}}}$.  Thus there is $n_{0}$
such that for every  $n\geq n_{0}$
we have
$$
 h_{s}(\mathcal{B}^{n}(\overline{U}),\overline{A_{\mathrm{t}}})
\leq\epsilon.
$$
Therefore, for every $n\ge n_0$,
$$
 h_{s}(\mathcal{B}^{n}(K),\overline{A_{\mathrm{t}}})\leq 
 h_{s}(\mathcal{B}^{n}(\overline{U}),\overline{A_{\mathrm{t}}})
\leq\epsilon.
$$
Hence  to prove \eqref{e.toproveremains} it remains to see that $
h_{s}(\overline{A_{\mathrm{t}}},\mathcal{B}^{n}(K))
\le \epsilon $ for
every $n$ sufficiently large. This is proved in the next lemma.

 \begin{lema}\label{l.atracaofatal}
For every $K\in \mathcal{H}(X)$ it holds $\lim_{n\to \infty} h_{s}(\overline{A_{\mathrm{t}}}
,\mathcal{B}^{n}(K))=0$.
\end{lema}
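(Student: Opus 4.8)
We must show that for every compact set $K\in\mathcal H(X)$ one has $h_s(\overline{A_{\mathrm t}},\mathcal B^n(K))\to 0$, i.e. that points of the target set are (uniformly) approximated by points of $\mathcal B^n(K)$ as $n\to\infty$. Note that this statement requires no attractor hypothesis on $\overline{A_{\mathrm t}}$; it is the inclusion that always holds, and it is the remaining half of the argument that upgrades a Conley attractor to a strict one.

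**Plan.** The natural approach is to unwind the definition of the one-sided Hausdorff distance: $h_s(\overline{A_{\mathrm t}},\mathcal B^n(K))=\sup_{p\in\overline{A_{\mathrm t}}} d(p,\mathcal B^n(K))$. Since $A_{\mathrm t}$ is dense in $\overline{A_{\mathrm t}}$, it suffices to control $d(p,\mathcal B^n(K))$ for $p\in A_{\mathrm t}$, so first I would fix $p\in A_{\mathrm t}$. By the characterisation \eqref{e.characterizationAt} (equivalently \eqref{e.At}), there is a weakly hyperbolic sequence $\xi\in S_{\mathrm t}$ with $\{p\}=\bigcap_n T_{\xi_0}\circ\cdots\circ T_{\xi_n}(X)$ and $\mathrm{diam}\,(T_{\xi_0}\circ\cdots\circ T_{\xi_n}(X))\to 0$. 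The key observation is that since $K\subset X$ and $K\neq\emptyset$, we have $\emptyset\neq T_{\xi_0}\circ\cdots\circ T_{\xi_n}(K)\subset T_{\xi_0}\circ\cdots\circ T_{\xi_n}(X)$, and moreover $T_{\xi_0}\circ\cdots\circ T_{\xi_n}(K)\subset\mathcal B^{n+1}(K)$ by definition of the Barnsley--Hutchinson operator. Picking any point $q_n\in T_{\xi_0}\circ\cdots\circ T_{\xi_n}(K)$, both $q_n$ and $p$ lie in the set $T_{\xi_0}\circ\cdots\circ T_{\xi_n}(X)$, so $d(p,q_n)\le\mathrm{diam}\,(T_{\xi_0}\circ\cdots\circ T_{\xi_n}(X))\to 0$, giving $d(p,\mathcal B^{n+1}(K))\to 0$ for this fixed $p$.

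**The main obstacle: uniformity.** The pointwise convergence above is easy; the real content is making it \emph{uniform} over $p\in\overline{A_{\mathrm t}}$, since $h_s$ is a supremum. The rate of contraction of $\mathrm{diam}\,(T_{\xi_0}\circ\cdots\circ T_{\xi_n}(X))$ is not assumed uniform across different sequences $\xi\in S_{\mathrm t}$, so a naive bound does not immediately yield uniformity. I expect this to be the heart of the lemma. The clean way to handle it is a compactness/contradiction argument: suppose $h_s(\overline{A_{\mathrm t}},\mathcal B^n(K))\not\to 0$; then there exist $\epsilon>0$, a subsequence $n_\ell\to\infty$, and points $p_\ell\in\overline{A_{\mathrm t}}$ with $d(p_\ell,\mathcal B^{n_\ell}(K))\ge\epsilon$. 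By compactness of $\overline{A_{\mathrm t}}$ we may assume $p_\ell\to p_\infty\in\overline{A_{\mathrm t}}$. Now Lemma~\ref{l.era14} is designed precisely for this step: the sets $\mathcal B^{n_\ell}(K)$ form a nested decreasing sequence (using $\mathcal B(K)\subset K$ after possibly replacing $K$ by an invariant set containing it, or arguing on $\mathcal B^n(K)$ directly), so by Lemma~\ref{l.semnome} they converge in $d_H$ to $\bigcap_n\mathcal B^n(K)=K^*$, and Lemma~\ref{l.era14} then gives $d(p_\infty,K^*)=\lim_\ell d(p_\ell,\mathcal B^{n_\ell}(K))\ge\epsilon$. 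But $p_\infty\in\overline{A_{\mathrm t}}\subset K^*$ (the inclusion $\overline{A_{\mathrm t}}\subset K^*$ follows from the minimality of $\overline{A_{\mathrm t}}$ in Proposition~\ref{p.l.minimality} together with the fact that $K^*$ is a fixed point containing $\overline{A_{\mathrm t}}$ by Proposition~\ref{p.existence}), so $d(p_\infty,K^*)=0$, a contradiction.

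**Remark on the nesting subtlety.** One technical point deserves care: Lemma~\ref{l.semnome} requires a \emph{nested} sequence, whereas a general compact $K$ need not satisfy $\mathcal B(K)\subset K$. I would resolve this by replacing $K$ with $\widetilde K\eqdef K\cup\mathcal B(K)\cup\mathcal B^2(K)\cup\cdots$, or more cleanly by noting that $h_s(\overline{A_{\mathrm t}},\mathcal B^n(K))$ can be compared to the invariant case: since $X$ itself is $\mathcal B$-invariant and $K\subset X$, one has $\mathcal B^n(K)\subset\mathcal B^n(X)$, and the nested sequence $\mathcal B^n(X)$ converges to $X^*$. Because every $p\in A_{\mathrm t}$ is a singleton fibre hit by the image of \emph{any} nonempty set (the pointwise argument of the first paragraph used only $K\neq\emptyset$), the uniform bound transfers to arbitrary $K$. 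I expect the write-up to favour the contradiction argument via Lemmas~\ref{l.semnome} and~\ref{l.era14} applied to the nested family generated by $K$, as these lemmas have been positioned in the text precisely to serve this proof.
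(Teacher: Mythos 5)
Your pointwise argument (fix $p\in A_{\mathrm t}$, push $K$ through the fibre of a weakly hyperbolic sequence coding $p$, and use $T_{\xi_0}\circ\cdots\circ T_{\xi_n}(K)\subset\mathcal B^{n+1}(K)$) is exactly the paper's key mechanism, and your contradiction setup ($\epsilon>0$, $p_\ell\in\overline{A_{\mathrm t}}$ with $d(p_\ell,\mathcal B^{n_\ell}(K))\ge\epsilon$, $p_\ell\to p_\infty$, then Lemma~\ref{l.era14}) is also the paper's. But the way you close the contradiction has a genuine gap. You assert that the sets $\mathcal B^{n_\ell}(K)$ are nested, converge to $K^*=\bigcap_n\mathcal B^n(K)$ by Lemma~\ref{l.semnome}, and that $\overline{A_{\mathrm t}}\subset K^*$. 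None of this is available for an arbitrary $K\in\mathcal H(X)$: the sequence $\mathcal B^n(K)$ is nested only when $\mathcal B(K)\subset K$; for non-invariant $K$ (e.g.\ a single point off the target set) the intersection $K^*$ can fail to be a fixed point of $\mathcal B$ and can even be empty, so neither Proposition~\ref{p.existence} nor the minimality of $\overline{A_{\mathrm t}}$ (Proposition~\ref{p.l.minimality}) applies to it. Your proposed repairs do not close this: replacing $K$ by an invariant superset $\widetilde K\supset K$ only enlarges $\mathcal B^n(K)$, and since $h_s(\overline{A_{\mathrm t}},B)$ \emph{decreases} as $B$ grows, a bound for $\widetilde K$ says nothing about $K$; and the final sentence of your remark (``the uniform bound transfers to arbitrary $K$'' because the pointwise argument only used $K\ne\emptyset$) is precisely the pointwise-to-uniform leap you yourself flagged as the heart of the lemma.

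The paper avoids all of this by never invoking nestedness of $\mathcal B^n(K)$: it uses compactness of $(\mathcal H(X),d_H)$ to extract a subsequence with $\mathcal B^{n_\ell}(K)\to\widehat K$ for some compact $\widehat K$ (with no identification of $\widehat K$ as $K^*$), gets $d(p^*,\widehat K)\ge\epsilon$ from Lemma~\ref{l.era14}, and then derives the contradiction directly from your own first paragraph: choosing $q\in A_{\mathrm t}\cap B_{\epsilon/2}(p^*)$ with fibre shrinking to $\{q\}$, the images $T_{\omega_0}\circ\cdots\circ T_{\omega_{m-1}}(K)\subset\mathcal B^m(K)$ meet $B_{\epsilon/2}(p^*)$ for \emph{all} large $m$, hence for all large $n_\ell$; any such intersection point $z$ is within $\epsilon/2$ of both $p^*$ and $\widehat K$, contradicting $d(p^*,\widehat K)\ge\epsilon$. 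If you replace your nestedness/$K^*$ step by this compactness extraction and feed your pointwise argument into the limit set $\widehat K$ rather than into $K^*$, your proof becomes the paper's.
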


\begin{proof}
The proof is by contradiction. Assume 
 that there are a compact set $K\in \mathcal{H}(X)$ and a sequence $(n_\ell)$ such that
$h_{s}(\overline{A_{\mathrm{t}}},\mathcal{B}^{n_{\ell}}(K))> \epsilon$ for every
$\ell$. 
Note that for each $\ell$ 
there is $p_{n_\ell}\in \overline{A_{\mathrm{t}}}$ with $d(p_{n_\ell},\mathcal{B}^{n_\ell}(K))> \epsilon$. 
By compactness we can assume that
$p_{n_{\ell}}\rightarrow p^{*}\in \overline{A_{\mathrm{t}}}$ 
and that $\mathcal{B}^{n_{\ell}}(K)\rightarrow  \widehat{K}$.
By Lemma~\ref{l.era14}, 
\begin{equation}
\label{e.acontradicao}
d(p^{*},\widehat{K})\geq \epsilon.
\end{equation}

We now derive a contradiction from this inequality.
By construction, there is 
 $\ell_{0}$ such that 
\begin{equation}
\label{e.byconstruction}
h_{s}(B^{n_{\ell}}(K),\widehat{K})<\frac{\epsilon}{2},
\quad
\mbox{for all $\ell\geq \ell_0$.}
\end{equation}
Take $q\in B_{\frac{\epsilon}{2}}(p^{*})\cap A_{\mathrm{t}}$ and note that
there is a sequence $\omega=\omega_{0}\
\omega_{1}\ldots\in S_\mathrm{t}$ such that 
$$
\bigcap_{n\geq 0} T_{\omega_{0}} \circ\cdots\circ T_{\omega_{n}}(X)=\{q\}.
$$
Therefore there is $m_{0}$  such that
$$
T_{\omega_{0}} \circ\cdots\circ T_{\omega_{m-1}}(K)\subset
T_{\omega_{0}} \circ\cdots\circ T_{\omega_{m-1}}(X)\subset B_{\frac{\epsilon}{2}}(p^{*})
\quad
\mbox{for every  $m\geq m_{0}$}.
$$
Since $T_{\omega_{0}} \circ\cdots\circ T_{\omega_{m-1}}(K)\subset  \mathcal{B}^{m}(K)$,
for every  $\ell$ big enough we have 
$\mathcal{B}^{n_{\ell}}(K)\cap B_{\frac{\epsilon}{2}}(p^{*})\neq \emptyset$. 

Note that for every $\ell$ sufficiently large 
$\mathcal{B}^{n_{\ell}}(K)\cap B_{\frac{\epsilon}{2}}(p^{*})\ne \emptyset$ and equation 
\eqref{e.byconstruction} holds.
Hence for every
$z\in \mathcal{B}^{n_{\ell}}(K)\cap B_{\frac{\epsilon}{2}}(p^{*})$
we have $d (z,\widehat{K})<\frac{\epsilon}{2}$ and $d(z,p^{*})<\frac{\epsilon}{2}$.
Hence $d(p^{*},\widehat{K})<\epsilon$ contradicting \eqref{e.acontradicao}. 
This ends the proof of the lemma.
\end{proof}

The proof of the theorem is now complete.
\hfil \qed

\begin{schol}\label{sc.atracaofatal2}
If $U$ is a neighbourhood of  $\overline{A_{\mathrm{t}}}$ such that
$\mathcal{B}^n(\overline{U}) \to  \overline{A_{\mathrm{t}}}$ then every compact subset
of $\overline{U}$ also satisfies $\mathcal{B}^n(K) \to  \overline{A_{\mathrm{t}}}$.
\end{schol}
 
 We have the following corollary that allows us to stablish a   
 connection between  the set $\overline{A_{\mathrm{t}}}$ and  semifractals.
 
\begin{coro}\label{c.l.atractinside}
Consider an IFS such that $S_{t}\neq \emptyset$. Then 
$$
\lim_{n\to \infty}\mathcal{B}^{n}(K)= \overline{A_{\mathrm{t}}}, \quad 
\mbox{for every 
compact set $K\subset \overline{A_{\mathrm{t}}}$.}
$$
\end{coro}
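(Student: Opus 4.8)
The plan is to reduce the claimed Hausdorff convergence to the one-sided estimate already established in Lemma~\ref{l.atracaofatal}, exploiting the fact that $\overline{A_{\mathrm{t}}}$ is a fixed point of $\mathcal{B}$ together with the monotonicity of $\mathcal{B}$ for inclusion. Recall that $d_H(\overline{A_{\mathrm{t}}},\mathcal{B}^n(K))=\max\{h_s(\overline{A_{\mathrm{t}}},\mathcal{B}^n(K)),\,h_s(\mathcal{B}^n(K),\overline{A_{\mathrm{t}}})\}$, so it suffices to control the two one-sided quantities separately.

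First I would observe that the Barnsley--Hutchinson operator $\mathcal{B}$ is monotone: if $K\subset L$ then $\mathcal{B}(K)=\bigcup_{i} T_i(K)\subset \bigcup_{i} T_i(L)=\mathcal{B}(L)$. Since $S_{\mathrm{t}}\neq\emptyset$, Proposition~\ref{p.l.minimality} gives $\mathcal{B}(\overline{A_{\mathrm{t}}})=\overline{A_{\mathrm{t}}}$. Hence, for any compact $K\subset\overline{A_{\mathrm{t}}}$, monotonicity and an immediate induction yield $\mathcal{B}^n(K)\subset\mathcal{B}^n(\overline{A_{\mathrm{t}}})=\overline{A_{\mathrm{t}}}$ for every $n$. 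Consequently every point of $\mathcal{B}^n(K)$ already lies in $\overline{A_{\mathrm{t}}}$, so that $h_s(\mathcal{B}^n(K),\overline{A_{\mathrm{t}}})=\sup_{a\in \mathcal{B}^n(K)} d(a,\overline{A_{\mathrm{t}}})=0$ for all $n$.

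For the reverse direction I would invoke Lemma~\ref{l.atracaofatal} directly, applied to this same $K$, which asserts $\lim_{n\to\infty} h_s(\overline{A_{\mathrm{t}}},\mathcal{B}^n(K))=0$ for \emph{every} $K\in\mathcal{H}(X)$. Combining the two estimates gives $d_H(\overline{A_{\mathrm{t}}},\mathcal{B}^n(K))=h_s(\overline{A_{\mathrm{t}}},\mathcal{B}^n(K))\to 0$, which is precisely the asserted convergence $\mathcal{B}^n(K)\to\overline{A_{\mathrm{t}}}$.

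I expect no serious obstacle here: the substantive work — the contradiction argument producing, near an arbitrary point of $\overline{A_{\mathrm{t}}}$, a weakly hyperbolic sequence $\omega\in S_{\mathrm{t}}$ whose fibre forces $\mathcal{B}^{n_\ell}(K)$ to meet a small ball around that point — was already carried out in Lemma~\ref{l.atracaofatal}. The only point requiring care is the forward invariance $\mathcal{B}^n(K)\subset\overline{A_{\mathrm{t}}}$, which makes the second one-sided distance vanish identically; this rests on the fixed-point property from Proposition~\ref{p.l.minimality} together with monotonicity. Given these, the corollary follows at once.
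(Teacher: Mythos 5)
Your proposal is correct and is exactly the argument the paper intends: the paper's proof is the one-line remark that the corollary is ``an immediate consequence of Lemma~\ref{l.atracaofatal} and the invariance of $\overline{A_{\mathrm{t}}}$,'' and you have simply spelled out that invariance plus monotonicity kills the term $h_{s}(\mathcal{B}^{n}(K),\overline{A_{\mathrm{t}}})$ while the lemma handles $h_{s}(\overline{A_{\mathrm{t}}},\mathcal{B}^{n}(K))$. No gaps.
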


\begin{proof}
The statement is an immediate consequence of Lemma~\ref{l.atracaofatal} and the invariance of 
$\overline{A_{\mathrm{t}}}$.
\end{proof}
 
 \begin{remark}\label{r.putting}{\em{
 Combining  Propositions~\ref{p.l.minimality} and
 Corollary
\ref{c.l.atractinside} one gets the following:
  if $S_{\mathrm{t}}\ne \emptyset$ then
   set $\overline{A_{\mathrm{t}}}$ is a minimum fixed point that
attracts every compact set inside it.}}
\end{remark}

   


 

 
 



 \subsection{Proof of Theorem~\ref{mt.att}} 
 \label{ss.attractor}
Suppose that the set $A_{\mathrm{t}}$ is non-empty. We need to prove the equivalence of
the  following three assertions: 
\begin{enumerate}
 \item 
$\overline{A_\mathrm{t}}=X^*$;
 \item
the Barnsley-Hutchinson operator $\mathcal{B}$ has a unique fixed point;
\item
$X^{*}$ is a global attractor
(a strict attractor whose basin is the whole space).
\end{enumerate}

The equivalence $1\Leftrightarrow 2$ follows immediately
from Proposition~\ref{p.l.minimality} (``the minimum fixed point $\overline{A_\mathrm{t}}$ is equal to
the maximum fixed point $X^*$'').

The implication $3 \Rightarrow 2$ follows noting that if $K$ is a fixed point
of $\mathcal B$ and since $X^*$ is a global attractor then  $K=\mathcal{B}^n(K)\to X^*$ and thus $K=X^*$.

To prove 
 $1\Rightarrow 3$ note that, by Lemma~\ref{l.semnome}, $X^{*}=\lim_{n\to\infty}\mathcal{B}^{n}(X)$ and thus $X^*$ is a Conley attractor. Then if $\overline{A_{t}}=X^{*}$  we 
 have that $\overline{A_{\mathrm{t}}}$ is a Conley attractor, by Theorem \ref{mt.local}
 and Scholium ~\ref{sc.atracaofatal2} this set is
 a strict attractor whose basin is the whole space.
 \hfil \qed

  \subsection{Structure of set $A_{\mathrm{t}}$} 
  \label{ss.structure}
The main result of this section is Proposition~\ref{p.dichotomy}
about the topological structure of the target set $A_{\mathrm{t}}$. This result will be used in 
Section~\ref{ss.separability}.

We begin by observing that, in general, the set $A_{\mathrm{t}}$ is not necessarily closed.
The IFS in Example \ref{example1}  illustrates this case.
In this example $\overline{A_{\mathrm{t}}}$ is the 
ternary Cantor set $\mathcal{C}$ in $[0,1]$, thus $1\in  \overline{A_{\mathrm{t}}}$.
We claim that
$1\not\in \mathrm{A_{t}}$ and thus $\mathrm{A_{t}}$ is not closed.  
Recall the definitions of $\mathrm{IFS}(T_{1},T_{2})$ and $\mathrm{IFS}(f_{1},f_{2})$ in this example 
and consider 
their  natural associated  projections 
 $\pi_{T}$ and $\pi_f$, see \eqref{e.pi}.
Arguing by contradiction, if $1\in A_{\mathrm{t}}$ then there is a sequence $\xi\in S_{\mathrm{t}}$ with 
 $\pi_{T}(\xi)=1$. In this case we also have $\pi_{f}(\xi)=1$. 
It is easy to check that  $\xi=\bar 2$ and that
$\bar 2\not\in S_{\mathrm{t}}$, where $\bar 2=(\xi_i=2)$. This gives a contradiction.

\begin{prop}\label{p.dichotomy}
Consider  
 $\mathrm{IFS}(T_1,\dots, T_k)$  defined on a compact set $X$
such that $A_{\mathrm{t}}\neq \emptyset$. 

\begin{enumerate}
 \item Assume that the IFS is injective in  $A_{\mathrm{t}}$. Then
 either 
 $A_{\mathrm{t}}$ is a singleton
 or $A_{\mathrm{t}}$ has no isolated points (thus it is infinite).  
 \item
Assume that the maps
$T_{i}$ are open.
 Then either $A_{\mathrm{t}}$ 
has empty interior
or $\mathrm{int}(A_{\mathrm{t}})\subset A_{\mathrm{t}}\subset\overline{\mathrm{int}(A_{\mathrm{t}})}$.
\end{enumerate}
\end{prop}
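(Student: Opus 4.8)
The plan is to prove each dichotomy by a single unifying mechanism: exploiting the relation $T_i(A_{\mathrm{t}})\subset A_{\mathrm{t}}$ from Scholium~\ref{sc.Atinv} together with the key ``pulling back'' property of weakly hyperbolic sequences, namely that for any $x\in A_{\mathrm{t}}$ with $\{x\}=\bigcap_n T_{\xi_0}\circ\cdots\circ T_{\xi_n}(X)$ the maps $T_{\xi_0}\circ\cdots\circ T_{\xi_{n-1}}$ contract the whole space into arbitrarily small neighbourhoods of $x$. This means every point of $A_{\mathrm{t}}$ is approachable by images of the entire target set under long compositions, and these images have diameter tending to $0$. I expect this contraction-to-a-point phenomenon to be the engine behind both ``no isolated points'' and the interior statement.

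For part (1), I would argue by contradiction: suppose $A_{\mathrm{t}}$ has an isolated point $x$ but is not a singleton. Pick the sequence $\xi\in S_{\mathrm{t}}$ coding $x$, so that $T_{\xi_0}\circ\cdots\circ T_{\xi_{n-1}}(A_{\mathrm{t}})$ shrinks to $\{x\}$ as $n\to\infty$; since $A_{\mathrm{t}}$ has at least two points and the IFS is injective on $A_{\mathrm{t}}$, each finite composition $T_{\xi_0}\circ\cdots\circ T_{\xi_{n-1}}$ restricted to $A_{\mathrm{t}}$ is injective, hence its image $T_{\xi_0}\circ\cdots\circ T_{\xi_{n-1}}(A_{\mathrm{t}})$ contains at least two distinct points of $A_{\mathrm{t}}$ (using $T_i(A_{\mathrm{t}})\subset A_{\mathrm{t}}$). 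For $n$ large this image lies inside any prescribed neighbourhood of $x$, producing points of $A_{\mathrm{t}}$ arbitrarily close to $x$ and distinct from it, contradicting isolation. The injectivity hypothesis is exactly what guarantees that the non-triviality of $A_{\mathrm{t}}$ survives the compositions; without it a composition could collapse $A_{\mathrm{t}}$ to the single point $x$.

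For part (2), suppose $\mathrm{int}(A_{\mathrm{t}})\neq\emptyset$ and let $V$ be a nonempty open set with $V\subset A_{\mathrm{t}}$. The inclusion $\mathrm{int}(A_{\mathrm{t}})\subset A_{\mathrm{t}}$ is trivial, so the content is $A_{\mathrm{t}}\subset\overline{\mathrm{int}(A_{\mathrm{t}})}$, i.e. every point of $A_{\mathrm{t}}$ is a limit of interior points. Fix $x\in A_{\mathrm{t}}$ with coding sequence $\xi$. Because the $T_i$ are open maps, each composition $T_{\xi_0}\circ\cdots\circ T_{\xi_{n-1}}$ is open, so $T_{\xi_0}\circ\cdots\circ T_{\xi_{n-1}}(V)$ is an open subset of $X$; and by Scholium~\ref{sc.Atinv} it is contained in $A_{\mathrm{t}}$, hence contained in $\mathrm{int}(A_{\mathrm{t}})$. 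Since these open sets have diameter going to $0$ and squeeze down onto $\{x\}$, they furnish interior points of $A_{\mathrm{t}}$ converging to $x$, giving $x\in\overline{\mathrm{int}(A_{\mathrm{t}})}$.

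The main obstacle I anticipate is verifying carefully that the compositions genuinely map the relevant set \emph{into} a shrinking neighbourhood of $x$ and, for part (1), that the two points produced remain distinct after applying the composition. The first point follows from the definition of $S_{\mathrm{t}}$ and the fact that $\mathrm{diam}\big(T_{\xi_0}\circ\cdots\circ T_{\xi_{n-1}}(X)\big)\to 0$ forces $T_{\xi_0}\circ\cdots\circ T_{\xi_{n-1}}(X)\subset B_\delta(x)$ for large $n$; the second relies on injectivity on $A_{\mathrm{t}}$. A subtle point worth stating explicitly is that one should apply the compositions $T_{\xi_0}\circ\cdots\circ T_{\xi_{n-1}}$ (length $n$) rather than length $n+1$, so that the images still contain $x$ itself in the limit and one may invoke the diameter-shrinking property uniformly on all of $X$.
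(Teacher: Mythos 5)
Your proposal is correct and follows essentially the same route as the paper's proof: both parts rest on the invariance $T_i(A_{\mathrm{t}})\subset A_{\mathrm{t}}$ (Scholium~\ref{sc.Atinv}) together with the fact that for $x\in A_{\mathrm{t}}$ the sets $T_{\xi_0}\circ\cdots\circ T_{\xi_n}(X)$ shrink into any prescribed neighbourhood of $x$, using injectivity on $A_{\mathrm{t}}$ to keep two distinct points in part (1) and openness of the maps to produce nonempty open subsets of $A_{\mathrm{t}}$ near $x$ in part (2). The only differences are cosmetic (contradiction versus direct phrasing, applying the composition to a fixed open $V\subset\mathrm{int}(A_{\mathrm{t}})$ rather than to $\mathrm{int}(A_{\mathrm{t}})$ itself).
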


We observe that in the proof of the first item of the proposition we only use the injectivity
of the maps $T_{i}$ on $A_{\mathrm{t}}$.

Let us also observe that if the maps $T_{i}$ are not injective then  
the set $A_{\mathrm{t}}$ can be finite with more than one element. The maps
depicted  in Figure \ref{f.figure11} 
give an example of this case, 
where 
$A_{\mathrm{t}}=\{0,\frac{1}{2},1\}$.
  
  \begin{figure}[!h]
  \centering
  \begin{tikzpicture}[xscale=2,yscale=2]
     \draw[->] (0,0) -- (1,0);
      \draw[->] (0,0) -- (0,1);
       \draw[green,smooth,samples=100,domain=0.0:0.5] plot(\x,{\x+0.5});
       \draw[green,smooth,samples=100,domain=0.5:1.0] plot(\x,{1});
      \draw[blue,smooth,samples=100,domain=0.5:1.0] plot(\x,{\x-0.5});
       \draw[blue,smooth,samples=100,domain=0.0:0.5] plot(\x,{0});
       \node[scale=1, right] at (1,0.5) {$\frac{1}{2}$};
       \node[scale=1, below] at (0.5,0) {$\frac{1}{2}$};
       \end{tikzpicture}
     \caption{$\#(A_{\mathrm{t}})=3$} \label{f.figure11}
  \end{figure}


\begin{remark}{\em{
Every injective $\mathrm{IFS}(T_{1},\dots ,T_{k})$ defined on $[0,1]$ 
satisfies
the hypotheses in the second part of
Proposition~\ref{p.dichotomy}. 
}}
\end{remark}

\begin{proof}[Proof of Proposition~\ref{p.dichotomy}]
We prove the first item in the proposition. If $A_{\mathrm{t}}$ is a singleton we are done. 
Otherwise $\# (A_{\mathrm{t}})\geq 2$.
To  see that every $p\in A_{\mathrm{t}}$ is not isolated we check 
that for every 
every neighbourhood $V$ of $p$ the set $A_{\mathrm{t}}\cap V$
contains at least two points.
By definition of $A_{\mathrm{t}}$, there is a finite 
sequence $\xi_{0}\ldots \xi_{n}$ such that 
$$
T_{\xi_{0}}\circ\cdots\circ T_{\xi_{n}}(X)\subset V.
$$
In particular, 
$$
T_{\xi_{0}}\circ\cdots\circ T_{\xi_{n}}(A_{\mathrm{t}})\subset V.
$$
Since $T_{\xi_{0}}\circ\cdots\circ T_{\xi_{n}}(A_{\mathrm{t}})\subset A_{\mathrm{t}}$ 
(recall Scholium~\ref{sc.Atinv})
and  $T_{\xi_{0}}\circ\cdots\circ T_{\xi_{n}}$ is one-to-one in $A_{t}$, 
we  have that $V$ contains at least two points of $A_{t}$,
proving the first part of the proposition.

We now prove the second item of the proposition.
If $A_{\mathrm{t}}$ has empty interior we are done. 
Thus we can  assume
that $\mathrm{int}(A_{\mathrm{t}})\ne \emptyset$. Since 
$\mathrm{int}({A_{\mathrm{t}}})\subset {A_{\mathrm{t}}}$
it only remains to see 
that $A_{\mathrm{t}}\subset\overline{\mathrm{int}(A_{\mathrm{t}})}$.
Take a point $x\in A_{\mathrm{t}}$ and any open neighbourhood $V$ of $x$. 
By definition of  $A_{\mathrm{t}}$ there is a finite sequence $\xi_{0}\dots \xi_{n}$
such that 
$$
T_{\xi_{0}}\circ\cdots\circ T_{\xi_{n}}(X)\subset V.
$$
By $T_i ({A_{\mathrm{t}}})\subset {A_{\mathrm{t}}}$ it follows
$$
T_{\xi_{0}}\circ\cdots\circ T_{\xi_{n}}(\mathrm{int}(A_{\mathrm{t}}))\subset V\cap A_{\mathrm{t}}.
$$
Since $\mathrm{int}(A_{\mathrm{t}})$ is an open set and the maps $T_i$ are open, then
 $T_{\xi_{0}}\circ\cdots\circ T_{\xi_{n}}(\mathrm{int}(A_{\mathrm{t}}))$ is a
 non-empty and open
 subset of $V \cap A_{\mathrm{t}}$, thus 
 $V\cap \mathrm{int}(A_{\mathrm{t}})\neq \emptyset$.
 Since this holds for every neighbourhood of $x$ we get that 
$x\in \overline{\mathrm{int}(A_{\mathrm{t}})}$.
The proof of the proposition is now complete.
\end{proof}

\subsection{Proof of Theorem~\ref{mt.jogodocaos}} 
\label{ss.mtjogo}
%
%
%
%
%
Suppose that $\overline{A_{\mathrm{t}}}$ is stable.
We need to prove that given any disjunctive sequence $\xi$ and any point $x$
it holds 
$$
\overline{A_{\mathrm{t}}}=\bigcap_{\ell \geq 0} \overline{\{x_{n,\xi}\colon n\geq \ell\}},
\quad \mbox{where}\quad  x_{n,\xi}\eqdef T_{\xi_{n}}\circ\dots \circ T_{\xi_{0}}(x).
$$
To simplify notation write 
$$
Y_\ell\eqdef \{x_{n,\xi}\colon n\geq \ell\}.
$$

For the inclusion ``$\subset$'' take any point $p\in A_{\mathrm{t}}$ and fix $\ell\geq 0$. We need to see 
that for every  neighbourhood $V$ of $p$ it holds
\begin{equation}\label{e.inclusion0205}
V \cap Y_\ell\ne \emptyset.
\end{equation}
By definition of $A_{\mathrm{t}}$ there is a finite sequence 
$c_{0}\dots c_{r}$ such that 
\begin{equation}\label{e.abril16}
T_{c_{r}}\circ \dots \circ T_{c_{0}}(X)\subset V.
\end{equation}
 We can  assume that $r\geq \ell$. Since
  $\xi$ has dense orbit there is $m_{1}$ such that 
  $$
  \xi_{m_{1}}=c_{0},\, \xi_{m_{1}+1}=c_{1},\,
  \dots,\,\xi_{m_{1}+r}=c_{r}.
 $$ 
   Therefore, from \eqref{e.abril16} it follows 
   $$
  x_{m_{1}+r,\xi}=T_{\xi_{m_{1}+r}}\circ\dots\circ T_{\xi_{m_{1}}}
  \circ T_{\xi_{m_{1}-1}}\circ \dots\circ T_{\xi_{0}}(x)\in V.
  $$
 Since $m_{1}+r\geq \ell$ we have that $V\cap Y_\ell 
 \neq \emptyset$, proving 
 \eqref{e.inclusion0205}. 
 
 We now prove the inclusion ``$\subset$''.
  Take any neighbourhood $V$ of $\overline{A_{\mathrm{t}}}$. Since 
$\overline{A_{\mathrm{t}}}$ is stable it has a neighbourhood $V_{0}\subset V$
such that $\mathcal{B}^{n}(V_{0})\subset V$ for every $n\geq 0$.
Since $\xi$ is a disjunctive sequence and $A_{\mathrm{t}}\subset V_{0}$ 
there is $n_{0}\in \mathbb{N}$ such that $x_{n_{0},\xi}\in V_{0}$.
Hence 
$Y_{n_0} \subset V$ and thus
 $\overline{Y_{n_0}}\subset \overline{V}$.
 As the sequence of sets 
 $(\overline{Y_\ell})$
 is nested,  we have that 
 $
 \bigcap_{\ell \geq 0} \overline{Y_\ell}\subset \overline{V}.
 $
 Since this holds for every neighbourhood $V$ of $\overline{A_{\mathrm{t}}}$ we conclude that 
  $$
 \bigcap_{\ell \geq 0} \overline{Y_\ell}
 \subset \overline{A_{\mathrm{t}}} .
 $$
  Finally,
 as $\overline{Y_\ell}$ is a nested sequence 
 of
  compact sets, from Lemma~\ref{l.semnome} and the definition of a Hausdorff limit, 
  it follows
 $Y_\ell {\longrightarrow}  \overline{A_{\mathrm{t}}}$,
 where the convergence is in 
the Hausdorff distance.
 \hfil 
\qed

\section{Measure and rigidity of $S_{\mathrm{t}}$ for IFSs on $[0,1]$}
\label{s.asstableifs01}
In this section,
for
IFSs 
defined on $[0,1]$,
 we study the measure of   $S_{\mathrm{t}}$ for Markov measures
and prove 
Theorem~\ref{mt.abundance}, see Section~\ref{ss.mtabundance}. In Section~\ref{ss.prigidtyst}
we prove a result about probabilistic rigidity of the set $S_{\mathrm{t}}$:
under quite general conditions,   if  $S_{\mathrm{t}}$ intersects the support of a Markov measure
it has full probability. Finally, in Section~\ref{ss.separability} we characterise separable IFSs. 
 
\subsection{Proof of Theorem~\ref{mt.abundance}}
\label{ss.mtabundance}
Given an $\mathrm{IFS}(T_{1},\dots T_{k})$ defined on $I=[0,1]$ we need to see that
every
mixing Markov measure that splits the IFS in some non-trivial closed interval $J$
satisfies $\mathbb{P}^{+}(S_{\mathrm{t}})=1$.

Recall the definition
 of the fibre $I_\xi$ of a sequence in $\Si_k^+$ in \eqref{spine}.
Given $x\in [0,1]$ we consider  the set of sequences whose fibres contain $x$ defined by 
\begin{equation}\label{e.sigmax}
\Sigma_x\eqdef
\{\xi\in \Sigma_k^+ \colon x\in I_\xi\}.
\end{equation}
 
  \begin{lema}\label{l.lemasuficiente}
 Suppose that $\mathbb{P}^+(\Sigma_x)=0$ for all $x\in [0,1]$.  Then
 $\mathbb{P}^+(S_{\mathrm{t}})=1$.
 \end{lema}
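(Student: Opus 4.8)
The plan is to bound the complement $S_{\mathrm{t}}^{c}=\Sigma_k^+\setminus S_{\mathrm{t}}$ by a \emph{countable} union of the null sets $\Sigma_x$, thereby reducing the uncountable family $\{\Sigma_x\}_{x\in[0,1]}$ supplied by the hypothesis to the subfamily indexed by the rationals. The structural fact that makes this possible is that, since $X=[0,1]$, every fibre $I_\xi$ is a closed interval: each image $T_{\xi_0}\circ\cdots\circ T_{\xi_n}(X)$ is a continuous image of a compact connected set, hence a compact interval, and these intervals are nested and decreasing, so $I_\xi=\bigcap_n T_{\xi_0}\circ\cdots\circ T_{\xi_n}(X)$ is again a closed interval (possibly degenerate). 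Consequently $\xi\in S_{\mathrm{t}}$ precisely when $I_\xi$ is a single point, and $\xi\notin S_{\mathrm{t}}$ precisely when $I_\xi$ is a nondegenerate interval.

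First I would record the crucial inclusion. If $\xi\notin S_{\mathrm{t}}$ then $I_\xi$ is a nondegenerate interval, so by density of the rationals it contains some $q\in\mathbb{Q}\cap[0,1]$; by the definition of $\Sigma_x$ in \eqref{e.sigmax} this says exactly $\xi\in\Sigma_q$. Hence
$$
S_{\mathrm{t}}^{c}\subseteq\bigcup_{q\in\mathbb{Q}\cap[0,1]}\Sigma_q.
$$
Since $\mathbb{Q}\cap[0,1]$ is countable and $\mathbb{P}^{+}(\Sigma_q)=0$ for every $q$ by hypothesis, countable subadditivity yields $\mathbb{P}^{+}\big(\bigcup_q\Sigma_q\big)=0$, and therefore $\mathbb{P}^{+}(S_{\mathrm{t}}^{c})=0$, i.e. $\mathbb{P}^{+}(S_{\mathrm{t}})=1$.

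Before invoking this inclusion at the level of measures I would settle the (routine) measurability. Each set $\Sigma_x^{n}\eqdef\{\xi\colon x\in T_{\xi_0}\circ\cdots\circ T_{\xi_n}(X)\}$ depends only on $\xi_0,\dots,\xi_n$, so it is a finite union of cylinders and in particular Borel; hence $\Sigma_x=\bigcap_n\Sigma_x^{n}$ is Borel. Likewise the function $\xi\mapsto\mathrm{diam}\big(T_{\xi_0}\circ\cdots\circ T_{\xi_n}(X)\big)$ is locally constant, so its decreasing pointwise limit $\xi\mapsto\mathrm{diam}(I_\xi)$ is Borel and $S_{\mathrm{t}}=\{\xi\colon\mathrm{diam}(I_\xi)=0\}$ is Borel. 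This guarantees that the displayed inclusion lies between measurable sets and that monotonicity of $\mathbb{P}^{+}$ applies.

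I do not expect a genuine obstacle here: the only substantive idea is the reduction to rationals. The hypothesis controls each $\Sigma_x$ separately, and it is the geometry of $[0,1]$—interval-valued fibres together with the density of $\mathbb{Q}$—that converts ``every non-singleton fibre is detected by some $x$'' into ``every non-singleton fibre is detected by some rational $q$'', so that the relevant index set stays countable. The remaining ingredients (connectedness of continuous images, nestedness of the defining intervals, and countable subadditivity) are entirely routine.
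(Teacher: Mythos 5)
Your proof is correct and follows exactly the paper's argument: the complement of $S_{\mathrm{t}}$ is covered by the countable family $\{\Sigma_q\}_{q\in\mathbb{Q}\cap[0,1]}$ because every non-trivial fibre $I_\xi$ is an interval and hence contains a rational, and countable subadditivity finishes the proof. The additional measurability check you include is a harmless (and reasonable) elaboration that the paper leaves implicit.
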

  
 \begin{proof}
 Note that if
$\xi\not\in S_\mathrm{t}$ then its fibre $I_\xi$ is a 
non-trivial interval and hence contains 
a rational point. This implies that
$$
( S_{\mathrm{t}})^c=
\Sigma_{k}^{+}\setminus S_{\mathrm{t}}\subset \bigcup_{x\in \mathbb{Q}\cap [0,1]}\Sigma_{x}.
$$
This union is countable and each set $\Sigma_x$ satisfies $\mathbb{P}^{+}(\Si_x)=0$, thus
$\mathbb{P}^{+}(S_{\mathrm{t}})=1$.
 \end{proof}
 
 In view of Lemma~\ref{l.lemasuficiente}, to see that $\mathbb{P}^+(S_{\mathrm{t}})=1$ it is sufficient to
 show the following:
 
 \begin{teo}
 \label{t.p.usadoAtdois}
Consider an
$\mathrm{IFS}$ defined on $I=[0,1]$ and a mixing Markov measure
$\mathbb{P}^{+}$ that splits the $\mathrm{IFS}$ in some non-trivial interval $J$. 
Then  $\mathbb{P}^+(\Sigma_x)=0$ for all $x\in [0,1]$.
\end{teo}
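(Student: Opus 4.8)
The plan is to show that the measures of the ``slices''
$C_n(x)\eqdef\{\xi\in\Sigma_k^+\colon x\in T_{\xi_0}\circ\cdots\circ T_{\xi_n}(I)\}$
decay geometrically in $n$, uniformly in $x$. For a finite word $v=v_0\dots v_n$ write $T_v\eqdef T_{v_0}\circ\cdots\circ T_{v_n}$. Each $C_n(x)$ is a finite union of length-$(n+1)$ cylinders $[v]$ with $x\in T_v(I)$, and from \eqref{spine} and \eqref{e.sigmax} one checks (using that the sets $T_{\xi_0}\circ\cdots\circ T_{\xi_n}(I)$ are nested and compact) that $\Sigma_x=\bigcap_{n\ge0}C_n(x)$; hence $\mathbb{P}^+(\Sigma_x)=\lim_n\mathbb{P}^+(C_n(x))$. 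Moreover the collection of words $v$ with $x\in T_v(I)$ is prefix-closed: if $v=v'c$ then $T_v(I)\subset T_{v'}(I)$, so $x\in T_v(I)$ forces $x\in T_{v'}(I)$. Thus it suffices to produce $L\in\NN$ and $\delta>0$, independent of $x$, with $\mathbb{P}^+(C_{n+L}(x))\le(1-\delta)\,\mathbb{P}^+(C_n(x))$ for every $n$.

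The heart of the argument is a uniform \emph{escape lemma}: there are $L\in\NN$ and $\delta>0$ such that for every $x\in[0,1]$ and every admissible word $v$ with $x\in T_v(I)$ there is an admissible continuation $z$, $|z|\le L$, with $\mathbb{P}^+([vz])\ge\delta\,\mathbb{P}^+([v])$ and $x\notin T_{vz}(I)$. To prove it, let $u=i_1\dots i_\ell$ and $w=j_1\dots j_s$ be the admissible words provided by the splitting (Definition~\ref{d.splits}), with common first symbol $a\eqdef i_1=j_1$, $T_u(I)\cap T_w(I)=\emptyset$ and $T_u(I)\cup T_w(I)\subset J$. Since $\mathbb{P}^+$ is mixing its transition matrix $P$ is primitive (Section~\ref{sss.inversemarkov}), so there is $N_0$ with $P^{N_0}$ strictly positive; hence from the last symbol of $v$ there is an admissible path $v_n=c_0\to c_1\to\cdots\to c_{N_0}=a$. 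Put $\tau\eqdef c_1\dots c_{N_0-1}$, so that both $v\tau u$ and $v\tau w$ are admissible. Because each $T_i$ maps $J$ into $J$ and is injective there, the composition $T_{v\tau}$ is injective on $J\supset T_u(I)\cup T_w(I)$, whence
$$
T_{v\tau u}(I)\cap T_{v\tau w}(I)=T_{v\tau}\big(T_u(I)\cap T_w(I)\big)=\emptyset .
$$
Consequently $x$ lies in at most one of these two sets, and choosing $z\eqdef\tau u$ or $z\eqdef\tau w$ accordingly gives $x\notin T_{vz}(I)$. Finally $|z|\le(N_0-1)+\max\{\ell,s\}\eqdef L$, and since every transition along $z$ is admissible, $\mathbb{P}^+([vz])/\mathbb{P}^+([v])\ge p_{\min}^{\,L}\eqdef\delta>0$, where $p_{\min}$ is the least positive entry of $P$.

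With the lemma in hand the decay follows. Fix $n$ and let $[v]$, $|v|=n+1$, range over the cylinders comprising $C_n(x)$. For each such $v$ the word $vz_v$ furnished by the lemma satisfies $x\notin T_{vz_v}(I)$, so by prefix-closedness $[vz_v]\cap C_{n+L}(x)=\emptyset$; as $[vz_v]\subset[v]$ and $\mathbb{P}^+([vz_v])\ge\delta\,\mathbb{P}^+([v])$, we obtain $\mathbb{P}^+(C_{n+L}(x)\cap[v])\le(1-\delta)\,\mathbb{P}^+([v])$. Summing over the disjoint cylinders $[v]$ gives $\mathbb{P}^+(C_{n+L}(x))\le(1-\delta)\,\mathbb{P}^+(C_n(x))$, and iterating over multiples of $L$ yields $\mathbb{P}^+(C_{mL}(x))\le(1-\delta)^m\to0$, so $\mathbb{P}^+(\Sigma_x)=0$. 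I expect the main obstacle to be the escape lemma, specifically reconciling the Markov admissibility constraints with the splitting geometry: one must route to the common symbol $a$ through an admissible path (where primitivity enters) and then propagate the disjointness of $T_u(I)$ and $T_w(I)$ through the prefix $T_{v\tau}$, for which the injectivity of the maps on $J$ is essential.
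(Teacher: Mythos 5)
Your proof is correct, but it takes a genuinely different route from the paper. The paper first upgrades the splitting pair (via mixing) to two admissible cylinders of a common length $N$ that share \emph{both} their first and last symbols and have disjoint images inside $J$; it then partitions words into blocks of length $N$, builds an injective, measure-non-decreasing substitution map $F_j$ that replaces every occurrence of one block by the other, and so dominates $\mathbb{P}^+(S_x^j)$ by the measure of the set of words avoiding a fixed cylinder at all block positions --- which tends to $0$ by Birkhoff's ergodic theorem applied to $\sigma^N$. You avoid all of this: your uniform escape lemma (routing through an admissible connector $\tau$ to the common initial symbol $a$, which is where primitivity of $P$ enters, and then propagating the disjointness of $T_u(I)$ and $T_w(I)$ through the injective prefix $T_{v\tau}|_J$) yields the explicit geometric bound $\mathbb{P}^+(C_{mL}(x))\le(1-\delta)^m$ with $\delta=p_{\min}^L$, so you get a quantitative decay rate and dispense with the ergodic theorem and with the delicate matching of last symbols that the paper needs to keep its substitution admissible and measure-non-decreasing. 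The only point to tidy is that your escape lemma applies to admissible $v$ only; the cylinders of $C_n(x)$ with $\mathbb{P}^+([v])=0$ contribute nothing to the sum, so the decay estimate is unaffected, but this should be said. Both arguments use the hypotheses in the same essential way (splitting for disjointness, mixing/primitivity for admissible access to the splitting pair, injectivity on $J$ to preserve disjointness under prefixes); yours is the more elementary and more quantitative of the two.
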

 
 \begin{proof}
   By the
 splitting hypothesis there is a pair of admissible cylinders $[i_{1}\dots i_{\ell}]$ and 
 $[j_{1}\dots j_{s}]$ with $i_{1}=j_{1}$ such that 
 \begin{equation}
 \label{e.splittingcondition}
 \begin{split}
 &T_{j_{1}}\circ\dots\circ T_{j_{s}}(I)\cap T_{i_{1}}\circ\dots \circ T_{i_{\ell}}(I)=\emptyset,
 \quad\mbox{and}\\
  &T_{j_{1}}\circ\dots\circ T_{j_{s}}(I) \cup T_{i_{1}}\circ\dots \circ T_{i_{\ell}}(I)
  \subset J.
\end{split}
\end{equation}
%
Next claim restates the splitting condition: 
 \begin{claim} \label{c.claimera52}
 There are
 admissible cylinders $[\xi_{0}\dots \xi_{N-1}]$
 and $[\omega_{0}\dots\omega_{N-1}]$
  such that $\xi_{0}=\omega_{0}$, $\xi_{N-1}=\omega_{N-1}$, 
\[
 \begin{split}
& T_{\xi_{0}}\circ\cdots\circ T_{\xi_{N-1}}(I)\cap 
T_{\omega_{0}}\circ\cdots\circ T_{\omega_{N-1}}(I)=\emptyset
\quad \mbox{and}\\
& T_{\xi_{0}}\circ\cdots\circ T_{\xi_{N-1}}(I)\cup 
T_{\omega_{0}}\circ\cdots\circ T_{\omega_{N-1}}(I)\subset J.
\end{split}
\]
\end{claim}

\begin{proof}
Consider $j_1,\dots,j_s$ and $i_1,\dots,i_\ell$ as in \eqref{e.splittingcondition}.
 Since $\mathbb{P}^{+}$ is mixing 
 there is $n_{0}$ such that
 for every $n\geq n_{0}$ there are 
 admissible cylinders of the form
  $[i_{\ell}c_{1}\dots c_{n-1}0]$ and $[j_{s}d_{1}\dots d_{n-1}0]$. 
 Take now $n_{1},n_{2}\geq n_{0}$ 
  and admissible cylinders
  $[i_{\ell}c_{1}\dots c_{n_{1}}0]$ and $[j_{s}d_{1}\dots d_{n_{2}}0]$ such that $n_{1}+\ell=n_{2}+s$. 
  Let $N=\ell+n_{1}+1$. Then the cylinders 
   $$
  [\xi_{0}\dots \xi_{N-1}]=[i_{1}\dots i_{\ell}c_{1}\dots c_{n_{1}}0] \quad\mbox{and} \quad
[\omega_{0}\dots \omega_{N-1}]=[j_{1}\dots i_{s}d_{1}\dots d_{n_{2}}0]
$$  
are admissible and satisfy the intersection and union properties in the claim. To see why this is so
 note that
$T_{c_1} \circ \cdots \circ  T_{c_{n_1}}\circ T_0(I)\subset I$ and 
$T_{d_1} \circ \cdots \circ  T_{d_{n_2}}\circ T_0(I)\subset I$. 
\end{proof}

We now
fix $x\in I$ and prove that $\mathbb{P}^+(\Sigma_x)=0$.
For that
fix $N$, 
the admissible cylinders $[\xi_{0}\dots \xi_{N-1}]$
 and $[\omega_{0}\dots\omega_{N-1}]$ in the claim, and
for $j\geq 1$ define the sets  
 \begin{equation}\label{e.sigmaxj}
 \Sigma_{x}^{j}\eqdef\{[a_{0}\ldots a_{jN-1}]\subset
  \Sigma_{k}^+\colon x\in T_{a_{0}}\circ\cdots\circ
T_{ a_{jN-1}}(I)\}
\quad
\mbox{and}
\quad
 S_{x}^{j}\eqdef
  \bigcup_{C
 \in\Sigma_{x}^{j}}C.
\end{equation}
Note that by definition $S_x^{j+1}\subset S_x^j$ and that
for  each  $j\geq 1$ it holds
$\Sigma_x\subset S_{x}^{j}$. Hence
$$
\Sigma_x \subset \bigcap_{j\ge 1} S^j_x.
$$
Therefore
$$
\mathbb{P}^+ (\Sigma_x) \le 
\mathbb{P}^+ \Big( \bigcap_{j\ge 1} S^j_x \Big) =\lim_{j\to \infty} \mathbb{P}^+ (S^j_x).
$$
Hence the assertion $\mathbb{P}^+ (\Sigma_x)=0$ in the theorem  follows from the next proposition:
   
 \begin{prop}\label{p.zabelerolimit}
  $\lim_{j\to \infty} \mathbb{P}^{+}(S_{x}^{j})=0$. 
 \end{prop}

\begin{proof}
Suppose, for instance, that the cylinders in the claim satisfy
\begin{equation}\label{e.forinstance}
0<\mathbb{P}^{+}([\xi_{0}\ldots\xi_{N-1}])
\leq
 \mathbb{P}^{+}([\omega_{0}\ldots\omega_{N-1}]).
 \end{equation}  
 The first inequality follows form the admissibility of
 $[\xi_{0}\dots \xi_{N-1}]$. 
 
 Define
  for $j\geq 1$ the family of cylinders
 $$
 E^{j}\eqdef\{[a_{0}\ldots a_{jN-1}]\subset
  \Sigma_{k}^+\colon  \sigma^{iN}([a_{0}\ldots a_{jN-1}])
  \cap [\xi_{0}\ldots\xi_{N-1}]=\emptyset, \,\,i=0,\ldots ,j-1\} 
 $$
 and their union
$$
Q^{j}\eqdef\bigcup_{C
 \in E^{j}} C. 
$$
Note that by definition $Q^{j+1} \subset Q^j$.
Let 
$$
Q^{\infty}\eqdef \bigcap_{j\geq 1} Q^{j}=
\{\omega\in \Sigma^+ \colon \sigma^{iN}(\omega)
 \cap[\xi_{0}\ldots\xi_{N-1}]=\emptyset\,\, \mbox{for all}\,\, i\geq 0\}. 
 $$
Recall that the mixing property of  $(\sigma,\mathbb{P}^{+})$ implies the ergodicity of $(\sigma^{N},\mathbb{P}^{+})$. Thus
the Birkhoff's ergodic theorem implies that $\mathbb{P}^{+}(Q^{\infty})=0$.
 Therefore condition $Q^{j+1}\subset Q^{j}$ implies that 
 $$
 \lim_{j\to \infty} \mathbb{P}^{+}( Q^{j})=0.
 $$
In view of this property, the proposition follows from the next lemma.   
 \begin{lema} \label{l.menorouigual}
  $\mathbb{P}^{+}(S_{x}^{j})\leq \mathbb{P}^{+}(Q^{j})$ 
 for all $j\geq 1$.
 \end{lema}

\begin{proof}
 For each $j\geq 1$ consider the auxiliary substitution function 
 $F_{j}: \Sigma_{x}^{j}\to E^{j}$ defined as follows.
 For each cylinder $[\alpha_0\dots \alpha_{jN-1}]\in \Sigma_{x}^{j}$ 
 we consider its sub-cylinders
 $[\alpha_0\dots \alpha_{N-1}],[\alpha_N\dots \alpha_{2N-1}], \dots, 
 [\alpha_{(j-1)N}\dots \alpha_{jN-1}]$
 and use the following  concatenation notation
 $$
  [\alpha_0\dots \alpha_{jN-1}]=
   [\alpha_0\dots \alpha_{N-1}]\ast [\alpha_N\dots \alpha_{2N-1}]\ast
 \cdots  \ast [\alpha_{(j-1)N}\dots \alpha_{jN-1}].
 $$
 In a compact way, we write
 $$
 C=C_0 \ast C_1 \ast \cdots \ast C_{j-1}
 $$
 where the cylinder  $C$ has size $jN$ and each cylinder $C_i$ has size
 $N$.
 With this notation we define $F_j$ by
 $$
 F_j(C)\eqdef
 F_j(C_0 \ast C_1 \ast \cdots \ast C_{j-1})=
 C_0' \ast C_1' \ast \cdots \ast C_{j-1}',
 $$
 where $C_i'=C_i$ if $C_i\ne [\xi_0\dots \xi_{N-1}]$ and 
 $C_i'=[\omega_0\dots \omega_{N-1}]$ otherwise.

\begin{claim}\label{c.substitutionmap1}
For every $j\ge 1$ it holds 
$
\mathbb{P}^{+}(C)\leq \mathbb{P}^{+} (F_{j}(C)) \quad \textrm{for every}
\quad C\in \Sigma_{x}^{j}.
$ 
\end{claim}  

\begin{proof}
Recalling that
 $\omega_{0}=\xi_{0}$ and $\omega_{N-1}=\xi_{N-1}$,
 from equation~\eqref{e.forinstance}
 we immediately get the following:
 For every  $m, s\ge 0$ and  every pair of cylinders $[a_{0}\ldots a_{s}]$ and 
 $[b_{0}\ldots b_{m}]$ it holds
\begin{enumerate} 
\item 
 $\mathbb{P}^{+}([a_{0}\ldots a_{s}\xi_{0}\ldots\xi_{N-1}
 b_{0}\ldots b_{m}])\leq 
 \mathbb{P}^{+}([a_{0}\ldots a_{s}\omega_{0}\ldots\omega_{N-1}
 b_{0}\ldots b_{m}])$,
 \item$
  \mathbb{P}^{+}([\xi_{0}\ldots\xi_{N-1}
 b_{0}\ldots b_{m}])\leq 
 \mathbb{P}^{+}([\omega_{0}\ldots\omega_{N-1}
 b_{0}\ldots b_{m}])$, and
 \item
  $
  \mathbb{P}^{+}([a_{0}\ldots a_{s}\xi_{0}\ldots\xi_{N-1}]\leq 
 \mathbb{P}^{+}([a_{0}\ldots a_{s}\omega_{0}\ldots\omega_{N-1}].
  $
  \end{enumerate}
The inequality $\mathbb{P}^{+}(C)\leq \mathbb{P}^{+} (F_{j}(C))$ now follows from the definition of
$F_j$.
 \end{proof}

%
  
\begin{claim}\label{c.substitutionmap}
The map $F_{j}$ is injective for every $j\ge 1$.
\end{claim}  

\begin{proof}
Fix $j\geq 1$. 
  Given cylinders $C, \widetilde C\in \Sigma_x^j$,
  using the notation above write
  $ C=C_0 \ast C_1 \ast \cdots \ast C_{j-1}$ 
  and $\widetilde C =\widetilde C_0 \ast \widetilde C_1 \ast \cdots  \ast \widetilde C_{j-1}$.
 Then
 $$
 F_j(C)=C_0' \ast C_1' \ast \cdots \ast C_{j-1}'
 \quad
 \mbox{and}
 \quad
 F_j(\widetilde C )=\widetilde C_0' \ast \widetilde C_1' \ast \cdots  \ast \widetilde C_{j-1}'.
$$
 Suppose that
$F_{j}(C)=F_{j}(\widetilde{C})$.
Then $C_i'=\widetilde C_i'$ for all $i=0,\dots, N-1$. If $C\ne \widetilde C$
there is a first  $i$ such that $C_i\ne \widetilde C_i$. 
Then either $C_i= [\xi_0\dots \xi_{N-1}]$ and 
$\widetilde C_i= [\omega_0\dots \omega_{N-1}]$  or vice-versa. Let us assume that the
first case occurs.

If  $i=0$ then 
the definition of $\Sigma_{x}^{j}$ 
implies that
$$
x\in T_{\xi_0}\circ \cdots \circ T_{\xi_{N-1}} (I) \cap
T_{\omega_0}\circ \cdots \circ T_{\omega_{N-1}} (I),
$$
contradicting Claim~\ref{c.claimera52}. Thus we can assume that $i>0$ and 
define the cylinder 
$$
[\eta_{0}\ldots \eta_{(i-1)N-1}]\eqdef C_0 \ast C_1 \ast \cdots \ast C_{i-1}=
\widetilde C_0 \ast \widetilde C_1 \ast \cdots  \ast \widetilde C_{i-1}.
$$
Write $(i-1)N-1=r$.
By the definition of $\Si_x^j$ in \eqref{e.sigmaxj} we have 
\begin{equation}
\label{e.contradiction18abril}
x\in T_{\eta_0} \circ \cdots \circ T_{\eta_{r}}
\circ T_{\xi_0}\circ \cdots \circ T_{\xi_{N-1}} (I) \cap T_{\eta_0} \circ \cdots \circ T_{\eta_{r}}
\circ T_{\omega_0}\circ \cdots \circ T_{\omega_{N-1}} (I).
\end{equation}
Since for every $i$ we have that $T_{i}(J)\subset J$ and 
 $T_{i}|_{J}$ is injective, the intersection and union inclusion properties in Claim \ref{c.claimera52}
implies that 
$$ T_{\eta_0} \circ \cdots \circ T_{\eta_{r}}
\circ T_{\xi_0}\circ \cdots \circ T_{\xi_{N-1}} (I) \cap T_{\eta_0} \circ \cdots \circ T_{\eta_{r}}
\circ T_{\omega_0}\circ \cdots \circ T_{\omega_{N-1}} (I)=\emptyset,
$$
contradicting \eqref{e.contradiction18abril}.
Thus $C=\widetilde C$ and  proof of the claim is complete.
\end{proof}

To prove that
$\mathbb{P}^{+}(S_{x}^{j})\leq\mathbb{P}^{+}(Q^{j})$ note that
 $$
 \mathbb{P}^{+}(S_{x}^{j})
 \underset{(\textrm{a})}{=}\sum_{C\in\Sigma^{j}_{x}} \mathbb{P}^{+}(C)
  \underset{(\textrm{b})}{\leq}
 \sum_{C\in\Sigma^{j}_{x}} \mathbb{P}^{+}(F_{j}(C))
  \underset{(\textrm{c})}{=}\mathbb{P}^{+}\big( \bigcup_{C\in\Sigma^{j}_{x}} F_{j}(C)
  \big) 
   \underset{(\textrm{d})}{\le}  \mathbb{P}^+ (Q^j),
  $$ 
  where (a) follows from the disjointness of the cylinders $C\in \Sigma_x^j$,
  (b)
   from  Claim~\ref{c.substitutionmap1}, 
   (c)
 from the injectivity of $F_{j}$ (Claim~\ref{c.substitutionmap}), and 
  (d) from  $F_{j}(C)\in E_j \subset Q^{j}$.
The proof of the lemma is now complete. 
\end{proof}
This completes the proof of the proposition.
\end{proof}
The proof of Theorem~\ref{t.p.usadoAtdois} (i.e., $\mathbb{P}^{+}(\Sigma_{x})=0$) is now complete.
\end{proof}
The proof of Theorem~\ref{mt.abundance} is now complete.
\hfil \qed

%
%

\subsection{Probabilistic rigidity of $S_{\mathrm{t}}$}
\label{ss.prigidtyst}
In this section we see that under quite general conditions
the hypothesis 
$S_{\mathrm{t}}\cap \mathrm{supp}(\mathbb{P}^{+})\ne \emptyset$ implies that
 $\mathbb{P}^+(S_{\mathrm{t}})=1$. 
Recall the definition of the projection $\pi$ in \eqref{e.pi}.

\begin{teo}\label{t.rigidity}
Consider an injective
$\mathrm{IFS}(T_{1},\dots T_{k})$ defined on $I=[0,1]$. 
Let $\mathbb{P}^{+}$ be  a mixing Markov measure defined  on $\Sigma_k^+$ with
transition matrix $P=(p_{ij})$. 
\begin{itemize}
\item
If 
 there is $i\in\{1,\dots,k\}$ such that $\pi$ is not constant in 
 $ [i] \cap \mathrm{supp}(\mathbb{P}^{+})$ then $\mathbb{P}^{+}(S_{\mathrm{t}})=1$.
In particular, 
 $$
 \#\pi\big( S_{\mathrm{t}}\cap \mathrm{supp}(\mathbb{P}^{+}) \big)\geq k+1
 \quad \Longrightarrow \quad
\mathbb{P}^{+}(S_{\mathrm{t}})=1.
$$
 \item
If the maps $T_i$ have no common fixed points and for every  $i$ and $j$, with $i\neq j$, there is $m\in \{1,\dots, k\}$ with 
$p_{mi}\, p_{mj}>0$.
Then 
$$
S_{\mathrm{t}}\cap \mathrm{supp}(\mathbb{P}^{+})\neq \emptyset
\quad
\Longleftrightarrow
\quad
\mathbb{P}^{+} (S_{\mathrm{t}})=1.
$$
\end{itemize} 
\end{teo}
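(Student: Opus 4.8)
The plan is to prove Theorem~\ref{t.rigidity} in two parts, using Theorem~\ref{t.p.usadoAtdois} (which asserts $\mathbb{P}^+(\Sigma_x)=0$ for all $x$ whenever the measure splits the IFS in some interval) as the main engine. The central observation is that the splitting condition of Definition~\ref{d.splits} is exactly what is needed, so in each item I would try to \emph{manufacture} a splitting from the weaker hypotheses. Throughout I use that the IFS is injective on $I=[0,1]$ (so each $T_i$ is a homeomorphism onto its image and compositions are injective), and that mixing means the transition matrix $P$ is primitive.

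For the first item, suppose there is $i$ with $\pi$ non-constant on $[i]\cap\mathrm{supp}(\mathbb{P}^+)$. Then there are two sequences $\eta,\zeta\in S_{\mathrm{t}}\cap[i]\cap\mathrm{supp}(\mathbb{P}^+)$ with $\pi(\eta)\ne\pi(\zeta)$, i.e. their singleton fibres $I_\eta=\{\pi(\eta)\}$ and $I_\zeta=\{\pi(\zeta)\}$ are distinct. Since $\pi(\eta)=\lim_n T_{\eta_0}\circ\cdots\circ T_{\eta_n}(I)$ and likewise for $\zeta$, and the two limit points are distinct, I can choose $\ell,s$ large enough that the cylinders $[\eta_0\dots\eta_\ell]$ and $[\zeta_0\dots\zeta_s]$ are admissible (they lie in $\mathrm{supp}(\mathbb{P}^+)$), share the first symbol $i$, and satisfy
\[
T_{\eta_0}\circ\cdots\circ T_{\eta_\ell}(I)\cap T_{\zeta_0}\circ\cdots\circ T_{\zeta_s}(I)=\emptyset.
\]
This is precisely the splitting condition of Definition~\ref{d.splits} with $J=I$. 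Hence $\mathbb{P}^+$ splits the IFS, Theorem~\ref{t.p.usadoAtdois} gives $\mathbb{P}^+(\Sigma_x)=0$ for all $x$, and Lemma~\ref{l.lemasuficiente} yields $\mathbb{P}^+(S_{\mathrm{t}})=1$. For the parenthetical consequence: if $\#\pi(S_{\mathrm{t}}\cap\mathrm{supp}(\mathbb{P}^+))\ge k+1$, then by pigeonhole the $k$ cylinders $[1],\dots,[k]$ cannot each carry a single $\pi$-value, so some $[i]$ must have non-constant $\pi$, reducing to the case just proved.

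For the second item the direction $\Leftarrow$ is trivial since $\mathbb{P}^+(S_{\mathrm{t}})=1$ forces $S_{\mathrm{t}}\cap\mathrm{supp}(\mathbb{P}^+)\ne\emptyset$. For $\Rightarrow$, take $\xi\in S_{\mathrm{t}}\cap\mathrm{supp}(\mathbb{P}^+)$ with fibre $\{p\}=\{\pi(\xi)\}$. The goal is to produce the non-constant-$\pi$ situation of item one. Since the $T_i$ have \emph{no common fixed point}, there is some $j$ with $T_j(p)\ne p$; I would exploit that $p$, being a limit of contracted images, can be pushed to a genuinely different point by prepending a well-chosen symbol. The hypothesis that for every $i\ne j$ there is $m$ with $p_{mi}p_{mj}>0$ guarantees that from state $m$ both symbols $i$ and $j$ are admissible, so two sequences sharing the first symbol $m$ but then branching into admissible continuations realizing distinct fibre points can be assembled inside $\mathrm{supp}(\mathbb{P}^+)$; combined with the injectivity (so that distinct continuations give disjoint images after finitely many steps) this again yields a splitting, and item one applies.

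\textbf{The main obstacle} I anticipate is the $\Rightarrow$ direction of item two: constructing two admissible sequences with a \emph{common first symbol} whose fibres are distinct singletons, using only a single known weakly hyperbolic sequence $\xi$ together with the no-common-fixed-point and $p_{mi}p_{mj}>0$ conditions. The delicate point is that knowing one sequence contracts to a point $p$ does not immediately hand me a \emph{second} weakly hyperbolic sequence landing elsewhere; the no-common-fixed-point hypothesis must be leveraged to guarantee that after applying some admissible map the image of $p$ separates from $p$, and then the primitivity and the transition-coupling condition $p_{mi}p_{mj}>0$ must be used to splice these into genuinely admissible cylinders sharing a first symbol. Managing the bookkeeping of admissibility under the transition matrix, while ensuring the resulting images are disjoint, is where the real work lies; the rest follows formally from Theorem~\ref{t.p.usadoAtdois} and Lemma~\ref{l.lemasuficiente}.
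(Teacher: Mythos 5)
Your first item is correct and follows the paper's own argument essentially verbatim: two sequences in $S_{\mathrm{t}}\cap[i]\cap\mathrm{supp}(\mathbb{P}^{+})$ with distinct $\pi$-values give, for long enough truncations, admissible cylinders with a common first symbol and disjoint images, hence a splitting, and Theorem~\ref{mt.abundance} (equivalently Theorem~\ref{t.p.usadoAtdois} plus Lemma~\ref{l.lemasuficiente}) finishes; the pigeonhole reduction for the ``in particular'' clause is also exactly the paper's. The direction $\Leftarrow$ of the second item and the final splicing step (prepend $T_m$ with $p_{m\xi_0}p_{m\omega_0}>0$ and use injectivity to preserve disjointness) are likewise fine.

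The genuine gap is the one you yourself flag: in the $\Rightarrow$ direction of the second item you never actually produce the second sequence $\omega\in S_{\mathrm{t}}\cap\mathrm{supp}(\mathbb{P}^{+})$ with $\pi(\omega)\ne\pi(\xi)$, and this is not a routine bookkeeping matter. The naive move --- prepend a symbol $i_0$ with $T_{i_0}(\pi(\xi))\ne\pi(\xi)$ --- fails because $i_0\ast\xi$ need not be admissible ($p_{i_0\xi_0}$ may vanish); and prepending an admissible connecting word $i_0i_1\dots i_m$ (which irreducibility does provide) fails for a different reason: $\pi(i_0\dots i_m\ast\xi)=T_{i_0}\circ\cdots\circ T_{i_m}(\pi(\xi))$, and nothing guarantees this composition moves $\pi(\xi)$ even when $T_{i_0}$ alone does. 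The paper's Lemma~\ref{l.eraclaim} resolves this with a specific device you would need to supply: set $r=\max\{\ell\colon T_{i_\ell}(\pi(\xi))\ne\pi(\xi)\}$ along the admissible word $[i_0\dots i_m\xi_0]$ and take $\omega=i_r\dots i_m\ast\xi$; since every $T_{i_{r+1}},\dots,T_{i_m}$ fixes $\pi(\xi)$, one gets $\pi(\omega)=T_{i_r}(\pi(\xi))\ne\pi(\xi)$, while $\omega$ remains in $S_{\mathrm{t}}\cap\mathrm{supp}(\mathbb{P}^{+})$ because it is a tail of an admissible concatenation and $S_{\mathrm{t}}$ is closed under prepending finitely many maps. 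Without this (or an equivalent) construction, your argument for the second item does not close.
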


%

\begin{proof}
To prove the first item of the theorem
%
note that by hypothesis there is $i$ such that $\pi$ is not constant in 
 $ [i] \cap \mathrm{supp}(\mathbb{P}^{+})$.  Hence 
 $\xi, \omega \in  [i] \cap \mathrm{supp}(\mathbb{P}^{+})\cap S_{\mathrm{t}}$ such that $\pi(\xi)\neq \pi(\omega)$. 
 Thus there are $s$ and $\ell$ 
  such that 
 $$
 T_{\xi_0}\circ\dots\circ T_{\xi_{s}}
 (I)\cap T_{\omega_0}\circ\dots \circ T_{\omega_\ell}(I)=\emptyset.
 $$
 As $\xi,\omega\in [i] \cap \mathrm{supp}(\mathbb{P}^{+})$
 the cylinders
 $[\xi_0 \dots \xi_s]$ and $[\omega_0\dots \omega_\ell]$ are both admissible and satisfy
 $\xi_0=\omega_0=i$. This means that $\mathbb{P}^{+}$
 splits the IFS. Hence, by Theorem~\ref{mt.abundance}, 
  $\mathbb{P}^{+}(S_{\mathrm{t}})=1$.
 
 For the second part of the first item, just note that
 if $\#\pi\big( S_{\mathrm{t}}\cap \mathrm{supp}(\mathbb{P}^{+}) \big)\geq k+1$
 then from the pigeonhole principle there is $i$ such that $\pi$ is not constant in 
 $[i] \cap \mathrm{supp}(\mathbb{P}^{+})$. 

The 
implication 
$(\Leftarrow)$ in the
second item of the theorem is immediate. For the implication $(\Rightarrow)$
%
%
 we need the following lemma.
%
\begin{lema}\label{l.eraclaim}
For every  $\xi\in S_{\mathrm{t}}\cap \mathrm{supp}(\mathbb{P}^{+})$
there is $\omega \in  S_{t}\cap \mathrm{supp}(\mathbb{P}^{+})$ 
such that $\pi(\xi)\neq \pi(\omega)$.
\end{lema}
\begin{proof}
 Fix $\xi \in S_{\mathrm{t}}$. By definition of $S_{\mathrm{t}}$ we have that 
$$
\{\pi(\xi)\}= \bigcap_{n\geq 0} T_{\xi_{0}}\circ\cdots\circ T_{\xi_{n}}(I).
$$
 As the maps $T_{i}$ have no common fixed points 
 there is $i_{0}$ such that 
 $T_{i_{0}}(\pi(\xi))\neq \pi(\xi)$. The definition of an irreducible matrix implies that 
 there is an admissible cylinder of the form $[i_{0}i_{1}\dots i_{m} \xi_{0}]$. 
 Let 
 $$
 r\eqdef \mathrm{max}\big\{\ell\in \{0,\dots,m\} \colon T_{i_{\ell}}(\pi(\xi))\neq \pi(\xi)\big\} \ge 0.
 $$ 
  Consider the concatenation $\omega=i_{r}\dots i_{m}\ast \xi$. 
  Note  that, by definition, $\pi(\zeta)= T_{\zeta_0} 
  (\pi(\sigma (\zeta)))$ for  every $\zeta\in S_\mathrm{t}$. 
  Hence
  $$
  \pi(\omega)= T_{i_r}\circ \cdots \circ T_{i_m} (\pi(\xi)).
  $$
 By definition of $r$, 
 $$
 T_{i_m} (\pi(\xi))=\cdots = T_{i_{r+1}}(\pi(\xi))=\pi(\xi).
 $$
 Therefore 
 $$
 \pi(\omega) =T_{i_r}(\pi(\xi))\ne \pi(\xi).
 $$
  It remains to see that
  $\omega\in S_{\mathrm{t}}\cap \mathrm{supp}(\mathbb{P}^{+})$, for that just note that the cylinder $[i_0\dots i_m\xi_0]$ is admissible
  and $\xi \in \mathrm{supp}(\mathbb{P}^{+})$.
  This ends the proof of the lemma.
\end{proof}

Take sequences $\xi$ and $\omega$ as in Lemma~\ref{l.eraclaim}.
By definition of $\pi$, 
$$
\{\pi(\xi)\}= \bigcap_{n\geq 0} T_{\xi_{0}}\circ\cdots\circ T_{\xi_{n}}(I)
 \quad \textrm{and} \quad
\{\pi(\omega)\}=\bigcap_{n\geq 0} T_{\omega_{0}}\circ\cdots\circ T_{\omega_{n}}(I).
$$
As $\pi(\xi)\ne \pi(\omega)$  
 there are $\ell$ and $s$ such that 
\begin{equation}\label{e.sequences}
T_{\xi_{0}}\circ\cdots\circ T_{\xi_{\ell}}(I)\cap 
T_{\omega_{0}}\circ\cdots\circ T_{\omega_{s}}(I)=\emptyset.
\end{equation}
Note that the cylinders 
 $[\xi_0\dots \xi_\ell]$ and $[\omega_0\dots\omega_s]$ are admissible.
If $\xi_0=\omega_0$ we are done. Otherwise,  $\xi_0\ne \omega_0$ and by hypothesis there is $m$ such that $p_{m\xi_0}>0$ and $p_{m\omega_0}>0$.
This implies that the cylinders $[m\xi_0\dots \xi_\ell]$ and $[m\omega_0\dots\omega_s]$  are 
both admissible. Since the 
 maps $T_{i}$ are injective it follows from \eqref{e.sequences}
$$
 T_m\circ
T_{\xi_{0}}\circ\cdots\circ T_{\xi_{\ell}}(I)\cap 
T_m\circ T_{\omega_{0}}\circ\cdots\circ T_{\omega_{s}}(I)=\emptyset.
$$
Therefore $\mathbb{P}^+$ splits the IFS and by Theorem~\ref{mt.abundance} we have
$\mathbb{P}^{+} (S_{\mathrm{t}})=1$. This ends the proof of the theorem.
 \end{proof}

 \subsection{Separability}\label{ss.separability}
In this section we give  some characterisations of a
separable IFS. Note that item (2) in the next theorem means that the IFS is separable.

%
\begin{teo}\label{t.separableee} 
Consider an  $\mathrm{IFS}(T_{1},\dots T_{k})$ defined on $I=[0,1]$.
Suppose that there is some non-trivial closed
interval $J$ 
such that $T_i(J)\subset J$ and $T_i|_{J}$ is injective for every $j\in \{1,\dots,k\}$.
Then  the following assertions are equivalent:
 \begin{enumerate}
  \item
 The maps of the IFS have no common fixed points and $S_{\mathrm{t}}\neq\emptyset$.
 \item
 The target set $A_{\mathrm{t}}$ has at least two elements.
 \item
 There are finite sequences $\xi_{1}\dots{\xi_{\ell}}$ and $\omega_{1}\dots \omega_{s}$ 
 such that 
\[
\begin{split}
&T_{\xi_{1}}\circ\dots \circ T_{\xi_{\ell}}(I)\cap T_{\omega_{1}}\circ\dots\circ T_{\omega_{s}}(I)=\emptyset
\quad \mbox{and}\\
&T_{\xi_{1}}\circ\dots \circ T_{\xi_{\ell}}(I)\cup T_{\omega_{1}}\circ\dots\circ T_{\omega_{s}}(I)\subset J.
\end{split}
\]
\item
 The maps of the IFS have no common fixed point and $\mathbb{P}^{+}(S_{\mathrm{t}})=1$ for every 
 mixing
 Markov measure $\mathbb{P}^{+}$ whose support is the whole $\Sigma_k^+$.
\end{enumerate}
\end{teo}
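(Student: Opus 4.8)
The plan is to establish the chain $(2)\Rightarrow(1)\Rightarrow(2)$ (yielding $(1)\Leftrightarrow(2)$) together with the loop $(1)\Rightarrow(3)\Rightarrow(4)\Rightarrow(1)$, which covers all four equivalences. Throughout I use the standing hypotheses $T_i(J)\subset J$ and $T_i|_J$ injective, which make $J$ a $\mathcal{B}$-invariant set; hence by Propositions~\ref{p.existence} and~\ref{p.l.minimality}, whenever $S_{\mathrm t}\neq\emptyset$ one has $\overline{A_{\mathrm t}}\subset J^\ast\subset J$, so that $A_{\mathrm t}\subset J$ and every $T_i$ is injective on $A_{\mathrm t}$.

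For $(1)\Leftrightarrow(2)$: to get $(1)\Rightarrow(2)$ I fix $\xi\in S_{\mathrm t}$ and use the absence of a common fixed point to find $i_0$ with $T_{i_0}(\pi(\xi))\neq\pi(\xi)$; by Scholium~\ref{sc.Atinv} both points lie in $A_{\mathrm t}$, so $\#A_{\mathrm t}\geq2$. Conversely, $A_{\mathrm t}=\pi(S_{\mathrm t})\neq\emptyset$ forces $S_{\mathrm t}\neq\emptyset$, while a common fixed point $q$ would satisfy $q\in T_{\xi_0}\circ\cdots\circ T_{\xi_n}(X)$ for all $n$ and all $\xi\in S_{\mathrm t}$, forcing $\pi(\xi)=q$ and $A_{\mathrm t}=\{q\}$; this contradicts $(2)$, giving $(2)\Rightarrow(1)$.

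For $(1)\Rightarrow(3)$: since $(1)\Rightarrow(2)$ gives $\#A_{\mathrm t}\geq2$ and the $T_i$ are injective on $A_{\mathrm t}\subset J$, Proposition~\ref{p.dichotomy}(1) shows that $A_{\mathrm t}$ is infinite with no isolated points. Deleting the at most two endpoints of $J$ leaves $A_{\mathrm t}\cap\mathrm{int}(J)$ infinite, so I may choose $\xi,\omega\in S_{\mathrm t}$ with $\pi(\xi)\neq\pi(\omega)$ and both projections in $\mathrm{int}(J)$. Because $\xi,\omega\in S_{\mathrm t}$, the nested images $T_{\xi_0}\circ\cdots\circ T_{\xi_n}(I)$ and $T_{\omega_0}\circ\cdots\circ T_{\omega_n}(I)$ converge in Hausdorff distance to the singletons $\{\pi(\xi)\}$ and $\{\pi(\omega)\}$; truncating far enough places both images inside $\mathrm{int}(J)\subset J$ and, as the limit points differ, makes them disjoint. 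After relabelling indices this is exactly condition $(3)$. I expect this to be the main obstacle: condition $(3)$ demands images of the \emph{whole} interval $I$ (not of $J$) to sit inside $J$, and it is precisely to secure this that the projection points must be interior to $J$ — which is what the dichotomy of Proposition~\ref{p.dichotomy} provides.

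For $(3)\Rightarrow(4)$: writing $g_1,g_2$ for the two compositions in $(3)$, a common fixed point $q$ would give $q=g_1(q)=g_2(q)\in g_1(I)\cap g_2(I)$, contradicting disjointness, so $(3)$ rules out common fixed points. Given any mixing Markov measure $\mathbb{P}^+$ of full support, every cylinder is admissible; prepending a fixed symbol $m$ and using that $T_m|_J$ is injective with $T_m(J)\subset J$, the admissible cylinders $[m\,\xi_1\dots\xi_\ell]$ and $[m\,\omega_1\dots\omega_s]$ share their first symbol and satisfy the disjointness and inclusion-in-$J$ conditions of Definition~\ref{d.splits}. Hence $\mathbb{P}^+$ splits the IFS in $J$, and Theorem~\ref{mt.abundance} yields $\mathbb{P}^+(S_{\mathrm t})=1$; together with the absence of common fixed points this is $(4)$. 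Finally $(4)\Rightarrow(1)$ is immediate: the uniform-weight Bernoulli measure is a mixing full-support Markov measure, so $(4)$ gives $\mathbb{P}^+(S_{\mathrm t})=1>0$, hence $S_{\mathrm t}\neq\emptyset$, and $(4)$ already includes the no-common-fixed-point clause.
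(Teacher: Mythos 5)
Your proposal is correct and follows essentially the same route as the paper's proof: each implication ($S_{\mathrm{t}}\neq\emptyset$ plus no common fixed point gives two points of $A_{\mathrm{t}}$ via Scholium~\ref{sc.Atinv}; Proposition~\ref{p.dichotomy} and $\overline{A_{\mathrm{t}}}\subset J$ yield two points in $\mathrm{int}(J)$ whose shrinking images of $I$ give (3); prepending a single map shows every full-support mixing Markov measure splits the IFS so Theorem~\ref{mt.abundance} applies) matches the paper's argument. The only cosmetic difference is that you add a direct, redundant proof of $(2)\Rightarrow(1)$ rather than closing the single cycle $(1)\Rightarrow(2)\Rightarrow(3)\Rightarrow(4)\Rightarrow(1)$.
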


\begin{proof}
To prove the implication $(1)\Rightarrow (2)$ note that
 since $S_{\mathrm{t}}\neq \emptyset$ there is  $p\in A_{\mathrm{t}}$.
 Since the maps of the IFS have no common fixed point there is
$i$ such that 
 $T_{i}(p)\neq p$. The invariance of $A_{\mathrm{t}}$ implies that $T_{i}(p)\in A_{\mathrm{t}}$.
 Thus $\{p, T_i(p)\} \subset  A_{\mathrm{t}}$
 and we are done.
 
To see that
$(2)\Rightarrow (3)$ we need the following claim:
\begin{claim}
 $\#(A_{\mathrm{t}}\cap \mathrm{int}(J))\ge 2.$ 
\end{claim}
\begin{proof}
Since $T_i(J)\subset J$ for every $i$ we have that $\mathcal{B}(J)\subset J$. Hence Propositions \ref{p.existence} and \ref{p.l.minimality} implies that 
 $A_{t}\subset J$. The claim follows from Proposition~\ref{p.dichotomy}.
\end{proof}

Take  two different points $p,q\in A_{\mathrm{t}}\cap \mbox{int}\, J$  and consider disjoint
  neighbourhoods $U$ and $V$ of $p$ and $q$, respectively, such that $U\cup V\subset J$. By the definition of $ A_{\mathrm{t}}$  there are sequences $\xi$ and $\omega$ such that 
  $$
  \{p\}=\bigcap_{n\geq 0} T_{\xi_{0}}\circ \dots \circ T_{\xi_{n}}(I) \quad
  \mbox{and}\quad \{q\}=\bigcap_{n\geq 0} T_{\omega_{0}}\circ \dots \circ T_{\omega_{n}}(I). 
  $$
 Hence there are $n_{0}$ and $m_{0}$ such that 
$T_{\xi_{0}}\circ \dots \circ T_{\xi_{n_{0}}}(I)\subset U$
and $T_{\omega_{0}}\circ \dots \circ T_{\omega_{m_{0}}}(I)\subset V$.
Since $U\cap V=\emptyset$ we get the implication $(2)\Rightarrow (3)$.

To prove
$(3)\Rightarrow (4)$ consider the
finite sequences $\xi_{1}\dots{\xi_{\ell}}$ e $\omega_{1}\dots \omega_{s}$ in item (3). 
Clearly the condition in (3)  prevents the existence of a common fixed point. On the other hand,
since $T_1(J)\subset J$ and $T_1|_{J}$ is injective, we have that 
\[
\begin{split}
&T_{1}\circ T_{\xi_{1}}\circ\dots \circ T_{\xi_{\ell}}(I)\cap T_{1}\circ T_{\omega_{1}}\circ\dots\circ T_{\omega_{s}}(I)=\emptyset
\quad 
\mbox{and}\\
&T_{1}\circ T_{\xi_{1}}\circ\dots \circ T_{\xi_{\ell}}(I)\cup T_{1}\circ T_{\omega_{1}}\circ\dots\circ T_{\omega_{s}}(I)\subset J.
\end{split}
\]
Thus every mixing Markov measure with full support $\mathbb{P}^{+}$ splits the IFS in $J$. Now
Theorem \ref{mt.abundance} implies  that $\mathbb{P}^{+}(S_{\mathrm{t}})=1$ and we are done.

The implication $(4)\Rightarrow (1)$ is immediate.
\end{proof}



\section{Asymptotic stability on measures}\label{s.stationary}


In this section we prove Theorems~\ref{mt.eimportante} and 
\ref{mt.generalizedB} in Sections~\ref{ss.stationaryyy} and 
\ref{ss.fimdehora}, respectively.

\subsection{Stationary measures for IFSs with probabilities in $[0,1]$}
\label{ss.stationaryyy}
In this section we prove Theorem~\ref{mt.eimportante}. For that
we consider a separable $\mathrm{IFS}(T_{1},\dots T_{k};\mathfrak{b})$
defined
on $I=[0,1]$, its Markov operator $\mathfrak{T}=\mathfrak{T}_\mfb$, and
its coding map $\pi$ in \eqref{e.pi}, we see that for every probability measure $\mu\in \mathcal{M}_{1}(I)$ 
it holds 
 $$
 \lim_{n\to \infty} \mathfrak{T}^{n}\mu=\pi_{*}\mathfrak{b}
\quad \mbox{(asymptotic stability)}.
 $$
 The main step of the proof of the theorem is the 
 next proposition  that states a sufficient condition for 
 the asymptotic stability of
 an IFS with 
 probabilities.

  \begin{prop}
\label{p.attractingstationary}
Consider an $\mathrm{IFS}(T_{1},\dots T_{k};\mfb)$ with probabilities 
defined on a compact metric space $X$. 
Suppose that $\mathfrak{b}(S_{\mathrm{t}})=1$. Then for every 
probability measure $\mu\in \mathcal{M}_{1}(X)$ 
it holds 
 $$
 \lim_{n\to \infty} \mathfrak{T}_{\mfb}^{n}\mu=\pi_{*}\mathfrak{b}.
 $$
In particular, $\mu_{\mathfrak{b}}\eqdef\pi_{*}\mathfrak{b}$ is the unique stationary measure of 
$\mathrm{IFS}(T_{1},\dots T_{k};\mfb)$. 
Furthermore, $\mathrm{supp}(\mu_{\mathfrak{b}})=\overline{A_{\mathrm{t}}}$.
\end{prop}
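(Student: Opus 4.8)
The plan is to write out $\mathfrak{T}_\mfb^n$ explicitly, express both $\int\varphi\,d\mathfrak{T}_\mfb^n\mu$ and $\int\varphi\,d\pi_*\mfb$ as integrals over $\Sigma_k^+$ against $\mfb$, and then exploit that the hypothesis $\mfb(S_{\mathrm{t}})=1$ is precisely the statement that the fibre diameters shrink $\mfb$-almost surely, so that a dominated/bounded convergence argument closes the estimate. First I would record the iteration formula: since $T_{i*}T_{j*}\mu=(T_i\circ T_j)_*\mu$, induction gives
$$
\mathfrak{T}_\mfb^n\mu=\sum_{i_0,\dots,i_{n-1}}p_{i_0}\cdots p_{i_{n-1}}\,(T_{i_0}\circ\cdots\circ T_{i_{n-1}})_*\mu.
$$
Fix a continuous $\varphi\colon X\to\mathbb{R}$. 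The quantity $\int_X\varphi(T_{i_0}\circ\cdots\circ T_{i_{n-1}}(x))\,d\mu(x)$ depends only on $i_0,\dots,i_{n-1}$, and $p_{i_0}\cdots p_{i_{n-1}}=\mfb([i_0\dots i_{n-1}])$, so the weighted sum is an integral against $\mfb$; hence
$$
\int_X\varphi\,d\mathfrak{T}_\mfb^n\mu=\int_{\Sigma_k^+}\Big(\int_X\varphi\big(T_{\xi_0}\circ\cdots\circ T_{\xi_{n-1}}(x)\big)\,d\mu(x)\Big)d\mfb(\xi),
$$
while, using $\mfb(S_{\mathrm{t}})=1$ and that $\mu$ is a probability,
$$
\int_X\varphi\,d\pi_*\mfb=\int_{\Sigma_k^+}\int_X\varphi(\pi(\xi))\,d\mu(x)\,d\mfb(\xi).
$$

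The core estimate comes next. For $\xi\in S_{\mathrm{t}}$ both $T_{\xi_0}\circ\cdots\circ T_{\xi_{n-1}}(x)$ and $\pi(\xi)$ lie in $T_{\xi_0}\circ\cdots\circ T_{\xi_{n-1}}(X)$ (the former trivially, the latter because $\pi(\xi)\in\bigcap_m T_{\xi_0}\circ\cdots\circ T_{\xi_m}(X)$), so their distance is at most $d_n(\xi)\eqdef\mathrm{diam}\,T_{\xi_0}\circ\cdots\circ T_{\xi_{n-1}}(X)$. Writing $\omega_\varphi(t)=\sup\{|\varphi(y)-\varphi(z)|\colon d(y,z)\le t\}$ for the modulus of continuity of $\varphi$ (bounded by $2\norm{\varphi}_\infty$ and with $\omega_\varphi(t)\to 0$ as $t\to 0^+$ by uniform continuity on the compact space $X$), subtracting the two displays and bounding pointwise gives
$$
\Big|\int_X\varphi\,d\mathfrak{T}_\mfb^n\mu-\int_X\varphi\,d\pi_*\mfb\Big|\le\int_{\Sigma_k^+}\omega_\varphi\big(d_n(\xi)\big)\,d\mfb(\xi).
$$
By definition $\xi\in S_{\mathrm{t}}$ means exactly $d_n(\xi)\to 0$, so $\omega_\varphi(d_n(\xi))\to 0$ for $\mfb$-a.e.\ $\xi$; since each $d_n$ takes only finitely many values (it depends on $\xi_0,\dots,\xi_{n-1}$, so no measurability worry) and $\omega_\varphi(d_n(\cdot))\le 2\norm{\varphi}_\infty$, bounded convergence forces the right-hand side to $0$. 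This is the asymptotic stability $\mathfrak{T}_\mfb^n\mu\to\pi_*\mfb$ for every $\mu$. Uniqueness is then immediate: any stationary $\nu$ satisfies $\nu=\mathfrak{T}_\mfb^n\nu\to\pi_*\mfb$, and $\pi_*\mfb$ is itself stationary because $\mathfrak{T}_\mfb\pi_*\mfb=\mathfrak{T}_\mfb(\lim_n\mathfrak{T}_\mfb^n\mu)=\lim_n\mathfrak{T}_\mfb^{n+1}\mu=\pi_*\mfb$ by continuity of $\mathfrak{T}_\mfb$.

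For the support I would prove both inclusions. The easy one uses $\pi(S_{\mathrm{t}})=A_{\mathrm{t}}\subset\overline{A_{\mathrm{t}}}$, whence $\pi_*\mfb(\overline{A_{\mathrm{t}}})=\mfb(\pi^{-1}(\overline{A_{\mathrm{t}}}))\ge\mfb(S_{\mathrm{t}})=1$, so $\mathrm{supp}(\pi_*\mfb)\subset\overline{A_{\mathrm{t}}}$. For the reverse, take $x=\pi(\xi)\in A_{\mathrm{t}}$ and an open neighbourhood $V$ of $x$; since $d_n(\xi)\to 0$ and $\{x\}=\bigcap_n T_{\xi_0}\circ\cdots\circ T_{\xi_n}(X)$, some finite block satisfies $T_{\xi_0}\circ\cdots\circ T_{\xi_n}(X)\subset V$, so every $\eta\in[\xi_0\dots\xi_n]\cap S_{\mathrm{t}}$ has $\pi(\eta)\in V$; hence
$$
\pi_*\mfb(V)\ge\mfb([\xi_0\dots\xi_n]\cap S_{\mathrm{t}})=\mfb([\xi_0\dots\xi_n])=p_{\xi_0}\cdots p_{\xi_n}>0,
$$
using the positivity of the weights and $\mfb(S_{\mathrm{t}})=1$. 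Thus $A_{\mathrm{t}}\subset\mathrm{supp}(\pi_*\mfb)$, and taking closures gives the equality $\mathrm{supp}(\pi_*\mfb)=\overline{A_{\mathrm{t}}}$. The only genuinely delicate point in the whole argument is the one isolated above: recognising that the probabilistic hypothesis $\mfb(S_{\mathrm{t}})=1$ translates literally into the $\mfb$-almost-sure vanishing of the diameters $d_n$ that drives the convergence; once this identification is made, every remaining step is routine.
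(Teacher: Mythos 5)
Your proof is correct. The main convergence argument is essentially the paper's: both rewrite $\int \varphi\, d\mathfrak{T}_\mfb^n\mu$ as an integral over $\Sigma_k^+$ against $\mfb$ of the cylinder-constant functions $\xi\mapsto \int \varphi\circ T_{\xi_0}\circ\cdots\circ T_{\xi_{n-1}}\,d\mu$ and conclude by dominated convergence using $\mfb(S_{\mathrm{t}})=1$; your modulus-of-continuity estimate simply inlines what the paper isolates as a separate lemma (weak$*$ convergence of $T_{\xi_0*}\cdots T_{\xi_{n-1}*}\mu$ to $\delta_{\pi(\xi)}$ for $\xi\in S_{\mathrm{t}}$). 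Where you genuinely diverge is the inclusion $\overline{A_{\mathrm{t}}}\subset\mathrm{supp}(\pi_*\mfb)$: the paper derives it abstractly, showing that the support of any stationary measure is $\mathcal{B}$-invariant, hence contains a fixed point of $\mathcal{B}$, hence contains the minimum fixed point $\overline{A_{\mathrm{t}}}$; you instead argue directly that any neighbourhood $V$ of a point of $A_{\mathrm{t}}$ contains some image $T_{\xi_0}\circ\cdots\circ T_{\xi_n}(X)$, so $\pi^{-1}(V)$ contains $[\xi_0\dots\xi_n]\cap S_{\mathrm{t}}$, which has positive $\mfb$-measure. Your route is more elementary and self-contained; the paper's route buys reusable machinery -- the same invariance-of-support lemma is what lets it prove, in Proposition~\ref{p.ultimosdias}, that the support of the stationary measure is still $\overline{A_{\mathrm{t}}}$ even when $\mfb(S_{\mathrm{t}})=0$, a case where your cylinder argument (which uses $\mfb([\xi_0\dots\xi_n]\cap S_{\mathrm{t}})=\mfb([\xi_0\dots\xi_n])$) would not apply.
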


%

%

We postpone the proof of Proposition~\ref{p.attractingstationary} and deduce the theorem from it.
 
 \subsubsection{Proof of Theorem~\ref{mt.eimportante}}
 In view of Proposition \ref{p.attractingstationary} it is sufficient to prove that
 $\mathfrak{b}(S_{\mathrm{t}})=1$ and  the measure $\pi_{*}\mathfrak{b}$ is continuous. Since the IFS is 
 separable and every Bernoulli measure (with strictly positive weights) is a mixing Markov measure,
 Theorem \ref{t.separableee} implies that $\mathfrak{b}(S_{\mathrm{t}})=1$.
To see that $\pi_{*}\mathfrak{b}$ is continuous  we need to prove that  $\pi_{*}\mathfrak{b}(\{x\})=0$
for every $x\in [0,1]$. Take $x\in [0,1]$ and recall the definition of the set 
$\Sigma_{x}$ in \eqref{e.sigmax}. 
Since $\pi^{-1}(x)\subset \Sigma_{x}$ we have that 
$$
\pi_{*}\mathfrak{b}(\{x\})=\mathfrak{b}(\pi^{-1}(x))\leq \mathfrak{b}(\Sigma_{x})=0,
$$
where the last equality follows from Theorem \ref{t.p.usadoAtdois}.
The proof of Theorem \ref{mt.eimportante} is now complete. 
\hfil \qed  
%

%

\subsubsection{Proof of Proposition \ref{p.attractingstationary}}
We assume that $\mfb=\mfb(p_1, \dots,p_k)$ and write 
$\mathfrak{T}=\mathfrak{T}_{\mfb}$.
We begin by proving two auxiliary lemmas:

\begin{lema}\label{l.BHstationary}
For every stationary measure of $\mathfrak{T}$
 it holds $\mathcal{B}(\mathrm{supp}(\mu))\subset\mathrm{supp}(\mu)$.
\end{lema}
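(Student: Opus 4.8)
The plan is to reduce the inclusion $\mathcal{B}(\mathrm{supp}(\mu)) \subset \mathrm{supp}(\mu)$ to the statement that each individual map $T_i$ carries $\mathrm{supp}(\mu)$ into itself. Indeed, by the definition \eqref{e.BH} of the Barnsley-Hutchinson operator we have $\mathcal{B}(\mathrm{supp}(\mu)) = \bigcup_{i=1}^{k} T_i(\mathrm{supp}(\mu))$, so it suffices to prove $T_i(\mathrm{supp}(\mu)) \subset \mathrm{supp}(\mu)$ for every $i \in \{1,\dots,k\}$.

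To establish this I would argue pointwise, using the characterisation of the support as the set of points all of whose open neighbourhoods have positive $\mu$-measure. Fix $x \in \mathrm{supp}(\mu)$ and an index $i$, and let $V$ be any open neighbourhood of $T_i(x)$. By continuity of $T_i$, the preimage $T_i^{-1}(V)$ is open and contains $x$; since $x \in \mathrm{supp}(\mu)$, it follows that $\mu(T_i^{-1}(V)) > 0$.

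The key step is then to combine stationarity with the strict positivity of the weights. Since $\mathfrak{T}_{\mathfrak{b}}\mu = \mu$, the definition \eqref{e.markov} of the Markov operator yields
$$
\mu(V) = \mathfrak{T}_{\mathfrak{b}}\mu(V) = \sum_{j=1}^{k} p_j \, \mu\big(T_j^{-1}(V)\big) \geq p_i \, \mu\big(T_i^{-1}(V)\big) > 0,
$$
where the inequality uses that $p_i > 0$ and that all the summands are nonnegative. As $V$ was an arbitrary neighbourhood of $T_i(x)$, this shows $T_i(x) \in \mathrm{supp}(\mu)$. Since $x \in \mathrm{supp}(\mu)$ was arbitrary, we obtain $T_i(\mathrm{supp}(\mu)) \subset \mathrm{supp}(\mu)$, and taking the union over $i$ completes the proof.

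I do not expect a genuine obstacle here: the argument is a direct computation, and the only points requiring a little care are the correct use of the support characterisation and the passage from the pointwise statement to the set inclusion (which is immediate). The conclusion relies crucially on the hypothesis $p_i > 0$ for every $i$; note in particular that it holds for \emph{any} stationary measure $\mu$ and makes no use of the condition $\mathfrak{b}(S_{\mathrm{t}}) = 1$.
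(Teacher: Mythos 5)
Your proof is correct and is essentially identical to the paper's: both reduce to showing $T_i(\mathrm{supp}(\mu))\subset\mathrm{supp}(\mu)$ for each $i$, use continuity of $T_i$ to get $\mu(T_i^{-1}(V))>0$ for a neighbourhood $V$ of $T_i(x)$, and then apply stationarity together with $p_i>0$ to conclude $\mu(V)>0$. No issues.
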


\begin{proof}
It is sufficient to show that 
$T_{i}(\mathrm{supp}(\mu))\subset\mathrm{supp}(\mu)$ for every $i$. 
Given $x\in\mathrm{supp}(\mu)$ take a neighborhood $V$ of $T_{i}(x)$. By the choice
of $x$, 
$\mu(T_{i}^{-1}(V))>0$.
Since $\mu$ is a stationary measure we have 
$$
\mu(V)=p_{1}\mu(T_{1}^{-1}(V))+\cdots
+p_{k}\mu(T_{k}^{-1}(V))\geq p_{i}\mu(T_{i}^{-1}(V))>0,
$$
 proving the lemma. 
 \end{proof}

\begin{lema}\label{l.interessante}
Consider the $\mathrm{IFS}(T_{1},\dots T_{k})$.
Then for every sequence $(\mu_{n})$ of probabilities of $\mathcal{M}_{1}(X)$  and every
 $\omega\in S_{\mathrm{t}}$ it holds
$$
\lim_{n\to \infty}T_{\omega_{0}*}\ldots_\ast T_{\omega_{n}*}\mu_{n}=\delta_{\pi(\omega)}.
$$
\end{lema}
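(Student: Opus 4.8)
The plan is to reduce everything to the elementary fact that the pushforward measure $\nu_n \eqdef T_{\omega_0 *}\cdots T_{\omega_n *}\mu_n$ is carried by a set that shrinks down to the single point $\pi(\omega)$, so that no information about the individual measures $\mu_n$ survives in the limit. First I would record the functorial identity $T_{\omega_0 *}\cdots T_{\omega_n *}\mu_n = (T_{\omega_0}\circ\cdots\circ T_{\omega_n})_* \mu_n$, which exhibits $\nu_n$ as the image of a probability measure on $X$ under the composition $T_{\omega_0}\circ\cdots\circ T_{\omega_n}$. Consequently $\mathrm{supp}(\nu_n) \subset K_n$, where $K_n \eqdef T_{\omega_0}\circ\cdots\circ T_{\omega_n}(X)$.

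Next I would use the hypothesis $\omega\in S_{\mathrm{t}}$. By the definition in \eqref{e.stweakly} we have $\mathrm{diam}(K_n)\to 0$, and by \eqref{spine} together with the definition of $\pi$ in \eqref{e.pi} the sets $K_n$ are nested and decreasing with $\bigcap_{n} K_n = \{\pi(\omega)\}$; in particular $\pi(\omega)\in K_n$ for every $n$. Combining these two observations, for every $\epsilon>0$ there is $N$ such that $K_n\subset \overline{B_\epsilon(\pi(\omega))}$ for all $n\geq N$, simply because $K_n$ contains $\pi(\omega)$ and has diameter smaller than $\epsilon$.

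To conclude weak$*$ convergence $\nu_n\to \delta_{\pi(\omega)}$, I would test against an arbitrary continuous function $f\colon X\to \Real$. Using that $\nu_n$ is a probability measure carried by $K_n$,
$$
\Big| \int f\, d\nu_n - f(\pi(\omega)) \Big|
= \Big| \int_{K_n} \big( f - f(\pi(\omega)) \big)\, d\nu_n \Big|
\le \sup_{y\in K_n} \abs{f(y)-f(\pi(\omega))}.
$$
Since $f$ is uniformly continuous on the compact space $X$ and $K_n$ collapses to $\pi(\omega)$, the right-hand side tends to $0$. As $f$ was arbitrary and $\int f\, d\delta_{\pi(\omega)}=f(\pi(\omega))$, this is exactly the asserted weak$*$ convergence.

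There is essentially no hard step here; the only point requiring emphasis is that the estimate is \emph{uniform} in the choice of the probabilities $\mu_n$, which is precisely why the statement is phrased with a varying sequence $(\mu_n)$ rather than a fixed measure. The mechanism is that all of the mass of $\nu_n$ is trapped inside $K_n$ regardless of where $\mu_n$ is located, so the weak hyperbolicity of $\omega$ alone forces the limit to be the Dirac mass at $\pi(\omega)$.
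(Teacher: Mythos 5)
Your proof is correct and is essentially the paper's argument: the paper performs the same change of variables, integrating $g\circ T_{\omega_0}\circ\cdots\circ T_{\omega_n}$ against $\mu_n$ and bounding via uniform continuity of $g$ together with $\mathrm{diam}\big(T_{\omega_0}\circ\cdots\circ T_{\omega_n}(X)\big)\to 0$, which is exactly your observation that all the mass of $\nu_n$ is trapped in $K_n$ independently of $\mu_n$.
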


\begin{proof}
Consider a sequence of probabilities $(\mu_n)$ and  $\omega\in S_{\mathrm{t}}$. Fix any 
$g\in C^{0}(X)$. Then  given any $\epsilon>0$ there is $\delta$ 
such that 
$$
|g(y)-g\circ\pi(\omega)|<\epsilon
\quad \mbox{for all $y\in X$ with $d(y,\pi(\omega))<\delta$.}
$$
Since $\omega \in S_{\mathrm{t}}$ there is $n_{0}$
such that  $d(T_{\omega_{0}}\circ\cdots \circ T_{\omega_{n}}(x),\pi(\omega))< \delta$ for every
$x\in X$ and every $n\geq n_{0}$. Therefore 
for $n\geq n_{0}$ we have
\[
\begin{split}
\left|
g\circ \pi(\omega)-
\int g\, dT_{\omega_{0}*}\ldots_\ast T_{\omega_{n}*}\mu_{n}\right|&=
\left|
\int 
g\circ \pi(\omega)\,d\mu_{n}-
\int g\circ T_{\omega_{0}}\circ\cdots\circ T_{\omega_{n}}(x)\,d \mu_{n}
\right|\\
&\leq \int 
|g\circ \pi(\omega)- g\circ T_{\omega_{0}}\circ\cdots\circ T_{\omega_{n}}(x)|\, d\mu_{n}
\le \epsilon.
\end{split}
\]
This implies that
$$
\lim_{n\to \infty}\int g\, dT_{\omega_{0}*}\ldots_\ast T_{\omega_{n}*}\mu_{n}=g\circ\pi(\omega)
$$
Since this holds for every continuous map $g$ the lemma follows.
\end{proof}

We will show that $
\lim_{n\to\infty}\mathfrak{T}^{n}\nu=\pi_{*}\mathfrak{b}
$
for every $\nu \in \mathcal{M}_{1}(X)$.
 In particular,  by the 
continuity of $\mathfrak{T}$, $\mathfrak{T}\pi_{*}\mathfrak{b}=\pi_{*}\mathfrak{b}$.

Note that from  the definition of the Markov operator in \eqref{e.markov}, for  every $\nu\in \mathcal{M}_{1}(X)$ and every continuous map $f\in C^{0}(X)$ 
it holds 
\begin{equation}
 \label{e.operatormarkov}
\int fd(\mathfrak{T}^{n}\nu)
=\sum_{\xi_0,\dots ,\xi_{n-1}}p_{\xi_{0}}p_{\xi_{1}}\ldots p_{\xi_{n-1}}\, 
\int  f \, d T_{\xi_{0}*} T_{\xi_{1}*} \ldots _\ast T_{\xi_{n-1}*}\nu.
\end{equation}

Fixed $\nu \in \mathcal{M}_{1}(X)$ 
consider the sequence of functions $F_{n}:\Sigma_{k}^+\rightarrow \mathds{R}$ defined
 by
$$
F_{n}(\xi) \eqdef\displaystyle\int f \, d T_{\xi_{0}*} T_{\xi_{1}*} \ldots _\ast T_{\xi_{n-1}*}\nu. 
$$
Since the map
$F_{n}$  is constant in the cylinders $[\xi_{0},\ldots,\xi_{n-1}]$,
it is a measurable function.
From this property,  
 equation~\eqref{e.operatormarkov}, and the definition of the Bernoulli measure $\mathfrak{b}$ we have
$$
\int f\, d(\mathfrak{T}^{n}\nu)=
\int F_{n}\, d\mathfrak{b}.
$$

By hypothesis $\mathfrak{b}(S_{\mathrm{t}})=1$, thus applying Lemma~\ref{l.interessante} 
to the constant sequence $\mu_{n}=\nu$  we have that 
\begin{equation}\label{e.limitFn}
\lim_{n\rightarrow\infty} F_{n}(\xi)=f\circ \pi(\xi)
\quad 
\mbox{for  $\mathfrak{b}$-a.e. $\xi$}.
\end{equation}
Since $|F_{n} (\xi)|\leq \|f\|$, from \eqref{e.limitFn} using the dominated convergence theorem 
we get
$$
\lim_{n\rightarrow \infty}\int f\, d(\mathfrak{T}^{n}\nu)
=\lim_{n\rightarrow\infty}\int F_{n}\, d\mathfrak{b}
=\int f\circ\pi \, d\mathfrak{b}
=\int fd\pi_{*}\mathfrak{b}.
$$
Since the previous equality holds for every continuous map $f$ 
it follows  that $\pi_\ast \mfb$ is an attracting measure.

It remains to see that $\mathrm{supp}(\pi_{*}\mathfrak{b})=\overline{A_{\mathrm{t}}}$.
For that note the following equalities
\begin{equation}\label{e.characterisationestationaty}
\pi_{*}\mathfrak{b}(A_\mathrm{t})= \mathfrak{b}(\pi^{-1} (A_\mathrm{t})) =
\mathfrak{b}(S_\mathrm{t})=1
\end{equation}
that imply $\mathrm{supp} (\pi_{*}\mathfrak{b})\subset \overline{A_\mathrm{t}}$. 

To get $\mathrm{supp} (\pi_{*}\mathfrak{b})\supset \overline{A_\mathrm{t}}$
recall  that, by  Proposition~\ref{p.existence},
every $\mathcal{B}$-invariant compact set contains a fixed point of 
$\mathcal{B}$.
By Lemma~\ref{l.BHstationary}
we have
$\mathcal{B}(\mathrm{supp}(\pi_{*}\mathfrak{b}))\subset \mathrm{supp}(\pi_{*}\mathfrak{b})$.
Hence $\mathrm{supp}(\pi_{*}\mathfrak{b}))$ contains a fixed point of $\mathcal{B}$.
As $\overline{A_\mathrm{t}}$ is a minimum fixed point of $\mathcal{B}$ (see 
Proposition~\ref{p.l.minimality})
this implies that
$\overline{A_\mathrm{t}}\subset \mathrm{supp}(\pi_{*}\mathfrak{b})$. 
Thus $\mathrm{supp}(\pi_{*}\mathfrak{b})=\overline{A_{\mathrm{t}}}$, completing the proof of the proposition.
\hfil \qed

\smallskip

The previous proposition provides a (unique) stationary measure whose
 support is  $\overline{A_{\mathrm{t}}}$. To prove that the support of
 this measure is the closure
 of the target  we use the characterisation of the stationary measure
 in \eqref{e.characterisationestationaty}.
 Next proposition
 claims that the support of the stationary measure of an asymptotically stable Markov 
 operator of an IFS with $S_{\mathrm{t}}\neq \emptyset$ always is
 $\overline{A_{\mathrm{t}}}$, even when
   $\mathfrak{b}(S_{\mathrm{t}})=0$ (recall 
 that either $\mathfrak{b}(S_{\mathrm{t}})=1$ or $\mathfrak{b}(S_{\mathrm{t}})=0$). 

\begin{prop}\label{p.ultimosdias}
Consider an $\mathrm{IFS}(T_{1},\dots,T_{k};\mathfrak{b})$ with 
probabilities defined on a compact 
metric space whose Markov 
operator  $\mathfrak{T}_\mfb$
is asymptotically stable and let $\mu$ be  its stationary measure.
If $S_{\mathrm{t}}\neq \emptyset$ then
$\mathrm{supp}({\mu})=\overline{A_{\mathrm{t}}}$.
\end{prop}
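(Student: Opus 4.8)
The plan is to prove the two inclusions $\overline{A_{\mathrm{t}}}\subset\mathrm{supp}(\mu)$ and $\mathrm{supp}(\mu)\subset\overline{A_{\mathrm{t}}}$ separately. The first inclusion does not use asymptotic stability at all, and is exactly the argument already carried out at the end of the proof of Proposition~\ref{p.attractingstationary}. Indeed, since $\mu$ is stationary, Lemma~\ref{l.BHstationary} gives $\mathcal{B}(\mathrm{supp}(\mu))\subset\mathrm{supp}(\mu)$, so $\mathrm{supp}(\mu)$ is a compact $\mathcal{B}$-invariant set. By Proposition~\ref{p.existence} the set $(\mathrm{supp}(\mu))^{*}$ is a fixed point of $\mathcal{B}$ contained in $\mathrm{supp}(\mu)$, and since $\overline{A_{\mathrm{t}}}$ is the minimum fixed point of $\mathcal{B}$ (Proposition~\ref{p.l.minimality}, which applies because $S_{\mathrm{t}}\neq\emptyset$) we conclude $\overline{A_{\mathrm{t}}}\subset(\mathrm{supp}(\mu))^{*}\subset\mathrm{supp}(\mu)$.

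For the reverse inclusion I would exploit asymptotic stability by choosing a \emph{convenient} initial measure rather than relying on $\mathfrak{b}(S_{\mathrm{t}})=1$. Pick any point $p\in A_{\mathrm{t}}\subset\overline{A_{\mathrm{t}}}$ (possible since $S_{\mathrm{t}}\neq\emptyset$) and consider the Dirac mass $\delta_{p}$. Because $T_{i}(A_{\mathrm{t}})\subset A_{\mathrm{t}}$ for every $i$ (Scholium~\ref{sc.Atinv}), and hence $T_{i}(\overline{A_{\mathrm{t}}})\subset\overline{A_{\mathrm{t}}}$ by continuity, an immediate induction shows that $\mathfrak{T}_{\mfb}^{n}\delta_{p}$ is a finite convex combination of Dirac masses at points of the form $T_{\eta_{0}}\circ\cdots\circ T_{\eta_{n-1}}(p)$, all of which lie in $\overline{A_{\mathrm{t}}}$. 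Hence $\mathfrak{T}_{\mfb}^{n}\delta_{p}(\overline{A_{\mathrm{t}}})=1$ for every $n$. By the assumed asymptotic stability, $\mathfrak{T}_{\mfb}^{n}\delta_{p}\to\mu$ in the weak$\ast$ topology, and since $\overline{A_{\mathrm{t}}}$ is closed the portmanteau theorem yields $\mu(\overline{A_{\mathrm{t}}})\geq\limsup_{n}\mathfrak{T}_{\mfb}^{n}\delta_{p}(\overline{A_{\mathrm{t}}})=1$. Therefore $\mu$ is concentrated on $\overline{A_{\mathrm{t}}}$, that is $\mathrm{supp}(\mu)\subset\overline{A_{\mathrm{t}}}$. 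Combining the two inclusions gives $\mathrm{supp}(\mu)=\overline{A_{\mathrm{t}}}$.

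The conceptual heart of the argument, and the point where the hypothesis $\mathfrak{b}(S_{\mathrm{t}})=0$ causes no trouble, is the observation that asymptotic stability lets us compute the limit $\mu$ starting from \emph{any} initial measure we like; choosing one supported inside the invariant compact set $\overline{A_{\mathrm{t}}}$ traps all forward iterates there, so the limit cannot escape. The only mild technical points to check are the elementary support identity $\mathrm{supp}(\mathfrak{T}_{\mfb}^{n}\delta_{p})=\mathcal{B}^{n}(\{p\})$, which uses that all weights $p_{i}$ are strictly positive, and the correct (closed-set) direction of the portmanteau inequality. I do not expect a genuine obstacle here; the subtlety is purely in selecting the right starting measure so that the full-measure condition on $S_{\mathrm{t}}$ is never needed.
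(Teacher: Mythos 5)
Your proposal is correct, and the reverse inclusion is handled by a genuinely different (though closely related) route from the paper's. Both proofs get $\overline{A_{\mathrm{t}}}\subset\mathrm{supp}(\mu)$ from Lemma~\ref{l.BHstationary} together with Propositions~\ref{p.existence} and \ref{p.l.minimality}, and both exploit asymptotic stability by iterating a Dirac mass $\delta_{x}$ at a point $x\in A_{\mathrm{t}}$. The difference is in which half of the portmanteau theorem you invoke and on which set. The paper fixes $p\in\mathrm{supp}(\mu)$ and an open neighbourhood $V$ of $p$, uses the open-set inequality $\liminf_{n}\mathfrak{T}^{n}\delta_{x}(V)\geq\mu(V)>0$ to produce a finite composition $T_{\xi_{0}}\circ\cdots\circ T_{\xi_{n_{0}-1}}$ sending $x$ into $V$, and then concludes $V\cap A_{\mathrm{t}}\neq\emptyset$ from the invariance of $A_{\mathrm{t}}$; this is a pointwise, neighbourhood-by-neighbourhood argument. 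You instead apply the closed-set inequality $\limsup_{n}\mathfrak{T}^{n}\delta_{p}(\overline{A_{\mathrm{t}}})\leq\mu(\overline{A_{\mathrm{t}}})$ to the single forward-invariant closed set $\overline{A_{\mathrm{t}}}$, obtaining $\mu(\overline{A_{\mathrm{t}}})=1$ in one stroke. Your version is more direct and isolates the structural fact doing the work (forward invariance of $\overline{A_{\mathrm{t}}}$, via Scholium~\ref{sc.Atinv} and continuity, traps all iterates of $\delta_{p}$); the paper's version has the mild virtue of exhibiting explicitly that every neighbourhood of every point of $\mathrm{supp}(\mu)$ meets $A_{\mathrm{t}}$ itself, not merely its closure. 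The technical points you flag (strict positivity of the weights for the support identity, and the direction of the portmanteau inequality) are exactly the right ones, and neither causes a problem.
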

\begin{proof}
The inclusion $\mathrm{supp}({\mu})\supset \overline{A_{\mathrm{t}}}$
follows from Lemma \ref{l.BHstationary}. 
To prove the inclusion  ``$\subset$'' take any point $p\in \mathrm{supp}({\mu})$ and an 
open neighbourhood $V$ of $p$. We need to see that $V\cap A_{\mathrm{t}}\neq\emptyset$.
For this take any point $x\in A_{\mathrm{t}}$. Since $\mathfrak{T}=\mathfrak{T}_\mfb$
is asymptotically stable 
Alexandrov's theorem (see \cite[page 60]{Bara})
 implies that 
$$
\liminf_{n} \mathfrak{T}^{n}\delta_{x}(V)\geq \mu(V)>0.
$$
Hence there is $n_{0}$ such that $\mathfrak{T}^{n_{0}}\delta_{x}(V)>0$.
By definition of the Markov operator we have that 
$$
\mathfrak{T}^{n_{0}}\delta_{x}(V)=\sum_{\xi_0,\dots ,\xi_{n_{0}-1}}p_{\xi_{0}}p_{\xi_{1}}\ldots p_{\xi_{n_{0}-1}}\, 
 T_{\xi_{0}*} T_{\xi_{1}*} \ldots _\ast T_{\xi_{n_{0}-1}*}\delta_{x}(V).
$$
Therefore there is a finite sequence $\xi_0\dots \xi_{n_{0}-1}$ such that 
$$
\delta_{x}(T_{\xi_{n_{0}-1}}^{-1}\circ \dots \circ T_{\xi_{0}}^{-1}(V))>0
$$
and thus
 $x\in T_{\xi_{n_{0}-1}}^{-1}\circ \dots \circ T_{\xi_{0}}^{-1}(V)$.
The invariance of $A_{\mathrm{t}}$ now implies that $V\cap A_{\mathrm{t}}\neq \emptyset$,
proving the proposition.
\end{proof}

\subsection{Stationary measures for recurrent IFSs in $[0,1]$}
\label{ss.fimdehora}

 In this section
 we prove Theorem~\ref{mt.generalizedB}.
 For that we consider a recurrent $\mathrm{IFS}(T_{1},\dots T_{k};\mathbb{P}^{+})$
defined on a compact metric space $X$, where  
$\mathbb{P}^+$ is the Markov probability 
associated to $(P=(p_{i,j}), \bar p=(p_i))$.
We also consider the set $\widehat{X}= X \times \{1,\dots,k\}$ and
the  (generalised) Markov operator $\mathfrak{S}=\mathfrak{S}_{\mathbb{P}^+}$ (see 
\eqref{e.generalised}) and the 
generalised coding map 
$\varpi \colon S_{\mathrm{t}} \to \widehat{X}$ given by  $\varpi(\xi)\eqdef(\pi(\xi),\xi_{0})$
 (see \eqref{e.generalisedcodingmap}) of the IFS. A final ingredient is the
 inverse Markov measure $\mathbb{P}^{-}$ associated to $\mathbb{P}^{+}$
defined in \eqref{e.markovinverse}.
 
To prove Theorem~\ref{mt.generalizedB} we need to see that
every
 $\mathrm{IFS}(T_{1},\dots T_{k};\mathbb{P}^{+})$ 
 such that
the
inverse Markov measure $\mathbb{P}^{-}$
 is  mixing and splits the $\mathrm{IFS}$
in some non-trivial closed interval $J$ satisfies
$$
\lim_{n\to \infty}\mathfrak{S}^{n}(\hat{\mu})=\varpi_{*}\mathbb{P}^{-}
\quad
\mbox{for every
$\hat{\mu}\in \mathcal{M}_{1}([0,1]\times \{1,\dots,k\})$.}
$$
The main step of the proof of Theorem~\ref{mt.generalizedB} is the following result whose proof is postponed.

\begin{teo}\label{t.p.t.recurrent}
Consider a recurrent  $\mathrm{IFS}(T_{1},\dots T_{k};\mathbb{P}^{+})$ defined on a compact 
metric space $X$ such that
 $\mathbb{P}^{-}$ is mixing and 
 $\mathbb{P}^{-}(S_{\mathrm{t}})=1$.
Then
$$
\lim_{n\to \infty}\mathfrak{S}^{n}(\widehat{\mu})=\varpi_{*}\mathbb{P}^{-}
\quad
\mbox{for every
$\widehat{\mu}\in \mathcal{M}_{1}(\widehat{X})$.}
$$
In particular, $\varpi_{*}\mathbb{P}^{-}$ is the unique stationary measure of $\mathfrak{S}$.
%
\end{teo}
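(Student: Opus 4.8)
The plan is to imitate the proof of Proposition~\ref{p.attractingstationary}, expanding the iterates of $\mathfrak{S}=\mathfrak{S}_{\mathbb{P}^+}$ into a sum over finite words and recognising that sum as an integral against $\mathbb{P}^-$; the new feature, forced by the matrix structure, is an oscillating weight that only the mixing hypothesis can control. Note first that $\varpi$ is defined on $S_{\mathrm{t}}$, which has full $\mathbb{P}^-$-measure, so $\varpi_*\mathbb{P}^-$ is a well-defined Borel probability on $\widehat X$.

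First I would compute the sections of $\mathfrak{S}^n\widehat\mu$. Writing $\widehat\mu=(\mu_1,\dots,\mu_k)$ and iterating the defining identity $(\mathfrak{S}\widehat\nu)_{j}=\sum_i p_{ij}\,T_{j*}\nu_i$, one gets
\begin{equation*}
(\mathfrak{S}^n\widehat\mu)_{j_0}=\sum_{j_1,\dots,j_n} p_{j_1 j_0}p_{j_2 j_1}\cdots p_{j_n j_{n-1}}\; T_{j_0*}T_{j_1*}\cdots T_{j_{n-1}*}\,\mu_{j_n}.
\end{equation*}
The key algebraic observation is that the weight equals $\mathbb{P}^-([j_0 j_1\dots j_n])/p_{j_n}$, since $\mathbb{P}^-([j_0\dots j_n])=\mathbb{P}^+([j_n\dots j_0])=p_{j_n}\prod_{m=1}^n p_{j_m j_{m-1}}$. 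Hence, for $f\in C^0(\widehat X)$ with sections $f_{j}=f(\cdot,j)$, using $\int f\,d\widehat\nu=\sum_j\int_X f_j\,d\nu_j$ and setting
\begin{equation*}
F_n(\xi)\eqdef \frac{1}{p_{\xi_n}}\int_X f_{\xi_0}\circ T_{\xi_0}\circ\cdots\circ T_{\xi_{n-1}}\,d\mu_{\xi_n},
\end{equation*}
which is constant on cylinders $[\xi_0\dots\xi_n]$ and hence measurable, one obtains the exact identity $\int f\,d(\mathfrak{S}^n\widehat\mu)=\int_{\Sigma_k^+}F_n\,d\mathbb{P}^-$. Next I would analyse $F_n$ pointwise on $S_{\mathrm{t}}$: for $\xi\in S_{\mathrm{t}}$ the diameter of $T_{\xi_0}\circ\cdots\circ T_{\xi_{n-1}}(X)$ tends to $0$, so $f_{\xi_0}\circ T_{\xi_0}\circ\cdots\circ T_{\xi_{n-1}}\to f_{\xi_0}(\pi(\xi))$ uniformly on $X$. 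Isolating this value, write $F_n=G_n+R_n$ with
\begin{equation*}
G_n(\xi)\eqdef \frac{\mu_{\xi_n}(X)}{p_{\xi_n}}\,f_{\xi_0}(\pi(\xi)).
\end{equation*}
Using $\mu_{\xi_n}(X)\le 1$ and the strict positivity of the stationary vector (a consequence of irreducibility), the remainder obeys $|R_n(\xi)|\le (\min_j p_j)^{-1}\,\big\|f_{\xi_0}\circ T_{\xi_0}\circ\cdots\circ T_{\xi_{n-1}}-f_{\xi_0}(\pi(\xi))\big\|$, so $R_n\to 0$ pointwise on $S_{\mathrm{t}}$ with $|R_n|\le 2\|f\|/\min_j p_j$; dominated convergence then yields $\int R_n\,d\mathbb{P}^-\to 0$, leaving $\lim_n\int G_n\,d\mathbb{P}^-$ to evaluate.

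Here is the crux, and the step I expect to be the main obstacle: unlike the Bernoulli case of Proposition~\ref{p.attractingstationary} (where the analogous weight is simply $1$), $G_n$ carries the factor $\mu_{\xi_n}(X)/p_{\xi_n}$, which genuinely oscillates with $\xi_n$ and so destroys pointwise convergence of $F_n$. The remedy is decorrelation through mixing. Setting $h(\xi)\eqdef f_{\xi_0}(\pi(\xi))=f(\varpi(\xi))$ and $\psi(\eta)\eqdef \mu_{\eta_0}(X)/p_{\eta_0}$, one has $G_n=h\cdot(\psi\circ\sigma^n)$, with both factors bounded and hence in $L^2(\mathbb{P}^-)$. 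Since $(\sigma,\mathbb{P}^-)$ is mixing, its functional form gives
\begin{equation*}
\int G_n\,d\mathbb{P}^-=\int h\,(\psi\circ\sigma^n)\,d\mathbb{P}^- \longrightarrow \Big(\int h\,d\mathbb{P}^-\Big)\Big(\int\psi\,d\mathbb{P}^-\Big).
\end{equation*}
The second factor equals $1$, because $\mathbb{P}^-([j])=p_j$ gives $\int\psi\,d\mathbb{P}^-=\sum_j (\mu_j(X)/p_j)\,p_j=\sum_j\mu_j(X)=\widehat\mu(\widehat X)=1$, while the first is $\int f\circ\varpi\,d\mathbb{P}^-=\int f\,d(\varpi_*\mathbb{P}^-)$. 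Combining everything, $\int f\,d(\mathfrak{S}^n\widehat\mu)\to\int f\,d(\varpi_*\mathbb{P}^-)$ for every $f\in C^0(\widehat X)$, which is the asserted weak$*$ convergence. Finally, uniqueness is immediate: if $\widehat\nu$ is stationary then, by continuity of $\mathfrak{S}$, $\widehat\nu=\mathfrak{S}^n\widehat\nu\to\varpi_*\mathbb{P}^-$, so $\widehat\nu=\varpi_*\mathbb{P}^-$.
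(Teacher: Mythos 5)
Your proposal is correct, and it reaches the conclusion by a genuinely different route from the paper's. The paper first normalises the sections ($\overline{\mu}_i=\mu_i/\mu_i(X)$, which forces the preliminary hypothesis $\mu_i(X)>0$), proves a quantitative estimate (Lemma~\ref{l.primitiveee}) of the form $\limsup_n|\int g\,d(\mathfrak{S}^{n}\widehat\mu)_j-\int_{[j]}(g\circ\pi)\,d\mathbb{P}^-|\le k\|g\|\max_i|\mu_i(X)-p_i|$ by splitting $\mu_{\xi_n}(X)=(\mu_{\xi_n}(X)-p_{\xi_n})+p_{\xi_n}$, and then removes the error term by a two-step limit: it invokes the Perron--Frobenius theorem (primitivity of $P$ coming from the mixing of $\mathbb{P}^-$) to show that the mass vector $((\mathfrak{S}^{n_1}\widehat\mu)_i(X))_i=\widehat p\,P^{n_1}$ converges to $\bar p$, applies the lemma to $\mathfrak{S}^{n_1}\widehat\mu$, and lets $n_1\to\infty$. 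You instead keep the exact identity $\int f\,d(\mathfrak{S}^n\widehat\mu)=\int F_n\,d\mathbb{P}^-$ and isolate the obstruction as the factor $\psi\circ\sigma^n$ with $\psi(\eta)=\mu_{\eta_0}(X)/p_{\eta_0}$, killing it by the correlation-decay (functional) form of mixing of $(\sigma,\mathbb{P}^-)$ applied to the pair $h=f\circ\varpi$ and $\psi$. The two arguments use the same core ingredients ($\mathbb{P}^-(S_{\mathrm{t}})=1$, the pointwise collapse $T_{\xi_0*}\cdots T_{\xi_{n-1}*}\nu\to\delta_{\pi(\xi)}$ as in Lemma~\ref{l.interessante}, dominated convergence, and the mixing hypothesis), but your treatment of the terminal mass is cleaner: it avoids the normalisation, the positivity hypothesis on the sections, the auxiliary iterate $\mathfrak{S}^{n_1}\widehat\mu$, and the explicit appeal to Perron--Frobenius (whose conclusion is anyway subsumed by mixing applied to the pair $(\mathbf{1}_{[i]},\mathbf{1}_{[j]})$). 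Two very minor points you leave implicit but should record: the division by $p_{\xi_n}$ and the bound on $\psi$ require the strict positivity of $\bar p$ (you do note this follows from irreducibility), and the claim that $\varpi_*\mathbb{P}^-$ is itself stationary follows from the weak$*$ continuity of $\mathfrak{S}$ applied to $\mathfrak{S}^{n}\widehat\mu\to\varpi_*\mathbb{P}^-$; your uniqueness argument only shows that any stationary measure must equal the limit.
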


\begin{proof}[Proof of Theorem \ref{mt.generalizedB}]
Since $\mathbb{P}^{-}$ is mixing and splits the $\mathrm{IFS}$ in some non-trivial interval
it follows from Theorem~\ref{mt.abundance}
that
$\mathbb{P}^{-}(S_{\mathrm{t}})=1$.
 Thus the
theorem follows from
 Theorem~\ref{t.p.t.recurrent}. 
 \end{proof}

 \begin{proof}[Proof of Theorem \ref{t.p.t.recurrent}]  
Given a function  $\widehat f\colon \widehat{X}\to \mathbb{R}$
we define its $i$-section
$f_i\colon X \to \mathbb{R}$
 by 
$f_{i}(x)\eqdef \widehat f(x,i)$ and write $\widehat f= \langle f_{1},\dots,f_{k} \rangle$.  
We need to see that for every 
measure
$\widehat{\mu}=(\mu_{1},\dots,\mu_{k})\in \mathcal{M}_{1}(\widehat{X})$
 and every continuous 
 function $\widehat{f}=\langle f_{1},\dots,f_{k}\rangle\in C^{0}(\widehat{X} )$ it holds 
 \begin{equation}\label{e.atra}
\lim_{n\to \infty}\int \widehat{f}\, d\mathfrak{S}^{n}({\widehat{\mu}})=\int \widehat{f}  \, d\varpi_{*}\mathbb{P}^{-}.
  \end{equation}

By definition, it follows that
$$
\int \widehat f\, d\widehat\mu=\sum_{i=1}^{k} \int f_{i}\, d\mu_{i},
\quad 
\mbox{where} 
\quad
\widehat{\mu}=(\mu_{1},\dots,\mu_{k}),
$$
and hence
 \begin{equation}\label{e.3101}
\int \widehat f\, d\mathfrak{S}^{n}({\widehat{\mu}})=\sum_{j=1}^{k}\int f_{j}
\,d  (\mathfrak{S}^{n}({\widehat{\mu}}))_{j},
\quad 
\mathfrak{S}^{n}({\widehat{\mu}})=\big( (\mathfrak{S}^{n}({\widehat{\mu}}))_{1}, \dots, (\mathfrak{S}^{n}({\widehat{\mu}}))_{k}\big).
 \end{equation}


To get the convergence of the integrals of the sum in  \eqref{e.3101}  
we need a preparatory lemma. First,
denote by $\Vert g\Vert $ the uniform
 norm of a continuous function $g\colon X \to \mathbb{R}$.
\begin{lema}\label{l.primitiveee}
Consider $\widehat{\mu}=(\mu_{1},\dots,\mu_{k})\in \mathcal{M}_{1}(\widehat{X})$ such that 
$\mu_{i}(X)>0$ for every $i\in \{1,\dots, k\}$. Then for every $g\in C^{0}(X)$ it holds
$$
\limsup_{n}\left|\int g \, d (\mathfrak{S}^{n}({\widehat{\mu}}))_{j}
-\int_{[j]} (g\circ\pi)\, d\,\mathbb{P}^{-} \right|\leq k\, \Vert g\Vert\, \max_{i}|\mu_{i}(X)-p_i|,
$$
where $\bar p=(p_1,\dots,p_k)$ is the unique stationary vector of $P$.
\end{lema}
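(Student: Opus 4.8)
The plan is to reduce the statement to an estimate for an integral over $\Sigma_k^+$ against the inverse measure $\mathbb{P}^-$, exploiting the time-reversal relation $q_{ij}=\frac{p_j}{p_i}\,p_{ji}$ that defines $\mathbb{P}^-$. First I would unwind the operator $\mathfrak{S}=\mathfrak{S}_{\mathbb{P}^+}$. Since its $j$-section satisfies $(\mathfrak{S}(\widehat\mu))_j=\sum_i p_{ij}\,T_{j*}\mu_i$, an immediate induction gives
$$
(\mathfrak{S}^n(\widehat\mu))_j=\sum_{i_0,\dots,i_{n-1}} p_{i_0 i_1}\cdots p_{i_{n-1}j}\,\big(T_j\circ T_{i_{n-1}}\circ\cdots\circ T_{i_1}\big)_*\mu_{i_0}.
$$
Re-indexing by the reversed string $a_0=j,\ a_1=i_{n-1},\dots,a_n=i_0$ and using $q_{a_{\ell-1}a_\ell}=\frac{p_{a_\ell}}{p_{a_{\ell-1}}}p_{a_\ell a_{\ell-1}}$, the product of weights telescopes to $\prod_{\ell=1}^n p_{a_\ell a_{\ell-1}}=p_{a_n}^{-1}\,\mathbb{P}^-([a_0\dots a_n])$. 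This yields the key identity
$$
\int g\, d(\mathfrak{S}^n(\widehat\mu))_j=\int_{[j]} G_n\, d\mathbb{P}^-,\qquad G_n(\omega)\eqdef\frac{1}{p_{\omega_n}}\int g\big(T_{\omega_0}\circ\cdots\circ T_{\omega_{n-1}}(x)\big)\,d\mu_{\omega_n}(x),
$$
where $G_n$ is a bounded cylinder function on $\Sigma_k^+$.

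Next I would run the analytic step. Because $\mathbb{P}^-(S_{\mathrm{t}})=1$, for $\mathbb{P}^-$-a.e.\ $\omega$ the composition $T_{\omega_0}\circ\cdots\circ T_{\omega_{n-1}}$ converges uniformly in the base point to the constant $\pi(\omega)$, so $\sup_x|g(T_{\omega_0}\circ\cdots\circ T_{\omega_{n-1}}(x))-g(\pi(\omega))|\to 0$. I would then write $G_n(\omega)-g(\pi(\omega))$ as the sum of $p_{\omega_n}^{-1}\big[\int g\circ(\cdots)\,d\mu_{\omega_n}-g(\pi(\omega))\,\mu_{\omega_n}(X)\big]$ and $g(\pi(\omega))\big[\mu_{\omega_n}(X)/p_{\omega_n}-1\big]$. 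Since $p_{\omega_n}\ge\min_i p_i>0$ (irreducibility forces all $p_i>0$) and $\mu_{\omega_n}(X)\le1$, the first term is bounded by $(\min_i p_i)^{-1}\sup_x|g\circ(\cdots)-g(\pi(\omega))|$, which is dominated by $2\|g\|(\min_i p_i)^{-1}$ and tends to $0$ on $S_{\mathrm{t}}$; dominated convergence then makes its $\mathbb{P}^-$-integral over $[j]$ vanish in the limit.

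The main obstacle is the second (mass-mismatch) term, which does not vanish and must be bounded by exactly $k\,\|g\|\,\max_i|\mu_i(X)-p_i|$. Here I would factor out $\|g\|\max_i|\mu_i(X)-p_i|$ and be left with controlling $\int_{[j]} p_{\omega_n}^{-1}\,d\mathbb{P}^-=\sum_{a}\frac{1}{p_a}\,\mathbb{P}^-(\{\omega_0=j,\ \omega_n=a\})=\sum_a \frac{p_j}{p_a}(Q^n)_{ja}$. The clean way through is the reversibility identity $(Q^n)_{ja}=\frac{p_a}{p_j}(P^n)_{aj}$, proved by a one-line induction from $q_{ij}=\frac{p_j}{p_i}p_{ji}$; it collapses the sum to $\sum_a (P^n)_{aj}\le k$, since every entry of the stochastic matrix $P^n$ lies in $[0,1]$. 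Combining the two terms gives the asserted $\limsup$ bound. Note that the hypothesis $\mu_i(X)>0$ is not actually needed for this estimate; it is a standing assumption carried for the application in the proof of Theorem~\ref{t.p.t.recurrent}.
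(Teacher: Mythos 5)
Your proof is correct and follows essentially the same route as the paper's: both unwind $(\mathfrak{S}^{n}\widehat{\mu})_{j}$ into a sum over words, identify the weights $p_{\xi_{n}}p_{\xi_{n}\xi_{n-1}}\cdots p_{\xi_{1}j}$ with the $\mathbb{P}^{-}$-masses of the cylinders $[j\xi_{1}\dots\xi_{n}]$, split off a mass-mismatch term controlled by $k\,\Vert g\Vert\,\max_{i}|\mu_{i}(X)-p_{i}|$ via the column sums of $P^{n}$, and send the main term to $\int_{[j]}(g\circ\pi)\,d\mathbb{P}^{-}$ using the $\mathbb{P}^{-}(S_{\mathrm{t}})=1$ hypothesis, uniform convergence of the compositions on $S_{\mathrm{t}}$ (Lemma~\ref{l.interessante}), and dominated convergence. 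The only real difference is bookkeeping: the paper first normalises each $\mu_{i}$ to the probability $\overline{\mu}_{i}=\mu_{i}/\mu_{i}(X)$ and splits the scalar weight $\mu_{\xi_{n}}(X)=(\mu_{\xi_{n}}(X)-p_{\xi_{n}})+p_{\xi_{n}}$, whereas you keep $\mu_{\omega_{n}}$ unnormalised, carry the factor $p_{\omega_{n}}^{-1}$ inside $G_{n}$, and control it with the reversibility identity $(Q^{n})_{ja}=\frac{p_{a}}{p_{j}}(P^{n})_{aj}$ --- which is why you can correctly note that the hypothesis $\mu_{i}(X)>0$ is dispensable for the estimate, while the paper's normalisation genuinely requires it.
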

\begin{proof}
Take $\widehat{\mu}\in\mathcal{M}_{1}(\widehat{X})$ 
as in the statement of the lemma
and for each $i$ define
the
probability measure $\overline{\mu}_{i}$ 
$$
\overline{\mu}_{i}(B)\eqdef \frac{\mu_{i}(B)}{\mu_{i}(X)},
\quad \mbox{where $B$ is a Borel subset of $X$}.
$$
A straightforward calculation and the previous definition imply that  
\[
\begin{split}
 (\mathfrak{S}^{n}({\widehat{\mu}}))_{j}&=\sum_{\xi_1,\dots ,\xi_{n}}
 p_{\xi_{n}\xi_{n-1}}\dots p_{\xi_{2}\xi_{1}}p_{\xi_{1}j} \, T_{j*}T_{\xi_{1}*}
 \dots_{*}T_{\xi_{n-1}*}\mu_{\xi_{n}}\\    
&=
 \sum_{\xi_1,\dots ,\xi_{n}}
 \mu_{\xi_{n}}(X)\, p_{\xi_{n}\xi_{n-1}}\dots p_{\xi_{2}\xi_{1}}p_{\xi_{1}j}
  T_{j*}T_{\xi_{1}*}\dots_{*}T_{\xi_{n-1}*}\overline{\mu}_{\xi_{n}}. 
 \end{split}
 \]
  Thus given any $g\in C^{0}(X)$ we have that
 \[ \int g\, d(\mathfrak{S}^{n}({\widehat{\mu}}))_{j}= \sum_{\xi_1,\dots ,\xi_{n}}
 \mu_{\xi_{n}}(X)\, p_{\xi_{n}\xi_{n-1}}\dots 
 p_{\xi_{1}j}
  \int g \,d T_{j*}T_{\xi_{1}*}\dots_{*}T_{\xi_{n-1}*}\overline{\mu}_{\xi_{n}}.
\]
%
  
  Let 
  $$
  L_{n}\eqdef\left|\int g \, d (\mathfrak{S}^{n}({\widehat{\mu}}))_{j}
-\int_{[j]} (g\circ\pi)\, d\,\mathbb{P}^{-} \right|
$$
 and write  $ \mu_{\xi_{n}}(X)=
 (\mu_{\xi_{n}}(X)-p_{\xi_{n}})+ p_{\xi_{n}}$. Then

\[
\begin{split}
L_{n}&\leq \left| \sum_{\xi_1,\dots ,\xi_{n}}
 (\mu_{\xi_{n}}(X)-p_{\xi_{n}})\, p_{\xi_{n}\xi_{n-1}}\dots 
 p_{\xi_{1}j}
  \int g \,d T_{j*}T_{\xi_{1}*}\dots_{*}T_{\xi_{n-1}*}\overline{\mu}_{\xi_{n}}\right|\\
  & + \left|\sum_{\xi_1,\dots ,\xi_{n}}
 p_{\xi_{n}}\, p_{\xi_{n}\xi_{n-1}}\dots 
 p_{\xi_{1}j}
  \int g \,d T_{j*}T_{\xi_{1}*}\dots_{*}T_{\xi_{n-1}*}\overline{\mu}_{\xi_{n}}-
  \int_{[j]} (g\circ\pi)\, d\,\mathbb{P}^{-}\right|\\
  &\leq \max_{i}|\mu_{i}(X)-p_i|
\Vert g\Vert\sum_{\xi_1,\dots ,\xi_{n}}
 p_{\xi_{n}\xi_{n-1}}\dots 
 p_{\xi_{1}j}\\
 & +
  \left|\sum_{\xi_1,\dots ,\xi_{n}}
 p_{\xi_{n}}\, p_{\xi_{n}\xi_{n-1}}\dots 
 p_{\xi_{1}j}
  \int g \,d T_{j*}T_{\xi_{1}*}\dots_{*}T_{\xi_{n-1}*}\overline{\mu}_{\xi_{n}}-
  \int_{[j]} (g\circ\pi)\, d\,\mathbb{P}^{-}\right|.\\
\end{split}
\]
Note that $\sum_{\xi_1,\dots ,\xi_{n-1}}
 p_{\xi_{n}\xi_{n-1}}\dots
 p_{\xi_{1}j}$ is the entry $(\xi_{n},j)$ of the matrix $P^{n}$. Hence
 $$
 \sum_{\xi_1,\dots ,\xi_{n}}
 p_{\xi_{n}\xi_{n-1}}\dots 
 p_{\xi_{1}j}=\sum_{\xi_{n}=1}^k \, \sum_{\xi_1,\dots ,\xi_{n-1}}
 p_{\xi_{n}\xi_{n-1}}\dots
 p_{\xi_{1}j}\leq k.
  $$
  Therefore
  \begin{equation} \label{e.sumsum}
   \max_{i}|\mu_{i}(X)-p_i|
\Vert g\Vert\sum_{\xi_1,\dots ,\xi_{n}}
 p_{\xi_{n}\xi_{n-1}}\dots 
 p_{\xi_{1}j} \le 
 k\, \Vert g\Vert\, \max_{i}|\mu_{i}(X)-p_i|.
  \end{equation}
 
 We now estimate the second parcel in the sum above. 
 \begin{claim}\label{c.segunda25} For every continuous function $g$ it holds
 $$
 \lim_{n\to \infty}\sum_{\xi_1,\dots ,\xi_{n}}
 p_{\xi_{n}}p_{\xi_{n}\xi_{n-1}}\dots 
 p_{\xi_{1}j}
  \int g \,d T_{j*}T_{\xi_{1}*}\dots_{*}T_{\xi_{n-1}*}\overline{\mu}_{\xi_{n}}
  =\int_{[j]} (g\circ\pi)\, d\,\mathbb{P}^{-}.$$
 \end{claim}
 
Observe that equation \eqref{e.sumsum} and the claim imply the
 lemma.

 \begin{proof}[Proof of Claim \ref{c.segunda25}]
 Consider the sequence of functions 
given by
$$
G_{n}:\Sigma_{k}^+\rightarrow \mathbb{R}, \quad 
G_{n}(\xi) \eqdef\displaystyle\int g\,d T_{\xi_{0}*}T_{\xi_{1}*}\dots_{*}T_{\xi_{n-1}*}\overline{\mu}_{\xi_{n}}.
$$
By definition,
for every $n$ the corresponding
 map
$G_{n}$ is constant in the cylinders $[\xi_{0},\ldots,\xi_{n}]$ and thus
it is  measurable. By definition of $\mathbb{P}^{\pm}$, for every $j$ we have that 
$$
p_{\xi_{n}}p_{\xi_{n}\xi_{n-1}}\dots p_{\xi_{2}\xi_{1}}p_{\xi_{1}j}
=\mathbb{P}^{+}([\xi_{n}\xi_{n-1}\dots\xi_{1}j])=\mathbb{P}^{-}([j\xi_{1}\xi_{2}\dots\xi_{n}]).
$$
Hence 
$$
 \sum_{\xi_1,\dots ,\xi_{n}}
 p_{\xi_{n}}p_{\xi_{n}\xi_{n-1}}\dots 
 p_{\xi_{1}j}
  \int g \,d T_{j*}T_{\xi_{1}*}\dots_{*}T_{\xi_{n-1}*}\overline{\mu}_{\xi_{n}}=\int_{[j]} G_{n}\,d\,\mathbb{P}^{-}.
$$
It follows from  the hypothesis $\mathbb{P}^{-}(S_{\mathrm{t}})=1$ and Lemma \ref{l.interessante} that 
\begin{equation}\label{e.limitFn2}
\lim_{n\rightarrow\infty} G_{n}(\xi)=g\circ \pi(\xi)
\quad 
\mbox{for  $\mathbb{P}^{-}$-almost every  $\xi$}.
\end{equation}

  Now note that $|G_{n} (\xi)|\leq \|g\|$ for every $\xi\in \Sigma_{k}^{+}$. 
From \eqref{e.limitFn2}, using the dominated convergence theorem, 
we get
$$
\lim_{n\rightarrow\infty}\int_{[j]} G_{n}\, d\,\mathbb{P}^{-}
=\int_{[j]} (g\circ\pi)\, d\,\mathbb{P}^{-},
$$
ending the proof of the claim.
 \end{proof}
%
%
The proof of the lemma is now complete.
 \end{proof}

 To prove the theorem observe that  since $\mathbb{P}^-$ is mixing the transition  matrix $P$ 
 associated to $\mathbb{P}^+$ 
 is primitive, recall Section~\ref{sss.inversemarkov}.
 Take $\widehat{\mu}=(\mu_1,\dots, \mu_k)\in \mathcal{M}_{1}(\widehat{X})$. 
 Note that by definition of the Markov operator
  $$
  \big( (\mathfrak{S}\widehat{\mu})_{1}(X),\dots,(\mathfrak{S}\widehat{\mu})_{k}(X)\big)=
 \widehat{p}\,P, \quad
 \mbox{ where $\widehat{p}=(\mu_{1}(X),\dots,\mu_{k}(X))$}.
 $$
  Hence for every $n\geq 1$
 \begin{equation}\label{e.perron}
  \big( (\mathfrak{S}^{n}\widehat{\mu})_{1}(X),\dots,(\mathfrak{S}^{n}\widehat{\mu})_{k}(X) \big)=
 \widehat{p}\,P^{n}.
  \end{equation}
 
By the Perron-Frobenius theorem, see for instance \cite[page 64]{Mane}, we have that $P$ 
the stationary vector $\bar p=(p_1,\dots, p_k)$ 
is positive\footnote{A vector
$v=(v_{1},\dots,v_{k})$ is said positive if $v_{i}>0$ for all $i$. } 
and
$$
\lim_{n\to \infty}\widehat{p}\,P^{n}=\bar{p}
\quad
\mbox{for every probability vector $\widehat{p}$}.
$$
Hence \eqref{e.perron}  gives 
$n_{0}$ such that 
the vector $\big((\mathfrak{S}^{n_{1}}\widehat{\mu})_{1}(X),\dots,(\mathfrak{S}^{n_{1}}\widehat{\mu})_{k}(X)\big)$
 is positive for every $n_{1}\geq n_{0}$. Therefore we can apply 
 Lemma \ref{l.primitiveee} to the measure $\mathfrak{S}^{n_{1}}({\widehat{\mu}})$ 
 for every $n_{1}\geq n_{0}$, 
 obtaining for every $g\in C^{0}(X)$ the inequality
$$
\limsup_{n}\left|\int g \, d (\mathfrak{S}^{n+n_{1}}({\widehat{\mu}}))_{j}
-\int_{[j]} (g\circ\pi)\, d\,\mathbb{P}^{-} \right|\leq k\, \Vert g\Vert\, \max_{i}
|(\mathfrak{S}^{n_{1}}\widehat{\mu})_{i}(X)-p_i|.
$$
 It follows from the definition of $\limsup$ and  
 the previous inequality  that 
$$
 \limsup_{n}\left|\int g \, d (\mathfrak{S}^{n}({\widehat{\mu}}))_{j}
-\int_{[j]} (g\circ\pi)\, d\,\mathbb{P}^{-} \right|\leq k
\, \Vert g\Vert \,
\max_{i}
|(\mathfrak{S}^{n_{1}}\widehat{\mu})_{i}(X)-p_i|
 $$
 for every $n_{1}\geq n_{0}$. By \eqref{e.perron} and 
 the Perron-Frobenius theorem we get
 $$\lim_{n_{1}\to \infty}\max_{i}
|(\mathfrak{S}^{n_{1}}\widehat{\mu})_{i}(X)-p_i| = 0.
$$  
Therefore 
 \begin{equation}\label{e.quasequase}
 \lim _{n\to \infty}\int g \, d (\mathfrak{S}^{n}({\widehat{\mu}}))_{j}
=\int_{[j]} (g\circ\pi)\, d\,\mathbb{P}^{-}
\quad
\mbox{for every $g\in C^{0}(X)$}.
 \end{equation}
 To get equation \eqref{e.atra},
  write  $\widehat{f}=\langle f_{1},\dots,f_{k} \rangle$,
 apply  \eqref{e.quasequase}
 to the maps $f_{i}$,  and use \eqref{e.3101}
 to get
 $$
 \lim_{n\to \infty}\int \widehat f\, d \,\mathfrak{S}^{n}({\hat{\nu}})
 \underset{\tiny{\mbox{\eqref{e.3101}}}}{=}
\sum_{j=1}^{k}\lim_{n\to \infty}  \int f_{j}
\,d \, (\mathfrak{S}^{n}({\hat{\nu}}))_{j}
 \underset{\tiny{\mbox{\eqref{e.quasequase}}}}{=}
\sum_{j=1}^{k}\int_{[j]} (f_{j}\circ\pi)\,d\,\mathbb{P}^{-}.
$$
Now observing that $f_{j}\circ\pi(\xi)=\widehat f\circ \varpi (\xi)$ for every $\xi\in[j]$,
we conclude that 
$$
 \lim_{n\to \infty}\int \widehat f\, d \,\mathfrak{S}^{n}({\hat{\nu}})
 =\sum_{j=1}^{k}\int_{[j]} \widehat f\circ \varpi \, d\, \mathbb{P}^{-}=\int \widehat f\, 
 d\varpi_{*}\mathbb{P}^{-}.
 $$
 proving  \eqref{e.atra} and ending the proof of the theorem.
\end{proof}

In Proposition~\ref{p.p.recurrent}
we state a result that does not involve the mixing condition of  the probability $\mathbb{P}^-$.
For that we consider the subset  $\mathcal{M}_{\bar p}(\widehat{X})$ of
$\mathcal{M}_{1}(\widehat{X})$ defined by
$$
\mathcal{M}_{\bar p}(\widehat{X})\eqdef \{\widehat{\mu}=
(\mu_{1},\dots,\mu_{k})\colon \mu_{i}(X)=p_{i}\,\,\mbox{for every} \,\, i\},
$$
where $\bar p=(p_1,\dots, p_k)$ is the stationary vector of the irreducible transition matrix $P$ associated to
$\mathbb{P}^+$.
The set
$\mathcal{M}_{\bar p}(\widehat{X})$
 is invariant by $\mathfrak{S}_{\mathbb{P}^+}$
and
contains all stationary measures
of $\mathrm{IFS}(T_1,\dots,T_k; \mathbb{P}^+)$.
For the first assertion observe that
  given any $\widehat{\mu}\in  \mathcal{M}_{\bar p} (\widehat X)$
 by  the definition of  $\mathfrak{S}_{\mathbb{P}^{+}}$ we have
 $$
 (\mathfrak{S}_{\mathbb{P}^{+}}\widehat\mu)_j=\sum_{i=1}^{k}p_{ij}\, T_{j*}\mu_{i}
 \quad \mbox{for every $j$.}
 $$ 
 Thus 
 $$
 (\mathfrak{S}_{\mathbb{P}^{+}}\widehat\mu)_j(X)=\sum_{i=1}^{k}p_{ij}  \mu_{i}(T_{j}^{-1}(X))=\sum_{i=1}^{k}p_{ij}  \mu_{i}(X)=\sum_{i=1}^{k}p_{i}p_{ij} =p_{j}
 $$
 and hence $\mathfrak{S}_{\mathbb{P}^{+}} (\widehat{\mu})
 \in  \mathcal{M}_{\bar p} (\widehat X)$.

For the second assertion note that a measure  $\widehat{\mu}=(\mu_{1},\dots,\mu_{k})\in  \mathcal{M}_{1}(\widehat{X})$
is stationary if and only if
$$
\mu_{j}=\sum_{i=1}^{k}p_{ij}\, T_{j*}\mu_{i} \quad \mbox{for every} \,\, j.
$$
If $\widehat{\mu}=
(\mu_1,\dots,\mu_k)$  is stationary  then
$(\mu_{1}(X),\dots,\mu_{k}(X))$ is the stationary probability vector 
for the transition matrix $P$ of $\mathbb{P}^+$.  
Thus $\mu_{i}(X)=p_{i}$ for every $i$.
 
A corollary of Lemma~\ref{l.primitiveee} is the following proposition.  
 
\begin{prop}\label{p.p.recurrent}
Consider a recurrent  $\mathrm{IFS}(T_{1},\dots T_{k};\mathbb{P}^{+})$ defined on a compact 
metric space $X$ such that $\mathbb{P}^{-}(S_{\mathrm{t}})=1$. Then 
$$
\lim_{n\to \infty}\mathfrak{S}^{n}(\widehat{\nu})=\varpi_{*}\mathbb{P}^{-}
\quad
\mbox{for every
$\widehat{\nu}\in \mathcal{M}_{\bar p}(\widehat{X})$.}
$$
In particular, $\varpi_{*}\mathbb{P}^{-}$ is the unique stationary measure of
$\mathfrak{S}$.
\end{prop}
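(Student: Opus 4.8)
The plan is to obtain the proposition as a direct corollary of Lemma~\ref{l.primitiveee}, the point being that restricting to $\widehat\nu\in\mathcal{M}_{\bar p}(\widehat X)$ forces the error term in that lemma to vanish, so that no mixing hypothesis is needed. First I would record the positivity of $\bar p$: since the transition matrix $P$ associated to $\mathbb{P}^+$ is irreducible, its stationary probability vector $\bar p=(p_1,\dots,p_k)$ is strictly positive. Consequently any $\widehat\nu=(\nu_1,\dots,\nu_k)\in\mathcal{M}_{\bar p}(\widehat X)$ satisfies $\nu_i(X)=p_i>0$ for every $i$, which is exactly the positivity hypothesis required to apply Lemma~\ref{l.primitiveee} to $\widehat\nu$ itself.

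Applying that lemma, for every $g\in C^0(X)$ and every index $j$ I would get
\[
\limsup_n\left|\int g\, d(\mathfrak{S}^n(\widehat\nu))_j-\int_{[j]}(g\circ\pi)\, d\mathbb{P}^-\right|\le k\,\Vert g\Vert\,\max_i|\nu_i(X)-p_i|=0,
\]
because $\nu_i(X)=p_i$ for all $i$. Hence the limit exists and
\[
\lim_{n\to\infty}\int g\, d(\mathfrak{S}^n(\widehat\nu))_j=\int_{[j]}(g\circ\pi)\, d\mathbb{P}^-\qquad\text{for every }g\in C^0(X)\text{ and every }j.
\]
This is precisely equation \eqref{e.quasequase} from the proof of Theorem~\ref{t.p.t.recurrent}; the difference is that there the relation $\widehat p\,P^n\to\bar p$ (valid only in the mixing/primitive case) was used to drive a general initial measure into $\mathcal{M}_{\bar p}(\widehat X)$, whereas here we begin already inside that set, so mixing is not needed.

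From this displayed limit I would conclude the weak$\ast$ convergence exactly as in the closing lines of the proof of Theorem~\ref{t.p.t.recurrent}: given an arbitrary $\widehat f=\langle f_1,\dots,f_k\rangle\in C^0(\widehat X)$, decompose the integral through its sections using \eqref{e.3101}, apply the limit above to each $f_j$, and use $f_j\circ\pi(\xi)=\widehat f\circ\varpi(\xi)$ for $\xi\in[j]$ to identify the sum as $\int\widehat f\, d\varpi_*\mathbb{P}^-$. This yields $\lim_{n\to\infty}\mathfrak{S}^n(\widehat\nu)=\varpi_*\mathbb{P}^-$ for every $\widehat\nu\in\mathcal{M}_{\bar p}(\widehat X)$.

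Finally, for the uniqueness statement I would argue that, by continuity of $\mathfrak{S}$, the common limit $\varpi_*\mathbb{P}^-$ is a fixed point and hence a stationary measure; conversely, every stationary measure lies in $\mathcal{M}_{\bar p}(\widehat X)$ (as noted just before the statement), so it equals all of its own $\mathfrak{S}$-iterates and therefore coincides with $\varpi_*\mathbb{P}^-$. I expect no real obstacle here: the substantive work is already contained in Lemma~\ref{l.primitiveee}, and the only genuine checkpoint is the strict positivity of $\bar p$, which is what legitimises applying that lemma on $\mathcal{M}_{\bar p}(\widehat X)$.
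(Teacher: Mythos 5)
Your proposal is correct and follows essentially the same route as the paper: apply Lemma~\ref{l.primitiveee} directly to $\widehat\nu\in\mathcal{M}_{\bar p}(\widehat X)$ so that the error bound $k\,\Vert g\Vert\,\max_i|\nu_i(X)-p_i|$ vanishes, then pass to arbitrary $\widehat f$ via the section decomposition \eqref{e.3101} and the identity $f_j\circ\pi=\widehat f\circ\varpi$ on $[j]$. Your explicit check that irreducibility forces $\bar p$ to be strictly positive (so the positivity hypothesis of Lemma~\ref{l.primitiveee} holds) and your spelled-out uniqueness argument are points the paper leaves implicit, but the substance is identical.
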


\begin{proof}
Consider $\widehat{\mu}=(\mu_1,\dots, \mu_k)\in \mathcal{M}_{\bar p}(\widehat{X})$ and note that 
 $\mu_i(X)=p_i$.
  Lemma
 \ref{l.primitiveee} implies that for every continuous function $g$ it holds 
 \begin{equation} \label{e.quasenoend}
 \lim_{n\to \infty} \int g \, d (\mathfrak{S}^{n}({\widehat{\mu}}))_{j}
=\int_{[j]} (g\circ\pi)\, d\,\mathbb{P}^{-}.
\end{equation}

Consider a continuous map
$\widehat{f}=\langle f_{1},\dots,f_{k} \rangle$.
 We apply \eqref{e.quasenoend} to the maps $f_{i}$ and use equation \eqref{e.3101}
 to get
 $$
 \lim_{n\to \infty}\int \widehat f\, d \,\mathfrak{S}^{n}({\hat{\nu}})
 \underset{\tiny{\mbox{\eqref{e.3101}}}}{=}
\sum_{j=1}^{k}\lim_{n\to \infty}  \int f_{j}
\,d \, (\mathfrak{S}^{n}({\hat{\nu}}))_{j}
 \underset{\tiny{\mbox{\eqref{e.quasenoend}}}}{=}
\sum_{j=1}^{k}\int_{[j]} (f_{j}\circ\pi)\,d\,\mathbb{P}^{-}.
$$
Observing that $f_{j}\circ\pi(\xi)=\widehat f\circ \varpi (\xi)$ for every $\xi\in[j]$,
we conclude that 
$$
 \lim_{n\to \infty}\int \widehat f\, d \,\mathfrak{S}^{n}({\hat{\nu}})
 =\sum_{j=1}^{k}\int_{[j]} \widehat f\circ \varpi \, d\, \mathbb{P}^{-}=\int \widehat f\, 
 d\varpi_{*}\mathbb{P}^{-},
 $$
 proving the proposition.
\end{proof}

\section{Examples}
\label{s.examples}


\begin{example}[A non-regular IFS with $S_{\mathrm{t}}\neq \emptyset$
and $\# (A_{\mathrm{t}})\ge 2$]
\label{ex.nonregular}
{\emph{
Consider an IFS defined on $[0,1]$ consisting
of two injective continuous maps $T_{1}$ and $T_{2}$ as in Figure \ref{f.nonregular}.
\begin{itemize}
\item
The map $T_{1}$ has  exactly
two fixed points $0,1$, where  $0$ is a repeller 
and $1$ is an attractor.
\item
The map $T_2$ 
has (exactly three) fixed points $p_{1}<p_{2}<p_{3}$,
where $p_1$ and $p_3$ are attractors and $p_2$ is a  repeller,
$T_{2}([0,1])=[\alpha,\beta]\subset (0,1)$, and
$T_1(p_1)< \beta$.
\end{itemize}
}}
 
{\emph{
Obviously, $\mathrm{IFS}(T_1)$
and $\mathrm{IFS}(T_2)$ 
are not asymptotically stable. 
To see that 
$\mathrm{IFS}(T_{1},T_{2})$ is not asymptotically stable 
just note that $[0,1]$ and $[p_{1},1]$ are fixed points of the Barnsley-Hutchinson operator. For the last assertion we use that $T_1(p_1)< \beta$. This implies that 
$\mathrm{IFS}(T_1,T_2)$ is non-regular.}}

\emph{Finally, to see that $S_{\mathrm{t}}\neq \emptyset$ note that  since 
$1$ is an attracting fixed point of $T_1$ and $T_{2}([0,1])\subset(0,1)$  we have that $T_{1}^{n}\circ T_{2}([0,1])\cap T_{2}([0,1])=\emptyset$
 for every $n$ sufficiently large. 
Now  Theorem \ref{t.separableee} implies that $S_{\mathrm{t}}\neq \emptyset$.
To see that  $\# (A_{\mathrm{t}})\ge 2$ just note that given any $x\in  A_{\mathrm{t}}$
then $T_i(x) \in  A_{\mathrm{t}}$ and that $T_1(x)\ne T_2(x)$}.
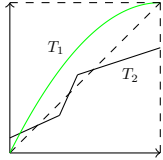
\begin{figure}[h!]
\centering
\begin{tikzpicture}[xscale=2,yscale=2]
\draw[->] (0,0)-- (1,0);
\draw[->] (0,0)--(0,1);
\draw[-]  (0,0.1)--(0.33,0.25);
\draw[-]    (0.33,0.25)--(0.45,0.52);
\draw[-] (0.45,0.52)--(1,0.7);
\draw[dashed] (0,0)--(1,1);
  \draw[green,smooth,samples=100,domain=0.0:1] plot(\x,{-\x*\x+2*\x});
 \node[scale=0.6,left] at (0.4,0.7) {$T_{1}$};
  \node[scale=0.6,below] at (0.8,0.6) {$T_{2}$};
\draw[dashed] (0,1)--(1,1);
\draw[dashed] (1,0)--(1,1);
\end{tikzpicture}
\caption{A non-regular IFS with a weakly hyperbolic sequence}
\label{f.nonregular}
\end{figure}
\end{example}

\begin{example}[{$A_{\mathrm{t}}\subsetneq \overline{A_{\mathrm{t}}}=[0,1]$}]
{\emph{In this example we consider 
 the underlying IFS of the porcupine-like
horseshoes in \cite{DiGe}. We translate the construction in \cite[page 12]{DiGeRams} to
our context.}}

{\emph{
Consider an injective $\mathrm{IFS}(T_{1},T_{2})$ defined on $[0,1]$ such that 
$T_{1}(x)=\lambda \, (1-x)$, $\lambda\in (0,1)$, and $T_{2}$ is a continuous function with exactly two fixed points,
the repelling fixed point $0$
 and the attracting 
fixed point $1$, see Figure \ref{Porcupine}.
 We assume that $T_{2}$ is a uniform contraction 
 on $[T_{2}^{-1}(\lambda),1]$. Then 
 $\overline{A_{\mathrm{t}}}=[0,1]$ and
$1\notin A_{\mathrm{t}}$.}}

{\emph{
  To prove the first assertion note that $\lambda \in \overline{A_{\mathrm{t}}}$. 
  For that
 take an open neighbourhood $V\subset (0,1)$ of $\lambda$. Note that $T_{1}^{-1}(V)$ is a neighbourhood 
 of $0$. 
 Consider the fixed point
  $p=\frac{\lambda}{1+\lambda}\in (0,1)$ 
 of $T_{1}$ and  note that $p\in A_{\mathrm{t}}$. 
 Since $T_{2}^{n}(p)\to 1$ as $n\to \infty$ and $T_{1}(1)=0$, there is $\ell$ such that 
 $T_{1}\circ T_{2}^{\ell}(p)\in T_{1}^{-1}(V)$. Hence $T_{1}^{2}\circ T_{2}^{\ell}(p)\in V$.
 By the invariance of $A_{\mathrm{t}}$ we have that $A_{\mathrm{t}}\cap V\neq \emptyset$.
 Since this holds for every neighbourhood  $V$ of $\lambda$ we get $\lambda \in \overline{A_{\mathrm{t}}}$.
 }}

{\emph{
 We now  prove that $A_{\mathrm{t}}$ is dense in $[0,1]$. Take any open interval $J\subset (0,1)$. We need to  see that 
 $J\cap A_{\mathrm{t}}\neq \emptyset$. If $\lambda\in J$ we are done. Otherwise $\lambda\notin J$
 and
 either $J\subset (\lambda,1]=I_{2}$ or $J\subset [0,\lambda)=I_{1}$. 
We now construct a finite sequence $\xi_{0}\dots \xi_{m}$ such 
that 
$$
\lambda \in T_{\xi_{m}}^{-1}\circ\dots\circ T_{\xi_{0}}^{-1}(J).
$$
For that let $\xi_{0}=i$ if $J\subset I_{i}$ and define recursively 
$\xi_{\ell+1}=i$ if $T_{\xi_{\ell}}^{-1}\circ\dots\circ T_{\xi_{0}}^{-1}(J)\subset I_{i}$.
Note that if 
$T_{\xi_{\ell}}^{-1}\circ\dots\circ T_{\xi_{0}}^{-1}(J)\cap I_{i} \ne\emptyset$ 
and $T_{\xi_{\ell}}^{-1}\circ\dots\circ T_{\xi_{0}}^{-1}(J)\cap I_{i} \nsubseteq I_i$ 
 some  $i=1,2$, we are done.
Since $T_{2}^{-1}$ is a uniform expansion on $(\lambda,1]$ and $T_{1}^{-1}$ is a uniform 
expansion on $[0,\lambda]$ the recursion stops after a finitely many steps:
there is $m$ such that $\lambda\in T_{\xi_{m}}^{-1}\circ\dots\circ T_{\xi_{0}}^{-1}(J)$.
Since $\lambda \in \overline{A_{t}}\cap T_{\xi_{m}}^{-1}\circ\dots\circ T_{\xi_{0}}^{-1}(J)$,
the invariance of $A_{\mathrm{t}}$ implies that $J\cap A_{\mathrm{t}}\neq \emptyset$.}}

\emph{The fact that $1\notin A_{\mathrm{t}}$ follows observing that 
$\bar 2 \not\in S_{\mathrm{t}}$ and that
every finite 
sequence $\xi_{0}\dots \xi_{n}$ such that $\xi_{i}=1$ for some $i$ 
satisfies $1\notin T_{\xi_{0}}\circ\dots\circ T_{\xi_{n}}([0,1])$}.

\begin{figure}[h!]
\centering
\begin{tikzpicture}[xscale=2,yscale=2]
\draw[->] (0,0)-- (1,0);
\draw[->] (0,0)--(0,1);
\draw[red,-] (0,0.9)--(1,0);
\draw[dashed] (0,0)--(1,1);
  \draw[green,smooth,samples=100,domain=0.0:1] plot(\x,{-\x*\x+2*\x});
 \node[scale=0.6,left] at (0.6,0.9) {$T_{2}$};
  \node[scale=0.6,below] at (0.8,0.4) {$T_{1}$};
\draw[dashed] (0,1)--(1,1);
\draw[dashed] (1,0)--(1,1);
\end{tikzpicture}
\caption{The underlying IFS of a porcupine-like horseshoe}
\label{Porcupine}
\end{figure}
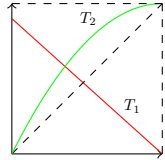
\end{example}

\begin{example}[A non-weakly hyperbolic IFS in
{$[0,1]$} with {$A_{\mathrm{t}}=[0,1]$}]
{\emph{We consider 
 the underlying IFS of the bony attractors in \cite{Ku}.}}

{\emph{Consider the $\mathrm{IFS}(T_{1},T_{2})$ defined on $[0,1]$ as follows,
 $T_{1}$ is the piecewise-linear map
with ``vertices'' $(0, 0)$, $(0.6, 0.2)$, and $(1, 0.8)$
and $T_{2}$ is the piecewise-linear map with ``vertices''
$(0, 0.15)$, $(0.4, 0.8)$, and $(1, 1)$, see Figure~\ref{bony}.
We claim that the
$\mathrm{IFS}(T_{1},T_{2})$ is not weakly hyperbolic and
$A_{\mathrm{t}}=[0,1]$.
}}

\begin{figure}[h!]
\centering
\begin{tikzpicture}[xscale=2,yscale=2]
 \node[scale=0.6,left] at (0.3,0.75) {$T_{2}$};
  \node[scale=0.6,below] at (0.7,0.6) {$T_{1}$};
\draw[->] (0,0)-- (1,0);
\draw[->] (0,0)--(0,1);
\draw[red,-]  (0,0)--(0.6,0.2);
\draw[red,-]  (0.6,0.2)--(1,0.75);
\draw[blue,-]  (0,0.15)--(0.4,0.8);
\draw[blue,-]  (0.4,0.8)--(1,1);
\draw[dashed] (0,1)--(1,1);
\draw[dashed] (0,0)--(1,1);
\draw[dashed] (1,0)--(1,1);
\end{tikzpicture}
\caption{The underlying IFS of a bony attractor}
\label{bony}
\end{figure}
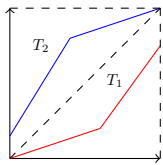

\emph{
To prove the first assertion note that $T_{1}\circ T_{2}$ has a repelling fixed point,
see \cite{Ku}.
Therefore the periodic sequence $\overline{12}$ does not belong to $S_{\mathrm{t}}$, 
hence the IFS is not weakly hyperbolic.}

\emph{
To see the second assertion, note that the compositions $T_{1}^3$, $T_{1}^{2}\circ T_{2}$, $T_{2}^{2}\circ T_{1}$ and $T_{2}^{5}$ are uniform contractions
and that the union of their images is $[0,1]$, see \cite{Ku}. In other words, 
the $\mathrm{IFS}(T_{1}^3, T_{1}^{2}\circ T_{2}, T_{2}^{2}\circ T_{1},T_{2}^{5})$ is hyperbolic 
and $[0,1]$ is the unique fixed point of its Barnsley-Hutchinson operator.
Consider the finite set of words 
$$
W=\{111,112,221, 22222\}
$$ 
and 
 let $E_{W}$ be the subset of $\Sigma_{k}^{+}$ consisting 
 of sequences $\xi$ that are a concatenation 
 of words of $W$\footnote{There is an increasing sequence $(i_{\ell})_{\ell\in \mathbb{N}}$
 with  $\xi_{0}=0$ such that $\xi_{i_{\ell}}\dots\xi_{i_{\ell+1}-1}\in W$ for every $\ell\in \mathbb{N}$.}. 
 Let $S_{\mathrm{t}}$ be the set of weakly hyperbolic sequences
 corresponding to the $\mathrm{IFS}(T_{1},T_{2})$ and
 $\pi$ the associated coding map. By construction we have that $E_{W}\subset S_{t}$ and 
$\pi(E_{W})=[0,1]$. Since $A_{t}=\pi(S_{\mathrm{t}})$ we have that 
$A_{\mathrm{t}}=[0,1]$}.

\end{example}

%
%
%
%

%
%
%
%
%
%
%

%

%


\end{document}